\newcommand{\colim}{\displaystyle{\lim_{\longrightarrow}}}
\def\f2{{\mathbb F}_{2}}
\def\z2{{\mathbb Z}/2}
\def\fp{{\mathbb F}_{p}}
\DeclareMathOperator{\Hom}{Hom}
\newcommand{\V}{\mathbb{V}}
\newtheorem{thm}{Theorem}
\newtheorem{lem}[thm]{Lemma}
\newtheorem{pro}[thm]{Proposition}
\newtheorem{cor}[thm]{Corollary}
\newtheorem{rem}[thm]{Remark}
\numberwithin{thm}{section}
\numberwithin{equation}{section}
\begin{document}
\title[Modular coinvariants and the mod $p$ homology of $QS^k$]{Modular coinvariants and the mod $p$ homology of $QS^k$}
\thanks{This research is funded by Vietnam National Foundation for Science and Technology Development (NAFOSTED) under grant number 101.04-2014.38.}
\author{Phan Ho{\`a}ng Ch\horn{o}n}
\subjclass{55P47, 55S12 (Primary),  55S10, 20C20 (Secondary)}
\address{Department of Mathematics and Application, Saigon University,
 273 An Duong Vuong, District 5, Ho Chi Minh city, Vietnam.}
 \email{chonkh@gmail.com}

\maketitle
\begin{abstract}
We use modular invariant theory to establish a complete set of relations of the mod $p$ homology of $\{QS^k\}_{k\geq0}$, for $p$ odd, as a ring object in the category of coalgebras (also known as a coalgebraic ring or a Hopf ring). We also describe the action of the mod $p$ Dyer-Lashof algebra as well as the mod $p$ Steenrod algebra on the coalgebraic ring.
\end{abstract}

\section{Introduction}
Let $G^*(-)$ denote an unreduced multiplicative cohomology theory. Then, $G^*(-)$ can be represented unstably by a family of infinite loop spaces $G_n$ of its associated $\Omega$-spectrum; that is, $\Omega G_{n+1}\simeq G_n$ for each $n \in \mathbb{Z}$, and there is a natural isomorphism $G^n(X)\cong[X,G_n]$, where we denote by $[X,Y]$ the homotopy classes of unbased maps from $X$ to $Y$( see \cite{Brown1962} for example). The collection of these spaces $G_*=\{G_k\}_{k\in \mathbb{Z}}$ is considered as a graded ring space with the loop sum
$
m \colon G_n\times G_n\rightarrow G_n
$
and the composition product
$
\mu \colon G_k\times G_\ell\rightarrow G_{k+\ell}.
$
Therefore, the (ordinary) homology of $\{G_k\}_{k\in\mathbb{Z}}$, beside the usual addition and coproduct, possesses two extra operations, induced by $m$ and $\mu$, denoted by $\star$ and $\circ$, respectively. These operations make the homology of $\{G_k\}_{k\in\mathbb{Z}}$ a ring object in the category of graded coalgebras, which is also called a Hopf ring or coalgebraic ring in the literature.  

Hopf ring has become an important structural organization tool in the study homology of $\Omega$-spectrum as well as the unreduced generalized multiplicative cohomology theory. For example, the Hopf ring for complex cobordism $MU$ is studied by Ravenel-Wilson \cite{Ravenel.Wilson1977}, the Hopf ring for Morava $K$-theory is studied by Wilson \cite{Wilson1984} and for connective Morava $K$-theory by Kramer\cite{Kramer1991},  Boardman-Kramer-Wilson \cite{Boardman.et.al.1999}. Recently, the Hopf ring structure for $BP$ and $KO, KU$ are respectively investigated by Kashiwabara \cite{Kashiwabara1994},  Kashiwabara-Strickland-Turner \cite{Kashiwabara.et.al.1996} and Morton-Strickland \cite{Morton.Strickland2002}. We refer to the articles of Ravenel-Wilson \cite{Ravenel.Wilson1977},  Hunton-Turner \cite{Hun-Tur98} and the survey of Wilson \cite{wilson2000} (and the references therein) about the origin of Hopf ring and its importance in algebraic topology. 

Let $QS^k=\colim \; \Omega^n\Sigma^n S^k$ be the infinite loop space of the sphere $S^k$. The collection of spaces $\{QS^k\}_{k\geq0}$ forms an $\Omega$-spectrum, called the sphere spectrum, and  the mod $p$ (ordinary) homology   $\{H_* QS^k\}_{k\geq0}$ possesses a Hopf ring structure which is universal in the following sense. It is well known that all spectra are module spectra over the sphere spectrum,  the mod $p$ homology of any infinite loop space becomes an $H_*QS^0$-module or $\{H_*QS^k\}_{k\geq0}$-module object in the category of coalgebras (also called coalgebraic module in the literature).  
 
We will work over an arbitrary but fixed odd prime $p$. Denote by $A,R$ the mod $p$ Steenrod algebra and mod $p$ Dyer-Lashof algebra respectively. It is known, see  Kashiwabara \cite{Kashiwabara2010b} for example, that the mod $p$ homology of any infinite loop space is an $A$-$H_*QS^0$-coalgebraic module. Further, according to May \cite{Coh-Lad-May76},  it also has a so-called $A$-$R$-allowable Hopf algebra, that is, a Hopf algebra on which both the Steenrod algebra and the Dyer-Lashof algebra act and the two actions satisfy some compatibility conditions. Thus, understanding the coalgebraic ring structure of $H_*QS^0$ plays an important role in the study of the homology of infinite loop spaces as well as in the study of the category of $A$-$H_*QS^0$-coalgebraic modules, the category of $A$-$R$-allowable Hopf algebras, and their relationship.

The aim of this paper is to give a complete description of the  Hopf ring for $\{H_*QS^k\}_{k\geq 0}$. Such a description has been given for $p=2$ by Turner \cite{Tur97} and Eccles-Turner-Wilson \cite{Eccles1997}. For odd primes, some information about the Hopf ring generators were obtained by YanFei Li \cite{Li1996} in his unpublished thesis. Our approach follows closely the strategy used by Turner \cite{Tur97} at $p=2$, where the main technical point is a careful analysis of the Dyer-Lashof algebra in terms of modular (co)invariant theory.

Let $[1]\in H_0 QS^0$ denote the image of the non-base point generator of $H_0S^0$ under the homomorphism induced by the obvious inclusion $S^0\hookrightarrow QS^0$ and let $\sigma$ denote the image of the generator of $H_1S^1$ under the homomorphism induced by the inclusion $S^1\hookrightarrow QS^1$. It follows from work of Araki-Kudo \cite{AK56}, Dyer-Lashof \cite{Dye-Las62} and May \cite{Coh-Lad-May76} that the mod $p$ homology of $\{QS^k\}_{k\geq0}$ is generated as a Hopf ring by the elements $Q^i[1], i\geq0,$ $\sigma$ (for $p=2$) and by $Q^i[1],i\geq 0$, $\beta Q^i[1], i\geq1$, $\sigma$ (for $p$ odd), where $Q^i$ denote the Dyer-Lashof operations \cite{Dye-Las62}. This actually corresponds to the fact that the Quillen's approximation map of the symmetric group by elementary abelian subgroups is a monomorphism \cite{Quillen1971.ann}. However, for a long time, no one undertook to study the relations until the importance of the coalgebraic ring structure of $H_*QS^k$ is clearly made again from works of Hunton-Turner \cite{Hun-Tur98} and Kashiwabara \cite{Kashiwabara2000} (which develop the homological algebra for the category of modules over a Hopf ring). These works are maybe the main motivation for the study in \cite{Tur97} and \cite{Eccles1997}, which give a description of a complete set of relations as a Hopf ring of $H_*QS^k$ for $p=2$. Later, it was discovered in \cite{Kashiwabara1995} that  a complete set of relations can be obtained from the fact that the Quillen map for the symmetric groups is actually an isomorphism at the prime $2$ (see \cite{Guna-Lannes-Zarati1989}). At odd primes, the Quillen map is no longer an isomorphism \cite{Guna-Lannes-Zarati1989}, thereby making it difficult to generalize the results in \cite{Tur97,Eccles1997}. However, when one restricts on the Bockstein-nil submodules, the Quillen's map of the symmetric groups is also an isomorphism \cite{Ha.Lesh2004} and this fact essentially allows Kashiwabara \cite{Kashiwabara2012} to to generalize the results in \cite{Tur97,Eccles1997} for the Bockstein-nil homology of $H_*QS^k$.  
  
Let $R[n]$ denote the subspace of the Dyer-Lashof algebra spanned by all monomials of length $n$.  Let $\V_n$ denote an elementary abelian $p$-group of rank $n$. The symmetric group $\Sigma_{p^n}$ acts on $\V_n$ by permuting its $p^n$ elements. Since $\V_n$ also acts on itself by translations,  there is a canonical inclusion $\V_n \rightarrow \Sigma_{p^n}$. Let $\mathscr{B}[n]$ denote the image of the induced map in cohomology of this inclusion.  May \cite[II.3]{Coh-Lad-May76}, M\`ui \cite{Mui75} (see also Kechagias \cite{Kechagias1994}) proved that there is an isomorphism of algebras between the dual of $R[n]$ and $\mathscr{B}[n]$. This isomorphism is the key result needed to establish a complete set of relations for the Hopf ring $\{H_*QS^k\}_{k\geq0}$ for $p$ odd. 

However, one major difference from the case $p=2$ is that $\mathscr{B}[n]$ is no longer the full invariant ring of $H^* B\V_n$ under the action of the general linear group $GL_n$. To overcome this difficulty, we will work with homology instead, and proceed by first constructing a new basis for $\mathscr{B}[n]^*$. Using this new basis and combining with the fact that the induced in map in homology of the Kahn-Priddy transfer, $tr_*^{(n)}$, is multiplicative with respect to the circle product in $H_*QS^0$ and is $GL_n$-invariant, we obtain an analogous description of a complete set of relation of $\{H_*QS^k\}_{k\geq0}$ as a coalgebraic ring for odd primes. This fact again confirms the close  correspondence between the Hopf ring structure of $\{H_*QS^k\}_{k\geq0}$ and the Quillen map for the symmetric groups. 

We would like to point out that the results in \cite{Tur97,Eccles1997} as well as in \cite{Kashiwabara2012} can be deduced from our description of the Hopf ring for $\{H_*QS^k\}_{k\geq0}$ simply by letting $p=2$ or by killing the action of the Bockstein operation. Some relations in the Hopf ring, with suitable modification, are analogous to those avalable at $p=2$. For example, the relations \eqref{eq:relation E(s) 2}-\eqref{eq: relation E(s) 3} in Proposition \ref{pro:relation of E(s)} can be derived from the multiplicativity and the $GL_2$-invariant of $tr_*^{(2)}$ as in the case $p=2$. However, the relation \eqref{eq:relation about excess_sigma} is completely new because the Boskstein operation plays significant role. We explain this in more details in Remark \ref{rem:4.8}. 

Finally, to fully understand the structure of the Hopf ring $\{H_* QS^k \}_{k \geq 0}$,  we also provide formulas for the action of the Steenrod algebra and of the Dyer-Lashof algebra on the Hopf ring in question.  
 
 Before describing the structure of this article, we would like to point out that one possible direct application of our result is to provide another description of the mod $p$ homology of the finite symmetric groups. It has long been known to topologists that infinite loop spaces are built from symmetric groups: denote by $\Sigma_{\infty} = \colim \, \Sigma_n$ the direct limit of $\Sigma_n$ via natural inclusion $\Sigma_n \subset \Sigma_{n+1}$ as the permutations fixing $(n+1)$. The natural pairing $\Sigma_n \times \Sigma_m \rightarrow \Sigma_{m+n}$ yields a product in $H_*B\Sigma_{\infty}$. A theorem of fundamental importance in algebraic topology, due to Barratt-Priddy and Quillen \cite{Quillen1969b,Priddy1971,Barratt.Priddy1972}\footnote{According to Madsen and Milgram \cite[p. 49]{Madsen.Milgram1979}, Dyer-Lashof were the first to prove this result in an unpublished version of \cite{Dye-Las62}.}, states that there is a natural map 
$
\mathbb{Z} \times B\Sigma_{\infty}\to QS^0, 
$
inducing an isomorphism in homology (even integrally). Furthermore, it is additive with respect to the the addition in $\mathbb{Z}$ times the product in $H_* B\Sigma_{\infty}$ mentioned above and the loop sum product. Restricting to the component $Q_0 S^0$ of the constant loop, the homology isomorphism $B\Sigma_{\infty}\to Q_0S^0$ can be described explicitly (see for example \cite[p. 108]{Kahn.Priddy1978a}). In fact, a weight grading can be given on $H_* Q_0S^0$ so that those of weight at most $m$ are exactly the elements coming from $H_* B\Sigma_m$. From our point of view, 
$H_* B\Sigma_{p^n}$ is generated by those of the form
\[
E_{(\epsilon_1,i_1+b(I))}\circ\cdots\circ E_{(\epsilon_s,p^{s-1}(i_1+\cdots+i_s+b(I))-\Delta_s\epsilon_s)}\star[-p^s],
\]  
where $s \leq n$ (see Theorem \ref{thm:describe though dickson coninvariant}). The weight of this element, in the sense of Kahn and Priddy, is easily seen to be $p^s$. 
For general $m$, one uses its $p$-adic decomposition  to see that $H_* B\Sigma_m$ is generated by the $\star$-product of the elements of the form described above, having total weight at most $m$. The homology of $\Sigma_{p^n}$ is detected by the homology of the two subgroups $\V_n$ and $(\Sigma_{p^{n-1}})^p$. The relation \eqref{eq:relation about excess_sigma} occurs when a homology class is detected by both subgroups.

 The paper is divided into five sections. The first two sections are preliminaries. In Section \ref{sec:Preliminaries}, we review  the Dickson-M\`ui algebra; its relation with the image of the restriction from the cohomology of the symmetric group $\Sigma_{p^n}$ to the cohomology of the elementary abelian $p$-group of rank $n$, $\V_n$,  and with  the mod $p$ Dyer-Lashof algebra. In Section \ref{sec:Basis}, we construct new additive base for $(H^*B\V_n)^{GL_n}$, $(H_*B\V_n)_{GL_n}$, the image $\mathscr{B}[n]$ and the cokernel of the restriction map $H^*B\Sigma_{p^n} \rightarrow (H^*B\V_n)^{GL_n}$.  This calculation is of interest to topologists because May \cite{Coh-Lad-May76} have shown that the new basis of the dual of  $\mathscr{B}[n]$ can be  considered as an additive basis for the subspace $R[n]$ of the mod $p$ Dyer-Lashof algebra. The main result of this paper, which is a complete description of the Hopf ring for $\{H_*QS^k\}_{k\geq0}$ in terms of generators and relations is presented in Section \ref{sec: Hopf ring}. The final section is devoted for the desciption of the action of the Steenrod algebra and the Dyer-Lashof algebra on $\{H_*QS^k\}_{k\geq0}$. 
 
 Unless stated otherwise, we will be working over the prime order field $\fp$, where $p$ is an odd prime. 
\section{Preliminaries}\label{sec:Preliminaries}
In this section, we review some main points of the Dickson-M\`ui algebra and the image of the restriction from the cohomology of the symmetric group $\Sigma_{p^n}$ to the cohomology of the elementary abelian $p$-group of rank $n$. We also review some basic properties of the mod $p$ Dyer-Lashof algebra.
\subsection{Modular invariant}
Let $\V_n=(\mathbb{Z}/p\mathbb{Z})^n$ denote the rank $n$ elementary abelian $p$-group.
It is well-known that the mod $p$ cohomology of the classifying space $B\V_n$ is given by
\[
H^*B\V_n=E(e_1,\dots,e_n)\otimes \fp[x_1,\dots,x_n],
\]
where $(e_1,\dots,e_n)$ is a basis of $H^1B\V_n=\Hom(\V_n,\fp)$, $x_i=\beta(e_i)\in H^2B\V_n$ for $1\leq i\leq n$ with $\beta$ the homology Bockstein operation, $E(e_1,\dots,e_n)$ is the exterior algebra generated by $e_i$'s and $\fp[x_1,\dots,x_n]$ is the polynomial algebra generated by $x_i$'s.

Let $GL_n$ denote the general linear group $GL_n=GL(\V_n)$. The group $GL_n$ acts canonically on $\V_n$ and hence on $H^*B\V_n$ according to the following standard action
\[
(a_{ij})x_s=\sum_{i}a_{is}x_i,\quad (a_{ij})e_s=\sum_ia_{is} e_i, \quad (a_{ij}) \in GL_n. 
\]
The algebra of all invariants of $H^*B\V_n$ under the actions of $GL_n$ is computed by Dickson \cite{Dic11} at $p=2$ and M\`ui \cite{Mui75} for $p$ odd. 
We briefly summarize their results. For any $n$-tuple of non-negative integers $(r_1,\dots,r_n)$, put $[r_1,\dots,r_n]:=\det(x_i^{p^{r_j}})$, and define
\[
L_{n,i}:=[0,\dots,\hat{i},\dots,n]; \quad L_{n}:=L_{n,n};\quad q_{n,i}:=L_{n,i}/L_{n},
\]
for any $1 \leq i \leq n$. 
In particular, $q_{n,n}=1$ and by convention, set $q_{n,i}=0$ for $i<0$. The degree of $q_{n,i}$ is $2(p^n-p^i)$. Define
\[
V_n:=V_n(x_1,\dots,x_n):=\prod_{\lambda_j\in\fp}(\lambda_1x_1+\cdots+\lambda_{n-1}x_{n-1}+x_n).
\]

In fact, it can be shown that $V_n=L_{n}/L_{n-1}$ and $q_{n,i}$ can be inductively defined by the formula
\[
q_{n,i}=q_{n-1,i-1}^p+q_{n-1,i}V_n^{p-1}.
\] 

For non-negative integers $k, r_{k+1},\dots,r_n$, set
\[
[k;r_{k+1},\dots,r_n]:=\frac{1}{k!}
\left|
\begin{array}{ccc}
e_1&\cdots&e_n\\
\cdot&\cdots&\cdot\\
e_1&\cdots&e_n\\
x_1^{p^{r_{k+1}}}&\cdots&x_n^{p^{r_{k+1}}}\\
\cdot&\cdots&\cdot\\
x_1^{p^{r_{n}}}&\cdots&x_n^{p^{r_{n}}}
\end{array}
\right|.
\]
For $0\leq i_1<\cdots< i_k\leq n-1$, we define
\begin{align*}
M_{n;i_1,\dots,i_k}&:=[k;0,\dots, \hat{i}_1,\dots,\hat{i}_k,\dots,n-1],\\
R_{n;i_1,\dots,i_k}&:=M_{n;i_1,\dots,i_k}L_{n}^{p-2}.
\end{align*}

Note that $M_{n;i_1,\dots,i_k}$ is in degree $k+2((1+\cdots+p^{n-1})-(p^{i_1}+\cdots+p^{i_k}))$ and then the degree of $R_{n;i_1,\dots,i_k}$ is $k+2(p-1)(1+\cdots+p^{n-1})-2(p^{i_1}+\cdots+p^{i_k})$.

We put $P_n:=\fp[x_1,\dots,x_n]$. The subspace of all invariants of $H^*B\V_n$ under the action of $GL_n$ is given by the following theorem.

\begin{thm}[(Dickson \cite{Dic11}, M\`ui \cite{Mui75})] \label{thm:invariant of G}
\begin{enumerate}
\item The subspace of all invariants of $P_n$ under the action of $GL_{n}$ is given by
\[
D[n]:=P_n^{GL_n}=\fp[q_{n,0},\dots,q_{n,n-1}].
\]
\item As a $D[n]$-module, $(H^*B\V_n)^{GL_n}$ is free and has a basis consisting of $1$ and all elements of $\{R_{n;i_1,\dots,i_k}:1\leq k\leq n, 0\leq i_1<\cdots<i_k\leq n-1 \}$. In other words,
\begin{align*}
(H^*&B\V_n)^{GL_n}=
P_n^{GL_n}\oplus\bigoplus_{k=1}^n\bigoplus_{0\leq i_1<\cdots<i_k\leq n-1}R_{n;i_1,\dots,i_k}P_n^{GL_n}.
\end{align*}
\item The algebraic relations are given by
\begin{align*}
R_{n;i}^2&=0,\\
R_{n;i_1}\cdots R_{n;i_k}&=(-1)^{k(k-1)/2}R_{n;i_1,\dots,i_k}q_{n,0}^{k-1}
\end{align*}
for $0\leq i_1<\cdots <i_k<n$.
\end{enumerate}
\end{thm}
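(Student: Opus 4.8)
The plan is to follow the classical Dickson–Mùi strategy, establishing the three parts in order, with each part feeding into the next. For part (1), I would first observe that $D[n] := \fp[q_{n,0},\dots,q_{n,n-1}]$ is contained in $P_n^{GL_n}$, since each $q_{n,i} = L_{n,i}/L_n$ is a ratio of determinants that transform by the same character $\det$ under $GL_n$ (the Euler class $L_n$ being the classical ``top Dickson invariant'' $V_1 V_2 \cdots V_n$, up to sign). To get the reverse inclusion, the cleanest route is a dimension/degree count: I would show that $P_n$ is a free $D[n]$-module of rank $|GL_n| / (\text{something})$ — more precisely, exploit that $x_1, \dots, x_n$ satisfy the ``Dickson equation'' $\prod_{i}(T - \sum \text{(linear forms)}) = T^{p^n} - q_{n,n-1}T^{p^{n-1}} + \dots \pm q_{n,0}T$ whose coefficients are exactly the $q_{n,i}$, so that $P_n$ is integral over $D[n]$ of the expected rank, and then a Poincaré-series comparison (or the argument that the field of fractions of $D[n]$ has the right transcendence degree and index) forces $P_n^{GL_n} = D[n]$.

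For part (2), I would show the listed elements span $(H^*B\V_n)^{GL_n}$ over $D[n]$ and are $D[n]$-linearly independent. The exterior generators $e_1, \dots, e_n$ span a rank-$2^n$ exterior algebra over $P_n$, and $GL_n$ acts on the span of the $e_i$ by the same standard representation. The key computation is to identify $M_{n;i_1,\dots,i_k}$ (the ``Mùi invariant'', a $k$-fold mixed determinant involving $k$ rows of $e$'s and $n-k$ rows of Frobenius powers of $x$'s) as, up to the unit $L_n^{p-2}$, a $GL_n$-semi-invariant, so that $R_{n;i_1,\dots,i_k} = M_{n;i_1,\dots,i_k} L_n^{p-2}$ is genuinely invariant. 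Linear independence and the spanning statement then follow by writing any invariant as $\sum_{S} \omega_S f_S$ with $\omega_S$ a product of distinct $e_i$'s and $f_S \in P_n$, applying $GL_n$-averaging degree by degree in the exterior variables, and using part (1) on the polynomial coefficients. This is essentially Mùi's original argument and I would cite it, filling in only the semi-invariance check of the $M$'s.

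Part (3) is a direct computation. The relation $R_{n;i}^2 = 0$ is immediate: $M_{n;i}$ is linear in the $e$'s (contains exactly one $e$-row), so $M_{n;i}^2 = 0$ in the exterior algebra, hence $R_{n;i}^2 = M_{n;i}^2 L_n^{2(p-2)} = 0$. For the product formula $R_{n;i_1}\cdots R_{n;i_k} = (-1)^{k(k-1)/2} R_{n;i_1,\dots,i_k} q_{n,0}^{k-1}$, I would expand the product of the $k$ one-$e$-row determinants $M_{n;i_1} \cdots M_{n;i_k}$ via multilinearity of the determinant in its rows: collecting the $e$-rows gives the $k$-row mixed determinant $M_{n;i_1,\dots,i_k}$, the sign $(-1)^{k(k-1)/2}$ records the permutation sorting the $e$-rows to the top, and the leftover factor is a power of the pure $x$-determinant, which one recognizes (using $L_n^{p-2}$ bookkeeping and $q_{n,0} = L_{n,0}/L_n$, the product of all nonzero linear forms) as exactly $q_{n,0}^{k-1}$ after normalizing powers of $L_n$; this is a finite determinant identity in $\fp[x_1,\dots,x_n,e_1,\dots,e_n]$.

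The main obstacle I anticipate is part (1): proving $P_n^{GL_n}$ is no larger than $D[n]$ requires either the Dickson-equation integrality argument with an exact rank count, or a Poincaré-series identity $\prod_{i=0}^{n-1}(1 - t^{2(p^n - p^i)})^{-1}$ matching the Molien series of the $GL_n$-action on $P_n$ — both are standard but genuinely require care to pin down the rank of $P_n$ over $D[n]$ and to rule out extra invariants in low degrees. Everything after that is bookkeeping with determinants in the exterior-polynomial algebra $H^*B\V_n$, which I would present compactly and attribute to Dickson and Mùi.
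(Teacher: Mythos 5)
The paper gives no proof of this theorem at all: it is stated as a classical result and attributed to Dickson \cite{Dic11} and M\`ui \cite{Mui75}, so your overall plan (prove what is cheap, cite M\`ui for the heavy lifting) is consistent with the paper's treatment. Your part (1) is the standard argument and is fine: both $L_{n,i}$ and $L_n$ transform by the character $\det$, so the $q_{n,i}$ are invariant; the Dickson equation makes $P_n$ integral over $D[n]$ (in fact $P_n$ is free over $D[n]$ of rank exactly $\prod_{i=0}^{n-1}(p^n-p^i)=|GL_n|$), and the fraction-field degree count together with the integral closedness of the polynomial ring $D[n]$ forces $P_n^{GL_n}=D[n]$. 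Your part (3) is also sound: $M_{n;i}$, hence $R_{n;i}$, has odd degree and so squares to zero for $p$ odd, and since $L_{n,0}=L_n^p$ (Frobenius commutes with $\det$), one has $q_{n,0}=L_n^{p-1}$, so the product relation reduces to the determinant identity $M_{n;i_1}\cdots M_{n;i_k}=(-1)^{k(k-1)/2}M_{n;i_1,\dots,i_k}L_n^{k-1}$, which is exactly the polynomial identity M\`ui proves and which your multilinearity sketch is aiming at.

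The genuine gap is the mechanism you propose for part (2). Writing an invariant as $\sum_S\omega_S f_S$ with $\omega_S$ exterior monomials and then ``applying $GL_n$-averaging degree by degree in the exterior variables'' to invoke part (1) on the coefficients cannot work: for $n\geq 2$ the prime $p$ divides $|GL_n(\fp)|$, so there is no Reynolds (averaging) operator in characteristic $p$; and independently of that, the decomposition is not componentwise equivariant, because $g\cdot(\omega_S f_S)=(g\omega_S)(gf_S)$ and $g\omega_S$ is a linear combination of the various $\omega_T$, so invariance of the sum only says the $f_S$ form a system of relative covariants for the exterior-power representation --- the individual $f_S$ need not lie in $P_n^{GL_n}$, and part (1) does not apply to them. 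Only the splitting by exterior degree is preserved, which is far from enough. M\`ui's actual proof of the freeness statement in (2) is a genuinely harder induction passing through invariants of subgroups (translation/parabolic subgroups and $SL_n$, where $L_n$ and the $M_{n;i_1,\dots,i_k}$ occur as $\det$-semi-invariants). Since you state you would cite M\`ui for this part, the proposal is acceptable and matches what the paper does, but your description of that step is not a correct account of the argument and would fail if you had to carry it out yourself.
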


Part (i) of Theorem \ref{thm:invariant of G} can be viewed as a special case of the odd primary case. When $p=2$, it is well-known that $H^*B\V_n\cong \f2[e_1,\dots,e_n]$, therefore, the algebra of invariants $(H^*B\V_n)^{GL_n}$ can be deduced from part (ii) by halving the degree of elements. Here ``halving the degree of elements'' means the image under the map sending elements of even degree $2d$ to itself of degree $d$ and elements of odd degree to zero. Then, the relation (iii) becomes trivial.
\subsection{The Dyer-Lashof algebra}
Let us recall the construction of the Dyer-Lashof algebra. Let $\mathcal{F}$ be the free algebra generated by $\{f^i|i\geq 0\}$ and $\{\beta f^i|i>0\}$ over $\fp$, with $|f^i|=2i(p-1)$ and $|\beta f^i|=2i(p-1)-1$. Then $\mathcal{F}$ becomes a coalgebra equipped with coproduct $\psi:\mathcal{F}\rightarrow \mathcal{F}\otimes \mathcal{F}$ given by
\[
\psi (f^i)=\sum_{j=0}^i f^{i-j}\otimes f^j; \quad \psi (\beta f^{i+1})=\sum_{j=0}^i(\beta f^{i-j+1}\otimes f^j+f^{i-j}\otimes \beta f^{j+1}).
\]

In fact, $\mathcal{F}$ is a Hopf algebra with unit $\eta:\fp\rightarrow \mathcal{F}$ and augmentation $\varepsilon \colon \mathcal{F}\rightarrow \fp$ sending $f^0$ to $1$ and all other generators to zero.

A typical element of $\mathcal{F}$ has the form
\[
f^{\varepsilon,I}=\beta^{\epsilon_1}f^{i_1}\cdots\beta^{\epsilon_n}f^{i_n},
\]
where $(\varepsilon,I)=(\epsilon_1,i_1,\dots,\epsilon_n,i_n)$ with $\epsilon_j\in\{0,1\}$ and $i_j\geq \epsilon_j$ for  $1\leq j\leq n$. The degree of $f^{\varepsilon,I}$ is equal to $2(p-1)(i_1+\cdots+i_n)-(\epsilon_1+\cdots+\epsilon_n)$. Let $\ell(f^{\varepsilon,I})=n$ denote the length of $(\varepsilon,I)$ or $f^{\varepsilon,I}$ and let  the excess of $(\varepsilon,I)$ or $f^{\varepsilon,I}$ be denoted and defined by $exc(f^{\varepsilon,I})=2i_1-\epsilon_1-|f^{\varepsilon',I'}|$, where $(\varepsilon',I')=(\epsilon_2, i_2,\dots,\epsilon_n, i_n)$. In other words,
\begin{equation}\label{eq:excess of f}
\begin{split}
exc(f^{\varepsilon,I})&=2i_1-\epsilon_1-2(p-1)\sum_{k=2}^ni_j+\sum_{k=2}^n\epsilon_j\\
&=\sum_{k=1}^n2(i_{k}-pi_{k+1})-2\epsilon_1+b(I),
\end{split}
\end{equation}
where $b(I)=\sum_{k=1}^n\epsilon_k$ and $i_{n+1}=0$.
By convention, if $(\varepsilon,I)=\emptyset$, then its excess is $\infty$ and we omit $\epsilon_j$ if it is $0$. It is clear that the excess of $f^{\varepsilon,I}$ is non-negative if and only if that of $f^{\varepsilon_t,I_t}$ is non-negative for all $1\leq t\leq n$, where $(\varepsilon_t,I_t)=(\epsilon_t,i_t,\dots,\epsilon_n, i_n)$.

Let $T=\mathcal{F}/I_{exc}$, where $I_{exc}$ is the two-sided ideal of $\mathcal{F}$ generated by all elements of negative excess. Then $T$ inherits the structure of a Hopf algebra. Denote the image of $f^{\varepsilon,I}$ by $e^{\varepsilon,I}$. The degree, length, excess described above passes to $T$.

Let $I_{Adem}$ be the two-sided ideal of $T$ generated by Adem elements
\[
e^re^s-\sum_i(-1)^{r+i}\binom{(p-1)(i-s)-1}{pi-r}e^{r+s-i}e^i, r>ps;
\]
\begin{align*}
e^r\beta e^s-\sum_i(-1)^{r+i}&\binom{(p-1)(i-s)}{pi-r}\beta e^{r+s-i}e^i\\
+\sum_i&(-1)^{r+i}\binom{(p-1)(i-s)-1}{pi-r-1}e^{r+s-i}\beta e^i, r\geq ps.
\end{align*}
The quotient algebra $R=T/I_{Adem}$ is called the Dyer-Lashof algebra. We denote the image of $e^{\varepsilon,I}$ by $Q^{\varepsilon,I}$, then $Q^i$ and $\beta Q^i$ satisfy the Adem relations:
\begin{equation}\label{eq:adem relation 1}
Q^rQ^s=\sum_i(-1)^{r+i}\binom{(p-1)(i-s)-1}{pi-r}Q^{r+s-i}Q^i, r>ps;
\end{equation}
\begin{equation}\label{eq:adem relation 2}
\begin{split}
Q^r\beta Q^s=\sum_i&(-1)^{r+i}\binom{(p-1)(i-s)}{pi-r}\beta Q^{r+s-i}Q^i\\
-&\sum_i(-1)^{r+i}\binom{(p-1)(i-s)-1}{pi-r-1}Q^{r+s-i}\beta Q^i, r\geq ps.
\end{split}
\end{equation}

Let $P_*^r$ be the dual to the Steenrod cohomology operation $P^r$, then the Nishida relations hold:
\[
P_*^rQ^s=\sum_i(-1)^{r+i}\binom{(p-1)(s-r)}{r-pi}Q^{s-r+i}P_*^i;
\]
\begin{align*}
P_*^r\beta Q^s=\sum_i(-1)^{r+i}&\binom{(p-1)(s-r)-1}{r-pi}\beta Q^{s-r+i}P_*^i\\
&+\sum_i(-1)^{r+i}\binom{(p-1)(s-r)-1}{r-pi-1}Q^{s-r+i}P_*^i\beta.
\end{align*}

A monomials $Q^{\varepsilon,I}$ is called admissible if $(\varepsilon,I)$ is admissible (i.e. a string $(\varepsilon,I)=(\epsilon_1,i_1,\dots,\epsilon_n,i_n)$ is admissible if $pi_k-\epsilon_k\geq i_{k-1}$ for $2\leq k\leq n$).

Let $R[n]$ be the subspace of $R$ spanned by all monomials of length $n$. It has an additive basis consisting of all admissible monomials of length $n$ and non-negative excess. We will need a description of the dual $R[n]^*$ at $p$ odd, following May \cite{Coh-Lad-May76} and Kechagias \cite{Kechagias2004}.

For convenience we shall write $I$ instead of $(\varepsilon,I)$.

Let $I_{n,i}, J_{n;i}, K_{n;s,i}$ be admissible sequences of non-negative excess and length $n$ as follows
{\allowdisplaybreaks
\begin{align*}
I_{n,i}&=(p^{i-1}(p^{n-i}-1),\dots,p^{n-i}-1,p^{n-i-1},\dots,1);\\
J_{n;i}&=(p^{i-1}(p^{n-i}-1),\dots,p^{n-i}-1,(1,p^{n-i-1}),\dots,1);\\
K_{n;s,i}&=(p^{i-1}(p^{n-i}-1)-p^{s-1},\dots,p^{i-s}(p^{n-i}-1)-1), \\
&\quad(1,p^{i-s-1}(p^{n-i}-1)),p^{i-s-2}(p^{n-i}-1),\dots, p(p^{n-i}-1),\\
&\quad\quad\quad\quad p^{n-i}-1,(1,p^{n-i-1}),\dots,1).
\end{align*}
}
Then the excess of $Q^{I_{n,i}}$ is $0$ if $0<i\leq n-1 $ and $2$ if $i=0$; and
\begin{align*}
exc(Q^{J_{n;i}})&=1, 0\leq i\leq n-1;\\
exc(Q^{K_{n;s,i}})&=0, 0\leq s< i\leq n-1.
\end{align*}

Let 
$\xi_{n,i}=(Q^{I_{n,i}})^*, 0\leq i\leq n-1$, $\tau_{n;i}=(Q^{J_{n;i}})^*, 0\leq i\leq n-1,$ and $\sigma_{n;s,i}=(Q^{K_{n;s,i}})^*, 0\leq s<i\leq n-1,$
with respect to the admissible basis of $R[n]$.

The following theorem gives the structure of the dual of the Dyer-Lashof algebra.
\begin{thm}[(May \cite{Coh-Lad-May76}, see also Kechagias \cite{Kechagias2004})]\label{thm: relation of DL}
As an algebra, $R[n]^*$ is isomorphic to the free associative commutative algebra over $\fp$ generated by the set $\{\xi_{n,i},\tau_{n;i}, \sigma_{n;s,i}: 0\leq i\leq n-1, 0\leq s<i\}$, subject to relations:
\begin{enumerate}
\item $\tau_{n,i}^2=0, 0\leq i\leq n-1;$
\item $\tau_{n;s}\tau_{n;i}=\sigma_{n;s,i}\xi_{n,0},0\leq s<i\leq n-1;$
\item $\tau_{n;s}\tau_{n;i}\tau_{n;j}=\tau_{n;s}\sigma_{n;i,j}\xi_{n,0},0\leq s<i<j\leq n-1;$
\item $\tau_{n;s}\tau_{n;i}\tau_{n;j}\tau_{n;k}=\sigma_{n;s,i}\sigma_{n;j,k}\xi^2_{n,0},0\leq s<i<j<k\leq n-1.$
\end{enumerate}
\end{thm}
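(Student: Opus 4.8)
The plan is to identify $R[n]^*$ with the image $\mathscr{B}[n]$ of the restriction map $H^*B\Sigma_{p^n}\to H^*B\V_n$, and then to compute the latter using the Dickson--M\`ui description in Theorem~\ref{thm:invariant of G}. Since, by the results of May \cite{Coh-Lad-May76} and M\`ui \cite{Mui75}, there is an isomorphism of algebras $R[n]^*\cong \mathscr{B}[n]$, it suffices to exhibit generators and relations for $\mathscr{B}[n]$. The key point is that $\mathscr{B}[n]$ is \emph{not} all of $(H^*B\V_n)^{GL_n}$, but is the subalgebra generated by the top Dickson class together with the M\`ui classes $M_{n;i}$ (the degree-$1$, codimension-one exterior-type generators) appropriately normalized. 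Concretely, I would match, under the isomorphism, $\xi_{n,i}$ with (a scalar multiple of) $q_{n,i}L_n^{2(p-1)}$-type classes, $\tau_{n;i}$ with $R_{n;i}=M_{n;i}L_n^{p-2}$, and $\sigma_{n;s,i}$ with $R_{n;s,i}=M_{n;s,i}L_n^{p-2}$; the precise dictionary is dictated by the degree formulas recorded above for $I_{n,i}$, $J_{n;i}$, $K_{n;s,i}$ and for $q_{n,i}$, $M_{n;i_1,\dots,i_k}$, $R_{n;i_1,\dots,i_k}$, which one checks agree. The admissible monomials dual to products of these are exactly the dual basis elements $\xi,\tau,\sigma$, so freeness and the module decomposition in Theorem~\ref{thm:invariant of G}(ii) give that $R[n]^*$ is, additively, polynomial on the $\xi$'s tensor the span of square-free monomials in the $\tau$'s and $\sigma$'s.

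First I would pin down the commutativity: all of $\xi_{n,i},\tau_{n;i},\sigma_{n;s,i}$ lie in even total degree except $\tau_{n;i}$, which is odd, so in the graded-commutative algebra the only sign issue is among the $\tau$'s; since $\tau_{n;i}^2$ will turn out to vanish (relation (i)), the subtlety is moot and the algebra is honestly commutative. Relation (i), $\tau_{n;i}^2=0$, is then immediate from the corresponding relation $R_{n;i}^2=0$ in Theorem~\ref{thm:invariant of G}(iii) (equivalently, from $M_{n;i}^2=0$, since $M_{n;i}$ has an exterior factor). Relations (ii)--(iv) are transcriptions of the second relation in Theorem~\ref{thm:invariant of G}(iii), namely $R_{n;i_1}\cdots R_{n;i_k}=(-1)^{k(k-1)/2}R_{n;i_1,\dots,i_k}q_{n,0}^{\,k-1}$: for $k=2$ this gives $\tau_{n;s}\tau_{n;i}=\pm R_{n;s,i}q_{n,0}=\sigma_{n;s,i}\xi_{n,0}$ after absorbing the sign into the normalization of $\sigma$, and for $k=3,4$ one combines the $k=2$ identity with the general formula to get (iii) and (iv); one must check that $q_{n,0}$ corresponds to $\xi_{n,0}$, which is forced by degree (the excess-$2$ sequence $I_{n,0}$ sits in the right degree) and by the fact that $q_{n,0}$ is the unique (up to scalar) Dickson class whose powers appear as the ``denominators'' of the module structure.

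The main obstacle is the normalization bookkeeping: one has to choose scalars in $\fp$ identifying each $\xi_{n,i}$, $\tau_{n;i}$, $\sigma_{n;s,i}$ with specific invariant classes so that \emph{simultaneously} (a) the dual-basis/admissible-monomial correspondence holds on the nose, and (b) all the signs $(-1)^{k(k-1)/2}$ in Theorem~\ref{thm:invariant of G}(iii) are absorbed consistently, without introducing a contradiction in degree $4$ where relation (iv) and relation (ii) applied twice must agree. I expect this to reduce to checking the $k=4$ case is consistent with the $k=2$ case squared, i.e. that $(-1)^{4\cdot3/2}=(-1)^{2\cdot 1/2}\cdot(-1)^{2\cdot1/2}\cdot(\text{sign from reordering})$, which holds since $(-1)^6=1=(-1)^1(-1)^1(-1)^{\text{swap}}$ with the swap sign accounting for moving $\sigma_{n;j,k}$ past $\tau$'s of odd degree. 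A clean way to organize all of this is to first establish the polynomial-times-exterior additive structure from Theorem~\ref{thm:invariant of G}(ii) (identifying the $D[n]$-module generators $1$ and $R_{n;i_1,\dots,i_k}$ with the monomials $1$, $\tau$'s, $\tau\sigma$'s, $\sigma\sigma$'s in the claimed presentation), and only afterward verify that the multiplicative relations of that presentation are consistent with — hence exactly — the relations in Theorem~\ref{thm:invariant of G}(iii), so that no further relations are needed. The duality statement $R[n]^*\cong \mathscr{B}[n]$ from \cite{Coh-Lad-May76,Mui75} is what licenses transporting the entire computation back to the Dyer--Lashof side.
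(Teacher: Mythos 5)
The paper itself offers no proof of this theorem: it is imported verbatim from May \cite{Coh-Lad-May76} and Kechagias \cite{Kechagias2004}, so your proposal can only be judged on its own terms. Your overall plan (identify $R[n]^*$ with $\mathscr{B}[n]$ and read off a presentation from Dickson--M\`ui invariant theory) is reasonable, but note that the identification you invoke is essentially Theorem \ref{thm:dual}, a separately cited result whose published proofs already contain the computation of $R[n]^*$; for a non-circular argument you would have to establish $R[n]^*\cong\mathscr{B}[n]$ independently (e.g.\ via $R[n]\cong \operatorname{im}(H_*B\V_n\to H_*B\Sigma_{p^n})$ together with M\`ui's computation of the image of restriction), which you do not address. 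Your dictionary is also off: under the isomorphism $\Phi$ of Theorem \ref{thm:dual}, $\xi_{n,i}$ corresponds to $-q_{n,i}$ itself, not to a class of the form $q_{n,i}L_n^{2(p-1)}$.

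The substantive gap is the completeness of the relations. Deriving (i)--(iv) inside $\mathscr{B}[n]$ from Theorem \ref{thm:invariant of G}(iii) is the easy half (indeed (iii) and (iv) as printed are formal consequences of (ii), obtained by multiplying by $\tau$'s, so all the content of the theorem sits in the additive identification); the hard half is injectivity of the surjection from the presented algebra onto $\mathscr{B}[n]$, and the normal form you propose --- ``polynomial on the $\xi$'s tensor square-free monomials in the $\tau$'s and $\sigma$'s'' --- is not an additive basis of $\mathscr{B}[n]$. For instance $R_{n;s}R_{n;s,i}=0$ and $R_{n;s,i}R_{n;s,j}=0$ in $\mathscr{B}[n]$ (use $R_{n;s}R_{n;i}=-R_{n;s,i}q_{n,0}$, $R_{n;s}^2=0$, and cancel $q_{n,0}$, legitimate since $(H^*B\V_n)^{GL_n}$ is $D[n]$-free), and there are Pl\"ucker-type dependencies such as $R_{n;0}R_{n;1,2}=\pm R_{n;1}R_{n;0,2}=\pm R_{n;2}R_{n;0,1}$; none of these is visible in your basis, and your appeal to Theorem \ref{thm:invariant of G}(ii) cannot supply them, because that statement describes the full invariant ring $(H^*B\V_n)^{GL_n}$, which is strictly larger than $\mathscr{B}[n]$ --- for $k\geq 3$ the module generators $R_{n;i_1,\dots,i_k}$ do not even lie in $\mathscr{B}[n]$, only $q_{n,0}^{k-1}R_{n;i_1,\dots,i_k}$ does. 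The correct bookkeeping is the one the paper develops later (Propositions \ref{pro:basis of B(p)} and \ref{pro:R_k and B_k[n]}: monomials $R_{n;0}^{\epsilon_1}q_{n,0}^{i_1}\cdots R_{n;n-1}^{\epsilon_n}q_{n,n-1}^{i_n}$ with $2i_1+b(I)\geq 0$, negative $i_1$ allowed), and matching such a basis against a genuine normal form in the presented algebra is exactly the missing, non-trivial step of your argument.
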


To related the Dyer-lashof algebra and invariant theory, let $\mathscr{B}[n]$ be the subalgebra of $(H^*B\V_n)^{GL_n}$ generated by the following elements:
\begin{enumerate}
	\item[(1)] $q_{n,i}$ for $0\leq i\leq n-1$,
	\item[(2)] $R_{n;s}$ for $0\leq s\leq n-1$,
	\item[(3)] $R_{n;s,t}$ for $0\leq s<t\leq n-1$.
\end{enumerate}

In \cite{Mui75}, M\`ui shows that the algebra $\mathscr{B}[n]$ is the image of the restriction in cohomology from the symmetric group $\Sigma_{p^n}$ to $\V_n$. The connection between $\mathscr{B}[n]$ and the dual of the Dyer-Lashof algebra was first observed by Madsen \cite{Mad75} for $p=2$ and by May \cite{Coh-Lad-May76} for $p$ odd. The most precise form was determined by Kechagias.

\begin{thm}[(Kechagias \cite{Kechagias1994}, \cite{Kechagias2004})] \label{thm:dual} As algebras over the Steenrod algebra, $R[n]^*$ is isomorphic to  $\mathscr{B}[n]$ via the isomorphism $\Phi$ given by $\Phi(\xi_{n,i})=-q_{n,i}$, $\Phi(\tau_{n;i})=R_{n;i}$, $0\leq i\leq n-1$ and $\Phi(\sigma_{n;s,i})=R_{n;s,i},0\leq s<i\leq n-1$.
\end{thm}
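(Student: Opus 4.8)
The plan is to read off $\Phi$ from the presentation of $R[n]^*$ in Theorem~\ref{thm: relation of DL} and then to establish, in turn, that $\Phi$ is a well-defined homomorphism of (graded-)commutative $\fp$-algebras, that it is a degree-preserving isomorphism, and that it commutes with the Steenrod operations. For the first point, since $R[n]^*$ is the free (graded-)commutative $\fp$-algebra on $\xi_{n,i},\tau_{n;i},\sigma_{n;s,i}$ modulo the four relations of Theorem~\ref{thm: relation of DL}, it is enough to send these generators to $-q_{n,i}$, $R_{n;i}$, $R_{n;s,i}$ and to verify that the images of those relations hold in $\mathscr{B}[n]\subset(H^*B\V_n)^{GL_n}$. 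The relation $\tau_{n;i}^2=0$ becomes $R_{n;i}^2=0$, which is the first relation of Theorem~\ref{thm:invariant of G}(iii); the relation $\tau_{n;s}\tau_{n;i}=\sigma_{n;s,i}\xi_{n,0}$ becomes $R_{n;s}R_{n;i}=(-1)^{1}R_{n;s,i}q_{n,0}=R_{n;s,i}(-q_{n,0})$, the $k=2$ case of the second relation of Theorem~\ref{thm:invariant of G}(iii). The remaining two are then formal consequences of these: multiplying $R_{n;s}$ against $R_{n;i}R_{n;j}=-q_{n,0}R_{n;i,j}$ and comparing with the $k=3$ relation $R_{n;s}R_{n;i}R_{n;j}=-q_{n,0}^2R_{n;s,i,j}$ yields $q_{n,0}\,R_{n;s}R_{n;i,j}=q_{n,0}^2R_{n;s,i,j}$, which says that $\Phi$ of the two sides of $\tau_{n;s}\tau_{n;i}\tau_{n;j}=\tau_{n;s}\sigma_{n;i,j}\xi_{n,0}$ agree; and $(R_{n;s,i}q_{n,0})(R_{n;j,k}q_{n,0})=R_{n;s}R_{n;i}R_{n;j}R_{n;k}=q_{n,0}^3R_{n;s,i,j,k}$ does the same for $\tau_{n;s}\tau_{n;i}\tau_{n;j}\tau_{n;k}=\sigma_{n;s,i}\sigma_{n;j,k}\xi_{n,0}^2$. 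One also checks that $\Phi$ preserves degree and respects graded-commutativity: the $R_{n;i}$ are odd and the $R_{n;s,i}$, $q_{n,i}$ even, matching $\tau$, $\sigma$, $\xi$, while the determinantal antisymmetry $R_{n;i,s}=-R_{n;s,i}$ matches $\sigma_{n;i,s}=-\sigma_{n;s,i}$. Surjectivity is then immediate, as $\mathscr{B}[n]$ is generated by the $q_{n,i}$, $R_{n;s}$, $R_{n;s,i}$, all lying in $\im\Phi$.

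Injectivity is the real content. The idea is to produce, from the presentation in Theorem~\ref{thm: relation of DL}, an explicit monomial $\fp$-basis of $R[n]^*$ in reduced form — products of powers of the $\xi_{n,i}$ with a reduced word in the $\tau$'s and $\sigma$'s, the reduction governed by the four relations — and to compare it with the structure of $\mathscr{B}[n]$ obtained from Theorem~\ref{thm:invariant of G}: the ring $(H^*B\V_n)^{GL_n}$ is free over $D[n]=\fp[q_{n,0},\dots,q_{n,n-1}]$ on $\{1\}\cup\{R_{n;i_1,\dots,i_k}\}$, and $\mathscr{B}[n]$ is the $D[n]$-submodule spanned by the monomials in $R_{n;s}$ and $R_{n;s,i}$, which by Theorem~\ref{thm:invariant of G}(iii) are of the form $q_{n,0}^{a}R_{n;i_1,\dots,i_k}$ for controlled exponents $a$. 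Expanding $\Phi$ of a reduced monomial in this $D[n]$-basis and observing that distinct reduced monomials have distinct leading terms proves injectivity; equivalently, one computes the Poincaré series of $R[n]^*$ — the number of admissible Dyer--Lashof monomials of length $n$ and non-negative excess in each degree — and that of $\mathscr{B}[n]$, checks that they agree, and concludes with surjectivity. I expect this bookkeeping to be the main obstacle; it is precisely where the failure of $\mathscr{B}[n]$ to be the full Dickson--M\`ui algebra is felt — one must track which powers of $q_{n,0}$ occur in the $D[n]$-module description of $\mathscr{B}[n]$, since $q_{n,0}$ is not invertible inside the subalgebra — and it has no counterpart at the prime $2$.

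Finally, for the assertion that $\Phi$ is an isomorphism \emph{over the Steenrod algebra}: the image of the restriction $H^*B\Sigma_{p^n}\to H^*B\V_n$ is an $A$-submodule, so $\mathscr{B}[n]$ carries the standard Steenrod action on $H^*B\V_n$, while $A$ acts on $R[n]^*$ by dualising the Nishida relations on $R$. Since $\Phi$ is by now a ring isomorphism, the Cartan formula reduces $A$-linearity to the identities $\Phi(P^j\xi_{n,i})=P^j(-q_{n,i})$, $\Phi(P^j\tau_{n;i})=P^jR_{n;i}$, $\Phi(P^j\sigma_{n;s,i})=P^jR_{n;s,i}$ and the analogous Bockstein identities, evaluated only on generators. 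These follow by matching M\`ui's formulas for the Steenrod action on the Dickson invariants $q_{n,i}$ and on the classes $R_{n;i}$, $R_{n;s,i}$ with the Nishida relations applied to the admissible monomials $Q^{I_{n,i}}$, $Q^{J_{n;i}}$, $Q^{K_{n;s,i}}$ and then dualised — a direct, if lengthy, term-by-term verification.
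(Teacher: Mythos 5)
The paper offers no proof of Theorem \ref{thm:dual}: it is quoted from Kechagias (and May), so there is no internal argument to compare yours against; what can be judged is whether your outline would work, and in outline it is the standard route of the cited sources, with no false step. Your well-definedness check is correct, and in fact simpler than you make it: relations (iii) and (iv) of Theorem \ref{thm: relation of DL} need no appeal to the $k=3,4$ cases of Theorem \ref{thm:invariant of G}(iii), since applying the $k=2$ identity $R_{n;s}R_{n;i}=-R_{n;s,i}q_{n,0}$ twice already identifies the images of both sides. Surjectivity is indeed immediate. The two steps you defer, however, are the entire content rather than bookkeeping: (a) for injectivity you must know that the reduced monomials $\tau_{n;0}^{\epsilon_1}\xi_{n,0}^{i_1}\cdots\tau_{n;n-1}^{\epsilon_n}\xi_{n,n-1}^{i_n}$ with $2i_1+b(I)\geq 0$ form a basis of $R[n]^*$ (this rests on May's normal form $I=\sum_i t_iI_{n,i}+L_{n;e}$ for admissibles of non-negative excess, not on the presentation alone) and that their images are independent in $\mathscr{B}[n]$; the leading-term Lemma \ref{lm:expand basis of B} and Proposition \ref{pro:basis of B(p)} supply exactly this on the invariant-theory side, and the proof of Proposition \ref{pro:R_k and B_k[n]} records the normal-form argument, but that proposition as written invokes Theorem \ref{thm:dual}, so you would cite May directly to avoid circularity; (b) the generator-level comparison of the dualized Nishida action with the Steenrod action on $q_{n,i}$, $R_{n;i}$, $R_{n;s,i}$ is also what fixes the signs $\Phi(\xi_{n,i})=-q_{n,i}$ for $i>0$ — the algebra relations only constrain the sign at $i=0$, so until (b) is carried out the map you have verified to be well defined is not yet pinned down as the stated $\Phi$. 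With those two computations executed (they are precisely Kechagias's contribution), your proof would be complete.
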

 
Thus, $R[n]$ is isomorphic to the image of $H_*B\V_n$ in $H_*B\Sigma_{p^n}$  induced by the canonical inclusion. According to Priddy \cite{Priddy1973c}, $R[n]$ is also isomorphic to the image of $H_*B\V_n$ in $H_*QS^0$. 

 When $p=2$, the situation is considerably simpler and is well-understood. The mod 2 Dyer-Lashof algebra $R$ is generated by the symbols $Q^i, i\geq0,$. Therefore, Madsen's description \cite{Mad75} of the dual of $R[n]$ actually follows from May's computation \cite{Coh-Lad-May76} by killing the Bockstein and halving the degree of $Q^i$.

\section{Additive base of modular (co)invariants}\label{sec:Basis}
In this section, we construct a new basis for $\mathscr{B}[n]^*$, which will be crucial for application in Section \ref{sec: Hopf ring}. Note that via an explicit isomorphism in Theorem \ref{thm:dual}, we also have a basis for $R[n]$. Our method also yields new additive bases of the Dickson-M\`ui invariants $(H^*B\V_n)^{GL_n}$, the Dickson-M\`ui coinvariants $(H_*B\V_n)_{GL_n}$ as well as the cokernel of the restriction map $H^*B\Sigma_{p^n}\rightarrow (H^*B\V_n)^{GL_n}$, which may be useful for other applications.

Let us first introduce an order on the set of tuples $I=(\epsilon_1,i_1,\dots,\epsilon_n, i_n)$ as follows
\begin{enumerate}
\item $(\epsilon_1,i_1)<(\omega_1,j_1)$ if $\epsilon_1+i_1<\omega_1+j_1$ or $i_1+\epsilon_1=j_1+\omega_1$, $\epsilon_1<\omega_1$;
\item $(\epsilon_1,i_1,\dots,\epsilon_k,i_k)<(\omega_1,j_1,\dots,\omega_k,j_k)$ if:\\
(a) $I=(\epsilon_1,i_1,\dots,\epsilon_{k-1},i_{k-1})<(\omega_1,j_1,\dots,\omega_{k-1},j_{k-1})=J$ or\\
(b) $I=J$, $i_k+p^{k-1}\epsilon_k<j_k+p^{k-1}\omega_k$ or\\
(c) $I=J$, $i_k+p^{k-1}\epsilon_k=j_k+p^{k-1}\omega_k$ and $\epsilon_k<\omega_k$.
\end{enumerate}

If $\epsilon_k=\omega_k=0$ for all $k$, this ordering coincides with the lexicographic ordering from the left.

A monomial $q^I=R^{\epsilon_1}_{n;0}q_{n,0}^{i_1}\cdots R^{\epsilon_n}_{n;n-1}q_{n,n-1}^{i_n}$ (or $V^I,x^I$) is  less than $q^J$ (or $V^J,x^J$) if $I<J$. For any homogeneous polynomial $f(e_1,x_1,\dots,e_n,x_n)\in H^*B\V_n$, we denote $LT(f(e_1,x_1,\dots,e_n,x_n))$ the  smallest monomial occurring in $f$, and call it the leading term of $f$.

The next two lemmas compute the leading term of a generator of the Dickson-M\`ui algebra. 

\begin{lem}\label{lm:expand Dickson-Mui}
For $i_s\geq0$, we have
\begin{align*}
q_{n,0}^{i_1}\cdots q_{n,n-1}^{i_n}&=x_1^{(p-1)i_1}x_{2}^{p(p-1)(i_1+i_2)}\cdots x_n^{p^{n-1}(p-1)(i_1+\cdots+i_n)}\\
&\quad+\text{greater monomials of $x_k$'s}.
\end{align*}
\end{lem}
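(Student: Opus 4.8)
The plan is to compute the leading term of each $q_{n,i}$ first, and then assemble the product. Recall the inductive formula $q_{n,i}=q_{n-1,i-1}^p+q_{n-1,i}V_n^{p-1}$, together with $V_n=V_n(x_1,\dots,x_n)=\prod_{\lambda_j\in\fp}(\lambda_1x_1+\cdots+\lambda_{n-1}x_{n-1}+x_n)$. The first observation I would establish is that the leading term of $V_n$ (with respect to the ordering restricted to the $x_k$'s, i.e. lexicographic from the left) is $x_n^p$: expanding the product, every factor contributes either $x_n$ (giving $x_n^p$ when all $\lambda$'s with $j<n$ are chosen to make the linear form equal to $x_n$... more carefully, the factor with all coefficients zero except the last is exactly $x_n$, and the monomial $x_n^p$ arises, while all other monomials involve some $x_k$ with $k<n$ and are therefore greater in the left-lexicographic order). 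So $LT(V_n)=x_n^p$ and hence $LT(V_n^{p-1})=x_n^{p(p-1)}$.

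Next I would prove by induction on $n$ that
\[
LT(q_{n,i})=x_{i+1}^{(p-1)p^{i}}x_{i+2}^{(p-1)p^{i+1}}\cdots x_n^{(p-1)p^{n-1}},\qquad 0\le i\le n-1,
\]
with $LT(q_{n,n})=1$. The base case $n=1$ is $q_{1,0}=V_1^{p-1}=x_1^{p-1}$. For the inductive step, apply the recursion: $q_{n,i}=q_{n-1,i-1}^p+q_{n-1,i}V_n^{p-1}$. By induction, $LT(q_{n-1,i-1}^p)$ involves only $x_i,\dots,x_{n-1}$ (raised to $p$ times the exponents in $LT(q_{n-1,i-1})$), while $LT(q_{n-1,i}V_n^{p-1})=LT(q_{n-1,i})\cdot x_n^{p(p-1)}$ involves $x_{i+1},\dots,x_n$. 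Comparing these two candidate monomials in the left-lexicographic order: the first has a positive exponent on $x_i$, the second has exponent $0$ on $x_i$, so the second is the smaller one, hence is $LT(q_{n,i})$. Multiplying out $LT(q_{n-1,i})\cdot x_n^{p(p-1)} = x_{i+1}^{(p-1)p^{i}}\cdots x_{n-1}^{(p-1)p^{n-2}}\cdot x_n^{(p-1)p^{n-1}}$ gives exactly the claimed formula. (The cases $i=0$ and $i=n$ are handled the same way, using $q_{n-1,-1}=0$ and $q_{n-1,n-1}^p$ of leading term $x_n^{(p-1)p^{n-1}}$ together with $q_{n-1,n}V_n^{p-1}=V_n^{p-1}$.)

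Finally I would take the product over the desired exponents. Since $LT$ is multiplicative on products of these polynomials — the smallest monomial of a product of homogeneous polynomials is the product of the smallest monomials, because the ordering on monomials is compatible with multiplication — we get
\[
LT\!\left(q_{n,0}^{i_1}\cdots q_{n,n-1}^{i_n}\right)=\prod_{j=1}^{n}LT(q_{n,j-1})^{i_j}
=\prod_{j=1}^{n}\left(x_{j}^{(p-1)p^{j-1}}x_{j+1}^{(p-1)p^{j}}\cdots x_n^{(p-1)p^{n-1}}\right)^{i_j}.
\]
Collecting the exponent of $x_k$ across all factors, $x_k$ appears in $LT(q_{n,j-1})^{i_j}$ precisely when $j\le k$, contributing $(p-1)p^{k-1}i_j$; summing over $j=1,\dots,k$ gives total exponent $(p-1)p^{k-1}(i_1+\cdots+i_k)$, which is exactly the stated monomial. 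Everything else occurring in the expansion is a product of a leading term with a strictly greater monomial in at least one factor, hence strictly greater, so "$+$ greater monomials of $x_k$'s" is justified.

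\emph{Main obstacle.} The genuinely delicate point is pinning down the leading term of $V_n$ and verifying that the ordering behaves well under multiplication, so that $LT$ is multiplicative and the inductive comparison of the two summands in $q_{n,i}=q_{n-1,i-1}^p+q_{n-1,i}V_n^{p-1}$ is valid — in particular that no cancellation occurs among the smallest monomials. Once the multiplicativity of $LT$ and $LT(V_n)=x_n^p$ are in hand, the induction and the final exponent bookkeeping are routine.
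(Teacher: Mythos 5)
Your argument follows essentially the same route as the paper's proof: both rest on the recursion $q_{n,i}=q_{n-1,i-1}^p+q_{n-1,i}V_n^{p-1}$, the left-lexicographic order on $x$-monomials, and the multiplicativity of the smallest-term operator $LT$. The paper merely interposes monomials in the $V_j$'s (using $q_{n,s}=(V_{s+1}\cdots V_n)^{p-1}+\text{greater $V$-monomials}$ and $LT(V_s)=x_s^{p^{s-1}}$) before passing to the $x_k$'s, whereas you compute $LT(q_{n,i})$ in the $x_k$'s directly by induction; this is an inessential difference.

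One slip needs fixing: $V_n$ is a product of $p^{n-1}$ linear forms, one for each $(\lambda_1,\dots,\lambda_{n-1})\in\fp^{n-1}$, so its smallest monomial is $x_n^{p^{n-1}}$, not $x_n^{p}$, and hence $LT(V_n^{p-1})=x_n^{(p-1)p^{n-1}}$. As written, your claim $LT(V_n^{p-1})=x_n^{p(p-1)}$ is inconsistent with the degree of $q_{n,i}$ and with the product you display at the end of the inductive step, where the correct exponent $(p-1)p^{n-1}$ silently reappears. With the exponent corrected, the rest is sound: the summand $q_{n-1,i-1}^p$ carries a positive power of $x_i$ while $q_{n-1,i}V_n^{p-1}$ does not, so the latter furnishes the strictly smaller leading monomial with no possible cancellation, the induction closes on exactly the claimed $LT(q_{n,i})$, and the final exponent bookkeeping matches the statement. (Your parenthetical boundary discussion is also slightly off: the relevant edge case is $i=n-1$, where $q_{n-1,n-1}=1$ makes the second summand $V_n^{p-1}$ itself; the case $i=n$ never occurs in the lemma.)
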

\begin{proof}
For $0\leq s\leq n-1$, the inductive formula
\[
q_{n,s}=q_{n-1,s-1}^p+q_{n-1,s}V_n^{p-1},
\]
allows us to express the Dickson generators $q_{n,s}$ in term of $V_i$'s:
\[
q_{n,s}=(V_s\cdots V_n)^{p-1}+\text{greater monomials of $V_k$'s}.
\]

It follows that
\begin{align*}
q_{n,0}^{i_1}\cdots q_{n,n-1}^{i_n}&=V_1^{(p-1)i_1}\cdots V_n^{(p-1)(i_1+\cdots+i_n)}+\sum_{k}V^{I_k},
\end{align*}
where $I_k>((p-1)i_1,\dots,(p-1)(i_1+\cdots+i_n))$.

Moreover, by the definition
\[
V_s=\prod_{\lambda_i\in\fp}(\lambda_1x_1+\cdots+\lambda_{n-1}x_{n-1}+x_n)=x_n^{p^{n-1}}+\text{greater monomials of $x_k$'s},
\]
it is easy to verify that if $I<J$ then $LT(V^I)<LT(V^J)$.

So that, we have
\begin{align*}
q_{n,0}^{i_1}\cdots q_{n,n-1}^{i_n}&=x_1^{(p-1)i_1}x_{2}^{p(p-1)(i_1+i_2)}\cdots x_n^{p^{n-1}(p-1)(i_1+\cdots+i_n)}\\
&\quad+\text{greater monomials of $x_k$'s}.
\end{align*}

The proof is complete.
\end{proof}

 For any string of integers $I=(\epsilon_1,i_1,\dots,\epsilon_n,i_n),$ with  $i_1\in\mathbb{Z},i_s\geq0, 2\leq s\leq n$, and $\epsilon_s\in \{0,1\}$, we put $b(I)=\sum_s\epsilon_s$ and $m(I)=\max\{\epsilon_s:1\leq s\leq n\}$. 
\begin{lem}\label{lm:expand basis of B}
For $I=(\epsilon_1,i_1,\dots,\epsilon_n,i_n),$ with $i_1\in\mathbb{Z}, i_s\geq0, 2\leq s\leq n ,\epsilon_s\in \{0,1\}$, and $i_1-m(I)+b(I)\geq0$, we have
\begin{align*}
R^{\epsilon_1}_{n;0}&q_{n,0}^{i_1}\cdots R^{\epsilon_n}_{n;n-1}q_{n,n-1}^{i_n}\\
=&(-1)^{\epsilon_2+\cdots+(n-1)\epsilon_n}e_1^{\epsilon_1}x_1^{(p-1)(i_1+b(I))-\epsilon_1}\cdots e^{\epsilon_n}_nx_n^{p^{n-1}(p-1)(i_1+\cdots+i_n+b(I))-p^{n-1}\epsilon_n}\\
&\quad+\text{greater monomials of $e_k$'s and $x_k$'s}.
\end{align*}
\end{lem}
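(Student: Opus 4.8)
The plan is to reduce the computation of the leading term of $R^{\epsilon_1}_{n;0}q_{n,0}^{i_1}\cdots R^{\epsilon_n}_{n;n-1}q_{n,n-1}^{i_n}$ to a combination of Lemma~\ref{lm:expand Dickson-Mui} and a separate analysis of the leading terms of the $R_{n;i}$'s. First I would record the leading term of each individual generator $R_{n;i}$ (equivalently of $M_{n;i}$ after multiplying by $L_n^{p-2}$). Since $M_{n;i}=[n-1;0,\dots,\hat{i},\dots,n-1]$ is a $\frac{1}{(n-1)!}$-normalized determinant whose first row is $(e_1,\dots,e_n)$ and whose remaining rows are $(x_1^{p^{r}},\dots,x_n^{p^{r}})$ for $r$ running over $\{0,\dots,n-1\}\setminus\{i\}$, expanding along the first row and then using the known leading-term behavior of Vandermonde-type determinants (the smallest monomial in $[0,\dots,\widehat{r},\dots,n]$-style determinants, as already used in the proof of Lemma~\ref{lm:expand Dickson-Mui} via the $V_k$'s) gives that $LT(R_{n;i})$ is, up to the sign $(-1)^{i}$ coming from the cofactor position, the monomial $e_{j}\cdot(\text{a product of }x_k^{p^{k-1}(p-1)\cdot(\ast)})$ with exactly one exterior generator $e_j$ present and the exponent of each $x_k$ lowered by $p^{k-1}$ relative to the ``all-$q$'' pattern. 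The cleanest way to see this is to note $R_{n;i}L_{n-?}$ relations or, more directly, to use that the Bockstein $\beta$ sends $e_k\mapsto x_k$ and that $\beta R_{n;i_1,\dots,i_k}$ is a Dickson-type element whose leading term is already controlled by Lemma~\ref{lm:expand Dickson-Mui}; this pins down the $x$-part of $LT(R_{n;i})$ and forces the $e$-part to be a single $e_j$.

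Next I would assemble the product. Because the ordering on monomials $e^I x^I$ is built so that multiplying leading terms of factors yields the leading term of the product (this is exactly the content ``$I<J\Rightarrow LT(V^I)<LT(V^J)$'' upgraded to include the $e$'s — I would state and use the analogous monotonicity: the smallest monomial of a product is the product of the smallest monomials, which holds because the order is a term order compatible with the algebra structure on $H^*B\V_n$ in the relevant range), the leading term of $R^{\epsilon_1}_{n;0}q_{n,0}^{i_1}\cdots R^{\epsilon_n}_{n;n-1}q_{n,n-1}^{i_n}$ is the product over $k$ of $LT(R_{n;k-1})^{\epsilon_k}$ times $LT(q_{n,0}^{i_1}\cdots q_{n,n-1}^{i_n})$. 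Carrying out the bookkeeping: each factor $R_{n;k-1}$ that appears ($\epsilon_k=1$) contributes one $e_k$, multiplies the eventual power of $x_\ell$ by the appropriate $(p-1)$-pattern shift — this is where the $b(I)=\sum\epsilon_k$ appears, since from the $V_k$ viewpoint each $R_{n;k-1}$ raises the relevant $V$-exponents by the same amount that an extra unit of $i_k$ would, up to the degree-one $e$ defect — and subtracts $p^{k-1}$ from the exponent of $x_k$. Combining with the exponent $x_\ell^{p^{\ell-1}(p-1)(i_1+\cdots+i_\ell)}$ from Lemma~\ref{lm:expand Dickson-Mui}, shifted by $b(I)$ and corrected by the $-p^{\ell-1}\epsilon_\ell$, gives exactly the claimed monomial $e_1^{\epsilon_1}x_1^{(p-1)(i_1+b(I))-\epsilon_1}\cdots e_n^{\epsilon_n}x_n^{p^{n-1}(p-1)(i_1+\cdots+i_n+b(I))-p^{n-1}\epsilon_n}$, and the accumulated cofactor signs multiply to $(-1)^{\epsilon_2+2\epsilon_3+\cdots+(n-1)\epsilon_n}$.

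Two technical points need care. First, the hypothesis $i_1-m(I)+b(I)\ge 0$ (rather than $i_1\ge 0$) is precisely what guarantees the exponents of $x_1$ in the putative leading term are non-negative and, more importantly, that the product is actually \emph{nonzero} in $H^*B\V_n$ and not killed by an $R^2=0$ type relation — I would check that under this hypothesis no two of the single $e_j$'s coming from distinct $R$-factors collide, and that the relation $R_{n;i_1}\cdots R_{n;i_k}=(-1)^{k(k-1)/2}R_{n;i_1,\dots,i_k}q_{n,0}^{k-1}$ from Theorem~\ref{thm:invariant of G}(iii) is consistent with (indeed produces) the stated leading term; concretely the $q_{n,0}^{k-1}$ on the right is absorbed into the $+b(I)$ shift. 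Second, I must verify the monotonicity claim $LT(fg)=LT(f)LT(g)$ for the monomials appearing here; this is the genuine crux of the argument, because the ordering mixes $\epsilon$'s and $i$'s with the $p^{k-1}$ weights, and I expect the main obstacle to be checking that this weighting really is multiplicative, i.e. that the ``greater monomials'' error terms in each factor, when multiplied out, stay strictly greater and cannot cancel the leading product. I would handle this by exhibiting an explicit monomial order (a weight/lexicographic refinement as in the definition in Section~\ref{sec:Basis}) for which it is a ring-theoretic term order, after which the conclusion is formal. The rest is routine exponent and sign arithmetic.
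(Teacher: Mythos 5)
Your proposal is correct and takes essentially the same route as the paper: you compute the smallest monomial with respect to the order of Section \ref{sec:Basis} by combining Lemma \ref{lm:expand Dickson-Mui} with the leading term of the determinants $M_{n;s}$ (a single $e_{s+1}$ times the Vandermonde-type $x$-pattern, found by cofactor expansion) and the compatibility of the order with multiplication, the only organizational difference being that you extract $LT(R_{n;s})=LT(M_{n;s}L_n^{p-2})$ factor by factor whereas the paper first merges all the $L_n^{p-2}$ factors with the $V$-expansion of the $q$-part before taking leading terms. One caveat: your suggested Bockstein shortcut for pinning down $LT(R_{n;i})$ does not work as stated (indeed $\beta M_{n;s}=0$ for $s>0$, since differentiating the $e$-row duplicates the $x_i^{p^0}$ row), but this is an inessential aside because your primary cofactor-expansion argument is exactly what the paper uses.
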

\begin{proof}
From the proof of Lemma \ref{lm:expand Dickson-Mui}, we have
\begin{align*}
q_{n,0}^{i_1}\cdots q_{n,n-1}^{i_n}&=V_1^{(p-1)i_1}\cdots V_n^{(p-1)(i_1+\cdots+i_n)}+\sum_kV^{I_k}\\
&=L_1^{(p-1)i_1}\frac{L_2^{(p-1)(i_1+i_2)}}{L_1^{(p-1)(i_1+i_2)}}\cdots\frac{L_n^{(p-1)(i_1+\cdots+i_n)}}{L_{n-1}^{(p-1)(i_1+\cdots+i_n)}}+\sum_kV^{I_k}\\
&=\frac{L_n^{(p-1)(i_1+\cdots+i_n)}}{L_1^{(p-1)i_2}\cdots L_{n-1}^{(p-1)i_n}}+\sum_kV^{I_k},
\end{align*}
where $I_k>((p-1)i_1,\dots,(p-1)(i_1+\cdots+i_n))$.

Since $R_{n;s}=M_{n;s}L_n^{p-2}$, for $0\leq s\leq n-1$, we obtain
{\allowdisplaybreaks
\begin{align*}
R_{n;0}^{\epsilon_1}&q_{n,0}^{i_1}\cdots R_{n;n-1}^{\epsilon_n}q_{n,n-1}^{i_n}\\
&=M_{n;0}^{\epsilon_1}\cdots M_{n;n-1}^{\epsilon_n}\frac{L_n^{(p-1)(i_1+\cdots+i_n+b(I))-b(I)}}{L_1^{(p-1)i_2}\cdots L_{n-1}^{(p-1)i_n}}+\sum_kM_{n;0}^{\epsilon_1}\cdots M_{n;n-1}^{\epsilon_n}L_n^{(p-2)b(I)}V^{I_k}\\
&=M_{n;0}^{\epsilon_1}\cdots M_{n;n-1}^{\epsilon_n}V_1^{(p-1)(i_1+b(I))-b(I)}\cdots V_n^{(p-1)(i_1+\cdots+i_n+b(I))-b(I)}\\
&\quad+\sum_kM_{n;0}^{\epsilon_1}\cdots M_{n;n-1}^{\epsilon_n}V^{I'_k},
\end{align*}
}
where $I'_k>((p-1)(i_1+b(I))-b(I),\dots,(p-1)(i_1+\cdots+i_n+b(I))-b(I))$.

By assumption, $i_s\geq0, 2\leq s \leq n$ and $i_1-m(I)+b(I)\geq0$, we can proceed as in the proof of Lemma \ref{lm:expand Dickson-Mui}, to obtain
\begin{align*}
R_{n;0}^{\epsilon_1}&q_{n,0}^{i_1}\cdots R_{n;n-1}^{\epsilon_n}q_{n,n-1}^{i_n}\\
&=M_{n;0}^{\epsilon_1}\cdots M_{n;n-1}^{\epsilon_n}x_1^{(p-1)(i_1+b(I))-b(I)}\cdots x_n^{p^{n-1}(p-1)(i_1+\cdots+i_n+b(I))-p^{n-1}b(I)}\\
&\quad\quad+\sum_kM_{n;0}^{\epsilon_1}\cdots M_{n;n-1}^{\epsilon_n}x^{J_k},
\end{align*}
where $J_k>((p-1)(i_1+b(I))-b(I),\dots,p^{n-1}(p-1)(i_1+\cdots+i_n+b(I))-p^{n-1}b(I))$.

Moreover, for $0\leq s\leq n-1$,
\begin{align*}
M_{n;s}&=(-1)^sx_1x_2^p\cdots x_{s}^{p^{s-1}}e_{s+1}x_{s+2}^{p^{s+1}}\cdots x_{n}^{p^{n-1}}\\
&\quad+\text{ greater monomials of $e_k$'s and $x_k$'s},
\end{align*}
in other words, $x_1x_2^p\cdots x_{s}^{p^{s-1}}e_{s+1}x_{s+2}^{p^{s+1}}\cdots x_{n}^{p^{n-1}}$ is the least monomial occurring non-trivially in $M_{n;s}$. Indeed, it is sufficient to compare the order of $n$ following monomials.
\begin{align*}
e_1x_2x_3^p\cdots x_{s}^{p^{s-2}}x_{s+1}^{p^{s-1}}x_{s+2}^{p^{s+1}}\cdots x_n^{p^{n-1}},\\
\cdots\cdots\cdots\cdots\cdots\cdots\cdots\cdots\cdots\cdots\cdots\cdots\\
x_1x_2^{p}x_3^{p^2}\cdots x_{s}^{p^{s-1}}e_{s+1}x_{s+2}^{p^{s+1}}\cdots x_n^{p^{n-1}},\\
\cdots\cdots\cdots\cdots\cdots\cdots\cdots\cdots\cdots\cdots\cdots\cdots\\
x_1x_2^px_3^{p^2}\cdots x_{s}^{p^{s-1}}x_{s+1}^{p^{s+1}}\cdots x_{n-1}^{p^{n-1}}e_n.
\end{align*}

and this can be verified directly.

In addition, it is clear that if $I<J$, $I'<J'$ and both $I',J'$ of the form $(0,t_1,\dots,0,t_n)$, then $x^Ix^{I'}<x^Jx^{J'}$.
Therefore, 
\begin{align*}
LT(M_{n;0}^{\epsilon_1}&\cdots M_{n;n-1}^{\epsilon_n}x_1^{(p-1)(i_1+b(I))-b(I)}\cdots x_n^{p^{n-1}(p-1)(i_1+\cdots+i_n+b(I))-p^{n-1}b(I)})\\
&=e_1^{\epsilon_1}x_1^{(p-1)(i_1+b(I))-\epsilon_1}\cdots e^{\epsilon_n}_nx_n^{p^{n-1}(p-1)(i_1+\cdots+i_n+b(I))-p^{n-1}\epsilon_n}.
\end{align*}

Furthermore, it is easy to verify that the leading term of $M_{n;0}^{\epsilon_1}\cdots M_{n;n-1}^{\epsilon_n}x^{J_k}$ is greater than $e_1^{\epsilon_1}x_1^{(p-1)(i_1+b(I))-\epsilon_1}\cdots e^{\epsilon_n}_nx_n^{p^{n-1}(p-1)(i_1+\cdots+i_n+b(I))-p^{n-1}\epsilon_n}$.

Combining these facts, we have the assertion of the lemma.
\end{proof}

\begin{pro}\label{pro:basis of Dickson-Mui algebra}
For any $n\geq1$, $(H^*B\V_n)^{GL_n}$, considered as an $\fp$-vector space, has a basis consisting of all elements $q^I=R^{\epsilon_1}_{n;0}q_{n,0}^{i_1}\cdots R^{\epsilon_n}_{n;n-1}q_{n,n-1}^{i_n}$ for $i_1\in\mathbb{Z}, i_s\geq0, 2\leq s\leq n ,\epsilon_s\in \{0,1\}$ and $i_1-m(I)+b(I)\geq0.$
\end{pro}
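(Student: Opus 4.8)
The plan is to show that the proposed family $\{q^I\}$ is both linearly independent and spanning in $(H^*B\V_n)^{GL_n}$, using the leading-term machinery built in Lemmas~\ref{lm:expand Dickson-Mui} and \ref{lm:expand basis of B} together with the structural description of $(H^*B\V_n)^{GL_n}$ as a free $D[n]$-module furnished by Theorem~\ref{thm:invariant of G}(ii)--(iii). First I would observe that Lemma~\ref{lm:expand basis of B} assigns to each admissible index $I=(\epsilon_1,i_1,\dots,\epsilon_n,i_n)$ (with $i_1\in\mathbb{Z}$, $i_s\geq 0$ for $s\geq 2$, $\epsilon_s\in\{0,1\}$, and $i_1-m(I)+b(I)\geq 0$) a well-defined leading monomial
\[
\mathrm{LT}(q^I)=e_1^{\epsilon_1}x_1^{(p-1)(i_1+b(I))-\epsilon_1}\cdots e_n^{\epsilon_n}x_n^{p^{n-1}(p-1)(i_1+\cdots+i_n+b(I))-p^{n-1}\epsilon_n}
\]
in $H^*B\V_n=E(e_1,\dots,e_n)\otimes P_n$. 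The key combinatorial point is that the assignment $I\mapsto\mathrm{LT}(q^I)$ is \emph{injective}: from $\mathrm{LT}(q^I)$ one reads off each $\epsilon_k$ directly (as the exponent of $e_k$), and then the exponents of the $x_k$ determine the partial sums $i_1+\cdots+i_k+b(I)$, hence all the $i_k$, since $b(I)=\sum_k\epsilon_k$ is already known. Consequently, writing each $q^I$ in the monomial basis of $H^*B\V_n$ and ordering by the term order introduced before Lemma~\ref{lm:expand Dickson-Mui}, the matrix expressing the $q^I$ in terms of monomials is ``triangular'' with distinct leading monomials, so the family $\{q^I\}$ is linearly independent over $\fp$.

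For spanning, I would argue that $\{q^I\}$ already exhausts a known $\fp$-basis of $(H^*B\V_n)^{GL_n}$ by a dimension/bijection count in each degree. By Theorem~\ref{thm:invariant of G}(ii), $(H^*B\V_n)^{GL_n}=D[n]\oplus\bigoplus_{k=1}^n\bigoplus_{0\leq i_1<\cdots<i_k\leq n-1}R_{n;i_1,\dots,i_k}\,D[n]$, so an $\fp$-basis is given by the elements $q_{n,0}^{a_0}\cdots q_{n,n-1}^{a_{n-1}}$ (the $D[n]$ summand) together with $R_{n;i_1,\dots,i_k}\,q_{n,0}^{a_0}\cdots q_{n,n-1}^{a_{n-1}}$ with $a_j\geq 0$ and $0\leq i_1<\cdots<i_k\leq n-1$. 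Using relation (iii), $R_{n;i_1}\cdots R_{n;i_k}=(-1)^{k(k-1)/2}R_{n;i_1,\dots,i_k}q_{n,0}^{k-1}$, every monomial $R^{\epsilon_1}_{n;0}q_{n,0}^{i_1}\cdots R^{\epsilon_n}_{n;n-1}q_{n,n-1}^{i_n}$ in which $b(I)=\sum\epsilon_k$ of the $R$'s appear can be rewritten (when the relevant Dickson exponent is large enough — precisely the condition $i_1-m(I)+b(I)\geq 0$, which guarantees $q_{n,0}$ appears to at least the power $b(I)-1$ after collecting) as $\pm R_{n;i_1,\dots,i_{b(I)}}$ times a Dickson monomial, i.e.\ as one of the standard basis elements up to sign. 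Conversely, every standard basis element is of this form for a unique admissible $I$ with $i_1-m(I)+b(I)\geq 0$: given $R_{n;i_1,\dots,i_k}q_{n,0}^{a_0}\cdots q_{n,n-1}^{a_{n-1}}$, distribute the $R$'s as $R_{n;i_1},\dots,R_{n;i_k}$, set $\epsilon_j=1$ exactly at positions $i_1,\dots,i_k$, absorb the factor $q_{n,0}^{k-1}$ coming from relation (iii), and read off $i_1,\dots,i_n$ (with $i_1=a_0-(k-1)$, which may be negative but satisfies $i_1-m(I)+b(I)=a_0\geq 0$). This sets up an explicit bijection between $\{q^I\}$ and the standard $D[n]$-module basis, degree by degree, proving the $q^I$ span.

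I expect the main obstacle to be the bookkeeping in the spanning argument: making the correspondence between ``$b(I)$ many $R$-factors interleaved with Dickson powers'' and ``$R_{n;i_1,\dots,i_k}$ times a single Dickson monomial'' genuinely \emph{bijective} rather than merely surjective, and verifying that the side condition $i_1-m(I)+b(I)\geq 0$ (equivalently $m(I)\leq i_1+b(I)$, and automatically $m(I)=1$ when any $\epsilon_k=1$) matches exactly the requirement that the Dickson exponent $a_0=i_1+b(I)-[b(I)\geq 1]$ be non-negative, with no double counting. One has to be a little careful that relation~(iii) is applied with the $R$'s in increasing-index order and that $R_{n;i}^2=0$ forces all $\epsilon_k\leq 1$, so the admissibility constraints on $I$ are precisely the right ones. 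Once that combinatorial dictionary is in place, linear independence from the leading-term analysis plus the bijective count with a known basis closes the argument; alternatively, linear independence alone together with the fact that $\{q^I\}$ visibly lies in $(H^*B\V_n)^{GL_n}$ and surjects onto the standard generators already suffices.
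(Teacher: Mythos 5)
Your proposal is correct and follows essentially the same route as the paper: linear independence via the leading-term computation of Lemma~\ref{lm:expand basis of B} (distinct leading monomials give triangularity), and spanning from Theorem~\ref{thm:invariant of G}(ii)--(iii), rewriting each standard $D[n]$-module basis element $R_{n;j_1,\dots,j_k}q_{n,0}^{a_0}\cdots q_{n,n-1}^{a_{n-1}}$ as $\pm q^I$ with $i_1=a_0-(k-1)$, which is exactly where the condition $i_1-m(I)+b(I)\geq 0$ comes from. You simply spell out the combinatorial dictionary that the paper leaves implicit, so no further changes are needed.
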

\begin{proof}
Theorem \ref{thm:invariant of G} implies that the set $\{q^I=R^{\epsilon_1}_{n;0}q_{n,0}^{i_1}\cdots R^{\epsilon_n}_{n;n-1}q_{n,n-1}^{i_n}:i_1-m(I)+b(I)\geq0\}$ spans $(H^*B\V_n)^{GL_n}$. By Lemma \ref{lm:expand basis of B}, this set is linearly independent.
\end{proof}

Keeping the above notation, we have the following result for $\mathscr{B}[n]$.

\begin{pro}\label{pro:basis of B(p)} For any $n\geq 1$,
the set of elements $q^I=R^{\epsilon_1}_{n;0}q_{n,0}^{i_1}\cdots R^{\epsilon_n}_{n;n-1}q_{n,n-1}^{i_n}$ for $i_1\in\mathbb{Z}, i_s\geq0, 2\leq s\leq n ,\epsilon_s\in \{0,1\}$ and $2i_1+b(I)\geq0,$
forms an additive basis for $\mathscr{B}[n]$. 
\end{pro}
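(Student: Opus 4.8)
The plan is to verify the three defining properties of a basis in turn: that each proposed monomial $q^I$ (with $2i_1+b(I)\ge 0$) lies in $\mathscr{B}[n]$, that these monomials span $\mathscr{B}[n]$, and that they are linearly independent. Linear independence comes essentially for free: one checks that $2i_1+b(I)\ge 0$ implies $i_1-m(I)+b(I)\ge 0$ --- for $b(I)=0$ both inequalities say $i_1\ge 0$, while for $b(I)\ge 1$ one has $m(I)=1$ and $i_1\ge-\lfloor b(I)/2\rfloor\ge 1-b(I)$ --- so the proposed set is a subset of the basis of $(H^*B\V_n)^{GL_n}$ furnished by Proposition \ref{pro:basis of Dickson-Mui algebra}; alternatively one may invoke the distinctness of the leading terms $LT(q^I)$ from Lemma \ref{lm:expand basis of B}. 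So the real content is membership and spanning, both of which I would deduce from a single computation with the M\`ui relations.

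First I would record the following consequence of Theorem \ref{thm:invariant of G}(iii). Starting from the case $k=2$, namely $R_{n;s}R_{n;t}=-R_{n;s,t}\,q_{n,0}$ for $s<t$, one shows that a product of $a$ distinct single classes $R_{n;s}$ and $b$ distinct double classes $R_{n;s,t}$ with pairwise disjoint index sets equals $\pm R_{n;j_1,\dots,j_m}\,q_{n,0}^{\,a+b-1}$, where $m=a+2b$ and $\{j_1<\dots<j_m\}$ is the union of all the indices involved. The argument is: multiply through by $q_{n,0}^{\,b}$, trade each double factor for its pair of singles, apply Theorem \ref{thm:invariant of G}(iii) to the resulting $m$ singles (reordered by anticommutativity), and cancel the factor $q_{n,0}^{\,b}$ --- legitimate because $(H^*B\V_n)^{GL_n}$ is free over the polynomial ring $D[n]$ by Theorem \ref{thm:invariant of G}(ii), so $q_{n,0}$ is a non-zero-divisor. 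The same device, together with $R_{n;s}^2=0$, shows that any product of $R$-factors two of which share an index vanishes; in particular $R_{n;s,t}^2=0$ and $R_{n;s}R_{n;s,t}=0$.

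Granting this, spanning follows by expanding a general element of $\mathscr{B}[n]$ as a sum of monomials in the generators $q_{n,i}$, $R_{n;s}$, $R_{n;s,t}$, discarding those that vanish, and observing that in each surviving monomial every $R$-factor occurs to the first power with disjoint indices; by the computation above it equals $\pm R_{n;j_1,\dots,j_m}\,q_{n,0}^{c_0}q_{n,1}^{c_1}\cdots q_{n,n-1}^{c_{n-1}}$ with $c_0=a_0+a+b-1$ when $m\ge 1$ (and $c_0=a_0$ when $m=0$), where $a_0,\dots,a_{n-1}$ are the exponents of the $q$-generators and $a,b$ count the single and double $R$-factors. In the notation of the statement this is $\pm q^I$ with $b(I)=m$, $i_s=c_{s-1}$ for $s\ge 2$, and $i_1=c_0-(m-1)=a_0-b$; hence $i_1-m(I)+b(I)=c_0\ge 0$ (so $q^I$ lies in the Dickson--M\`ui basis) and $2i_1+b(I)=2a_0+a\ge 0$, as required. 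For membership I would run the converse: given $q^I$, set $m=b(I)$ and $c_0=i_1+m-1$, note that $2i_1+b(I)\ge 0$ is equivalent to $c_0\ge\lceil m/2\rceil-1$, and then write $q^I$ (up to sign) as the product of $b=\max(0,\,m-c_0-1)$ double classes (this $b$ satisfies $0\le b\le\lfloor m/2\rfloor$), $a=m-2b$ single classes on the indices prescribed by $I$, and $c_0-(a+b-1)\ge 0$ extra copies of $q_{n,0}$, invoking again the computation of the previous paragraph.

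The step I expect to be the main obstacle is precisely this middle one: pinning down how many factors of $q_{n,0}$ are forced to appear when a monomial in the generators is rewritten in the Dickson--M\`ui basis, and recognizing that replacing pairs of single classes by double classes whenever possible is the optimal bookkeeping, attained exactly at $c_0=\lceil m/2\rceil-1$, i.e.\ at $2i_1+b(I)=0$. The remaining ingredients --- the reductions using $R_{n;s}^2=0$ and disjointness, and the elementary index-set arithmetic --- are routine.
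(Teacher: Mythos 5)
Your proposal is correct and follows essentially the same route as the paper, which disposes of the statement in two lines: linear independence because the elements form a subset of the basis of $(H^*B\V_n)^{GL_n}$ from Proposition \ref{pro:basis of Dickson-Mui algebra}, and spanning via the relation $R_{n;s,t}=R_{n;s}R_{n;t}q_{n,0}^{-1}$ coming from Theorem \ref{thm:invariant of G}(iii). Your argument simply makes explicit the bookkeeping (vanishing of products with repeated indices, the exponent count for $q_{n,0}$, and the membership direction) that the paper leaves to the reader.
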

\begin{proof}
It is clear from Proposition \ref{pro:basis of Dickson-Mui algebra} that this set is linearly independent. It spans $\mathscr{B}[n]$ because $R_{n;s,t}=R_{n;s}R_{n;t}q_{n,0}^{-1}$ for all pair $0\leq s<t\leq n-1$.
\end{proof}
\begin{cor}
For any $n\geq 1$, as an $\fp$-vector space, the cokernel of the restriction map $H^*B\Sigma_{p^n}\to (H^*B\V_n)^{GL_n}$ has a basis consisting of all elements $[R^{\epsilon_1}_{n;0}q_{n,0}^{i_1}\cdots R^{\epsilon_n}_{n;n-1}q_{n,n-1}^{i_n}]$ for $i_1\in\mathbb{Z}, i_s\geq0, 2\leq s\leq n ,\epsilon_s\in \{0,1\}$ and $m(I)-b(I)\leq i_1<-b(I)/2$.
\end{cor}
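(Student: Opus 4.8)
The plan is to combine the basis of $(H^*B\V_n)^{GL_n}$ from Proposition~\ref{pro:basis of Dickson-Mui algebra} with the basis of the subalgebra $\mathscr{B}[n]$ from Proposition~\ref{pro:basis of B(p)}, and to exploit the fact that the image of the restriction map $H^*B\Sigma_{p^n}\to(H^*B\V_n)^{GL_n}$ is precisely $\mathscr{B}[n]$ (this is M\`ui's theorem, recalled just before Theorem~\ref{thm:dual}). Thus the cokernel of the restriction map is canonically $(H^*B\V_n)^{GL_n}/\mathscr{B}[n]$, and since both spaces carry compatible monomial bases indexed by strings $I=(\epsilon_1,i_1,\dots,\epsilon_n,i_n)$ of the same form, the quotient has a basis given by the classes $[q^I]$ of those basis elements of $(H^*B\V_n)^{GL_n}$ that do \emph{not} already lie in (the span of the basis of) $\mathscr{B}[n]$.

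First I would record that by Proposition~\ref{pro:basis of Dickson-Mui algebra} the elements $q^I$ with $i_s\geq0$ for $2\leq s\leq n$, $\epsilon_s\in\{0,1\}$, and $i_1-m(I)+b(I)\geq0$ form an $\fp$-basis of $(H^*B\V_n)^{GL_n}$, while by Proposition~\ref{pro:basis of B(p)} the subcollection with the stronger constraint $2i_1+b(I)\geq0$ forms a basis of the subspace $\mathscr{B}[n]$. Since the larger basis restricts on $\mathscr{B}[n]$ to the smaller one, the images in the quotient of the remaining basis vectors—those $q^I$ with $i_1-m(I)+b(I)\geq0$ but $2i_1+b(I)<0$—form a basis of the cokernel. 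The inequality $2i_1+b(I)<0$ is equivalent to $i_1<-b(I)/2$, and $i_1-m(I)+b(I)\geq0$ is equivalent to $i_1\geq m(I)-b(I)$; together these give exactly the stated range $m(I)-b(I)\leq i_1<-b(I)/2$.

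The only point requiring a moment's care is the compatibility of the two bases, i.e.\ that the Proposition~\ref{pro:basis of B(p)} basis of $\mathscr{B}[n]$ is literally a subset of the Proposition~\ref{pro:basis of Dickson-Mui algebra} basis of the ambient invariant ring. This is immediate from the leading-term computation in Lemma~\ref{lm:expand basis of B}: whenever $i_1-m(I)+b(I)\geq0$ the monomial $q^I$ has a well-defined leading term depending injectively on $I$, so distinct admissible $I$'s give linearly independent elements in both spaces simultaneously, and the inclusion $\mathscr{B}[n]\hookrightarrow(H^*B\V_n)^{GL_n}$ carries basis to subset-of-basis. (One should also note $m(I)-b(I)<-b(I)/2$ is automatically consistent since $m(I)\leq b(I)$ forces $m(I)-b(I)\leq 0$, while $-b(I)/2\leq 0$ as well, so the range is nonempty exactly when $b(I)\geq 1$ and $-b(I)/2 > m(I)-b(I)$, i.e.\ $b(I) > 2m(I)$—but recording the cokernel basis does not require analyzing emptiness.) I do not anticipate a genuine obstacle here; the ``hard part'' is purely bookkeeping—translating the defining inequalities of the two bases into the single two-sided inequality on $i_1$—and this is routine.
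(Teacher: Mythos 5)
Your argument is correct and is essentially the paper's (implicit) one: the paper states this corollary without proof precisely because, as you say, the cokernel is $(H^*B\V_n)^{GL_n}/\mathscr{B}[n]$ by M\`ui's identification of the image of restriction, and the monomial basis of Proposition \ref{pro:basis of B(p)} is a subset of that of Proposition \ref{pro:basis of Dickson-Mui algebra}, so the remaining monomials, indexed by $m(I)-b(I)\leq i_1<-b(I)/2$, descend to a basis of the quotient. Your bookkeeping translating the two inequalities is exactly what is needed, so nothing is missing.
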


For $k\geq0$,  the subspace of $\mathscr{B}[n]$ generated by $\{R^{\epsilon_1}_{n;0}q_{n,0}^{i_1}\cdots R^{\epsilon_n}_{n;n-1}q_{n,n-1}^{i_n}:2i_1+b(I)\geq k\}$ is a subalgebra of $\mathscr{B}[n]$, which is denoted by $\mathscr{B}_k[n]$. It is immediate that $\mathscr{B}_0[n]=\mathscr{B}[n]$.

We now turn to homology. Let $u_i\in H_1B\V_n$ be the dual of $e_i$ and let $v_i\in H_{2}B\V_n$ be the dual of $x_i$. Then $H_*B\V_n$ is the tensor product of the exterior algebra generated by $u_i$'s and the divided power algebra generated by $v_i$'s. We denote by $v_i^{[t]}$ the $t$-th divided power of $v_i$.  Since $R[n]$ is isomorphic to $\mathscr{B}[n]^*$, $R[n]$ is considered the quotient algebra of $(H_*B\V_n)_{GL_n}$. The following theorem provides an additive basis for $\mathscr{B}[n]^*$ and hence for $R[n]$.
\begin{thm}[({Compare with \cite[Theorem 3.10]{Tur97}})]\label{thm:basis in coinvariant}
For $k\geq0$, the set of all elements 
\[
[u_1^{\epsilon_1}v_1^{[(p-1)(i_1+b(I))-\epsilon_1]}\cdots u_n^{\epsilon_n}v_n^{[p^{n-1}(p-1)(i_1+\cdots+i_n+b(I))-p^{n-1}\epsilon_n]}],
\]
for $i_1\in\mathbb{Z}, i_s\geq0, 2\leq s\leq n ,\epsilon_s\in \{0,1\}$, and $2i_1+b(I)\geq k$ provides an additive basis for $\mathscr{B}_k[n]^*$, the dual of $\mathscr{B}_k[n]$.
\end{thm}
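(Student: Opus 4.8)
The plan is to dualize the basis for $\mathscr{B}_k[n]$ obtained in Proposition \ref{pro:basis of B(p)} (together with the description of the leading terms in Lemma \ref{lm:expand basis of B}). The key observation is a triangularity statement: if we order the monomials $q^I = R^{\epsilon_1}_{n;0}q_{n,0}^{i_1}\cdots R^{\epsilon_n}_{n;n-1}q_{n,n-1}^{i_n}$ with $2i_1+b(I)\geq k$ by the order on tuples $I$ introduced before Lemma \ref{lm:expand Dickson-Mui}, then Lemma \ref{lm:expand basis of B} says that $q^I$ equals its leading monomial $e_1^{\epsilon_1}x_1^{(p-1)(i_1+b(I))-\epsilon_1}\cdots e^{\epsilon_n}_nx_n^{p^{n-1}(p-1)(i_1+\cdots+i_n+b(I))-p^{n-1}\epsilon_n}$ plus strictly greater monomials (up to the sign $(-1)^{\epsilon_2+\cdots+(n-1)\epsilon_n}$). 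Thus the matrix expressing the $q^I$ in terms of the monomial basis $\{e^S x^T\}$ of $H^*B\V_n$ is "lower unitriangular" with respect to this order. Dualizing, the dual basis vectors to the $q^I$ inside $\mathscr{B}_k[n]^*$ are detected, modulo strictly greater monomial-duals, by the dual $u_1^{\epsilon_1}v_1^{[(p-1)(i_1+b(I))-\epsilon_1]}\cdots u_n^{\epsilon_n}v_n^{[p^{n-1}(p-1)(i_1+\cdots+i_n+b(I))-p^{n-1}\epsilon_n]}$, up to the same sign; hence the images of these monomials in $\mathscr{B}_k[n]^* = (H_*B\V_n)/\mathrm{Ann}(\mathscr{B}_k[n])$ form a basis.

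In more detail, I would argue as follows. First recall the pairing: $H_*B\V_n$ pairs with $H^*B\V_n$ so that the monomial basis $\{u^S v^{[T]}\}$ is dual to $\{e^S x^T\}$ (with the divided-power normalization chosen precisely so that $v_i^{[t]}$ is dual to $x_i^t$), and $\mathscr{B}_k[n]^*$ is the quotient of $H_*B\V_n$ by the annihilator of the subspace $\mathscr{B}_k[n]\subseteq H^*B\V_n$; equivalently it is the image of $H_*B\V_n$ under the dual of the inclusion $\mathscr{B}_k[n]\hookrightarrow H^*B\V_n$. Fix a degree $d$. By Proposition \ref{pro:basis of B(p)}, the monomials $q^I$ with $2i_1+b(I)\geq k$ and total degree $d$ form a basis of $\mathscr{B}_k[n]$ in degree $d$, so $\dim_{\fp}\mathscr{B}_k[n]^*$ in degree $d$ equals the number of such $I$. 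It therefore suffices to show the proposed set of classes in degree $d$ is linearly independent in $\mathscr{B}_k[n]^*$. Suppose a linear combination $\sum_I c_I\,[u^{\epsilon}v^{[\cdots]}]$ vanishes in $\mathscr{B}_k[n]^*$; this means the corresponding element $\omega=\sum_I c_I\, u^{\epsilon}v^{[\cdots]}\in H_*B\V_n$ annihilates all of $\mathscr{B}_k[n]$. Pick the $\fp$-minimal index $I_0$ (in the tuple order) with $c_{I_0}\neq 0$ and pair $\omega$ with $q^{I_0}$. By Lemma \ref{lm:expand basis of B}, $q^{I_0}$ has leading monomial indexed by $I_0$ and all other monomials indexed by strictly greater $I$; pairing against $\omega$, every term $c_I\langle u^{\epsilon}v^{[\cdots]}, \text{(monomial of }q^{I_0})\rangle$ with $I \ne I_0$ that could be nonzero forces the monomial of $q^{I_0}$ to have index $I$, which is either $I_0$ (excluded) or $>I_0$ and hence, by minimality of $I_0$, has $c_I = 0$ — wait, that is not quite the right bookkeeping; the clean way is: $\langle \omega, q^{I_0}\rangle = \sum_I c_I \langle u^\epsilon v^{[\cdots]}_{(I)}, q^{I_0}\rangle$, and $\langle u^\epsilon v^{[\cdots]}_{(I)}, e^\epsilon x^{[\cdots]}_{(J)}\rangle = \delta_{IJ}$, so only monomials of $q^{I_0}$ with index $I$ among the support of $\omega$ contribute; the leading one, index $I_0$, contributes $c_{I_0}\cdot(\pm 1)$, and any other contributing monomial has index $I>I_0$ in the support, but such $I$ could a priori appear. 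To rule that out one runs the argument the other direction: pair the dual monomial indexed by the $\fp$-\emph{minimal} $I$ in the support of $\omega$ against $q^I$, so that on the $H^*$ side only the leading term contributes; this gives $\pm c_{I}=\langle \omega, q^{I}\rangle=0$, a contradiction. Concretely: let $I_0$ be minimal in the support, pair $\omega$ with $q^{I_0}$; monomials of $q^{I_0}$ have indices $\geq I_0$, and among indices in $\mathrm{supp}(\omega)$ the only one that is $\leq$ everything is $I_0$ itself, but larger ones in $\mathrm{supp}(\omega)$ can still pair nontrivially. The genuinely clean formulation uses that the change-of-basis matrix $q^I=\sum_{J\ge I}a_{IJ}\,e^{?}x^{?}_{(J)}$ with $a_{II}=\pm1$ is unitriangular, hence invertible, so $\{q^I\}$ and $\{e^?x^?_{(I)}\}$ span the same subspace; dualizing, the functionals $\{(q^I)^\vee\}$ and the images of $\{u^? v^{[?]}_{(I)}\}$ in $\mathscr{B}_k[n]^*$ are related by the inverse-transpose, also unitriangular, hence the latter is a basis. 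I would present it in this last form.

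The main obstacle, and the point requiring care, is the bookkeeping of the order and the verification that Lemma \ref{lm:expand basis of B} genuinely gives unitriangularity over the \emph{entire} index set $\{I : 2i_1+b(I)\ge k\}$ in a fixed degree — in particular that distinct admissible $I$ give distinct leading monomials (so that the "diagonal" is well-defined) and that the constraint $2i_1+b(I)\ge k$ on the homology side matches exactly the constraint defining $\mathscr{B}_k[n]$, including the edge cases where $i_1$ is negative and some divided-power exponent $(p-1)(i_1+b(I))-\epsilon_1$ could threaten to be negative — one must check $2i_1+b(I)\ge k\ge 0$ together with $\epsilon_s,i_s$ in range forces all the divided-power exponents appearing to be $\ge 0$, so the proposed monomials actually lie in $H_*B\V_n$. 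Once the dictionary $q^I \leftrightarrow u^\epsilon v^{[\cdots]}$ is pinned down degree by degree and the leading-term injectivity is checked (which follows from the explicit exponents: the exponent of $x_n$ determines $i_1+\cdots+i_n+b(I)$, then that of $x_{n-1}$ determines $i_1+\cdots+i_{n-1}+b(I)$, and so on, while the $e_s$-part records the $\epsilon_s$), the rest is the formal unitriangular-duality argument sketched above. I also note that the comparison with \cite[Theorem 3.10]{Tur97} is exactly the $p=2$ specialization, where the divided-power and exterior parts collapse as in the discussion following Theorem \ref{thm:invariant of G}.
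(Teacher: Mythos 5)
Your argument is correct and is essentially the paper's own proof: the paper likewise takes the $q^I$-basis of $\mathscr{B}_k[n]$ from Proposition \ref{pro:basis of B(p)}, uses the leading-term statement of Lemma \ref{lm:expand basis of B} to see that the pairing of $q^I$ against the (signed) monomials $u_1^{\epsilon_1}v_1^{[\cdot]}\cdots u_n^{\epsilon_n}v_n^{[\cdot]}$ is unitriangular for the given order, and concludes that the images of these monomials give the dual basis of $\mathscr{B}_k[n]^*$. The only thing to drop is your parenthetical claim that $\{q^I\}$ and their leading monomials ``span the same subspace'' of $H^*B\V_n$ (false in general, since the higher terms of $q^I$ need not be leading monomials of other $q^J$); it is also unnecessary, because the unitriangularity of the square matrix $\bigl(\langle q^I,\,u_1^{\omega_1}v_1^{[\cdot]}\cdots u_n^{\omega_n}v_n^{[\cdot]}\rangle\bigr)$ combined with your dimension count already forces the restricted functionals to be a basis, exactly as in the paper.
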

\begin{proof}
Denote 
\begin{align*}
\mathcal{Q}(\epsilon_1,i_1,\dots,\epsilon_n,i_n)&=\\
(-1)^{\epsilon_2+\cdots+(n-1)\epsilon_n}&u_1^{\epsilon_1}v_1^{[(p-1)(i_1+b(I))-\epsilon_1]}\cdots u^{\epsilon_n}_nv_n^{[p^{n-1}(p-1)(i_1+\cdots+i_n+b(I))-p^{n-1}\epsilon_n]}.
\end{align*}

From Lemma \ref{lm:expand basis of B}, we see that
\begin{equation*}
\begin{split}
\left<R_{n;0}^{\epsilon_1}q_{n,0}^{i_1}\cdots R_{n;n-1}^{\epsilon_n}q_{n,n-1}^{i_n},\mathcal{Q}(\omega_1,s_1,\dots,\omega_n,s_n)\right>\quad\quad \quad &\\
=
\left\{\begin{array}{cc}
0,& (\omega_1,s_1,\dots,\omega_n,s_n)<(\epsilon_1,i_1,\dots,\epsilon_n,i_n);\\
1,&(\omega_1,s_1,\dots,\omega_n,s_n)=(\epsilon_1,i_1,\dots,\epsilon_n,i_n).
\end{array}\right.&
\end{split}
\end{equation*}
Therefore, the set of all $[\mathcal{Q}(\epsilon_1,i_1,\dots,\epsilon_n,i_n)]$ satisfying the condition in the theorem provides a basis of $\mathscr{B}_k[n]^*$.

Moreover, since $\mathscr{B}_k[n]^*$ is a quotient algebra of $(H_*B\V_n)_{GL_n}$, 
\begin{align*}
&[\mathcal{Q}(\epsilon_1,i_1,\dots,\epsilon_n,i_n)]\\
&\quad=[u_1^{\epsilon_1}v_1^{[(p-1)(i_1+b(I))-\epsilon_1]}\cdots u_n^{\epsilon_n}v_n^{[p^{n-1}(p-1)(i_1+\cdots+i_n+b(I))-p^{n-1}\epsilon_n]}].
\end{align*}
Hence, we have the assertion of the theorem.
\end{proof}

Note that when $k=0$, the basis in Theorem \ref{thm:basis in coinvariant} is not the dual basis discussed in Proposition \ref{pro:basis of B(p)}. For $p=2$, Turner \cite{Tur97} constructed an analogous basis of $\mathscr{B}[n]^*$, which is considered a basis of $R[n]$. In \cite{Cho2011}, we work at $p=2$ and constructed a new basis for $R[n]$ and pointed out the relationship between these bases. We believe that the method used can be employed to construct an analogous basis for $R[n]$ for $p$ odd.

The following proposition is similar to Theorem \ref{thm:basis in coinvariant}. The proof is thus left to the interested reader. 

\begin{pro}[({Compare with \cite[Theorem 3.10]{Tur97}})]\label{pro:basis of H_GL}
For $n\geq 1$, the set of all elements
\[
[u_1^{\epsilon_1}v_1^{[(p-1)(i_1+b(I))-\epsilon_1]}\cdots u_n^{\epsilon_n}v_n^{[p^{n-1}(p-1)(i_1+\cdots+i_n+b(I))-p^{n-1}\epsilon_n]}],
\]
for $i_1\in\mathbb{Z}, i_s\geq0, 2\leq s\leq n ,\epsilon_s\in \{0,1\}$, and $i_1+b(I)-m(I)\geq0$ provides an additive basis for $(H_*B\V_n)_{GL_n}$.
\end{pro}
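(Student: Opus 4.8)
The plan is to follow the proof of Theorem \ref{thm:basis in coinvariant} almost verbatim, but working with the full coinvariant module $(H_*B\V_n)_{GL_n}$ in place of its quotient $\mathscr{B}_k[n]^*$, and invoking Proposition \ref{pro:basis of Dickson-Mui algebra} in place of Proposition \ref{pro:basis of B(p)}.

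First I would set up the duality. In each homological degree $d$ the Kronecker pairing $H^dB\V_n\otimes H_dB\V_n\to\fp$ is perfect and $GL_n$-equivariant for the natural $GL_n$-actions on homology and cohomology. For any finite-dimensional $\fp[GL_n]$-module $W$ one has the canonical identification $(W^*)^{GL_n}=(W_{GL_n})^*$: a $GL_n$-invariant linear functional on $W$ is exactly a functional killing every $w-gw$, that is, a functional on $W_{GL_n}$. Taking $W=H_dB\V_n$ shows that $(H_dB\V_n)_{GL_n}$ is the linear dual of $(H^dB\V_n)^{GL_n}$; in particular the two have the same finite dimension in each degree. This is the exact analogue, in the present situation, of the fact (used in the proof of Theorem \ref{thm:basis in coinvariant}) that $R[n]$ is a quotient of $(H_*B\V_n)_{GL_n}$.

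Next I would bring in Proposition \ref{pro:basis of Dickson-Mui algebra}, which furnishes the $\fp$-basis $\{q^I=R^{\epsilon_1}_{n;0}q_{n,0}^{i_1}\cdots R^{\epsilon_n}_{n;n-1}q_{n,n-1}^{i_n}\}$ of $(H^*B\V_n)^{GL_n}$ indexed by the tuples $I=(\epsilon_1,i_1,\dots,\epsilon_n,i_n)$ with $i_1\in\mathbb{Z}$, $i_s\ge0$ for $2\le s\le n$, $\epsilon_s\in\{0,1\}$ and $i_1+b(I)-m(I)\ge0$. For each such $I$ let $u^I\in H_*B\V_n$ denote the element displayed in the statement of the proposition; a quick check shows that the divided-power exponents occurring there are non-negative precisely under the hypothesis $i_1+b(I)-m(I)\ge0$, so $u^I$ is well defined, and $u^I$ is the dual-basis vector of $H_*B\V_n$ attached to the monomial that Lemma \ref{lm:expand basis of B} identifies with the leading term $LT(q^I)$. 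Since every monomial occurring in $q^I$ other than $LT(q^I)$ is strictly greater in the order on tuples, pairing $q^I$ against $u^J$ yields the triangularity relations $\langle q^I,u^J\rangle=0$ for $J<I$ and $\langle q^I,u^I\rangle=(-1)^{\epsilon_2+\cdots+(n-1)\epsilon_n}$, a unit of $\fp$; this is the counterpart here of the matrix computation in the proof of Theorem \ref{thm:basis in coinvariant}.

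Finally, fixing a degree $d$, only finitely many tuples $I$ satisfy $\deg q^I=d$ (equivalently $\deg u^I=d$); ordering them compatibly with the order on tuples, the matrix $\bigl(\langle q^I,u^J\rangle\bigr)$ is triangular with units along the diagonal, hence invertible. Therefore the degree-$d$ classes $[u^J]$ pair non-degenerately with the basis $\{q^I\}$ of $(H^dB\V_n)^{GL_n}$, and since the two spaces have equal dimension, $\{[u^J]\}$ is a basis of $(H_dB\V_n)_{GL_n}$; letting $d$ vary gives the proposition. I do not anticipate a real obstacle here: the only step meriting a moment's attention is the duality at the start --- because $|GL_n|$ is divisible by $p$ one cannot average, but this is sidestepped by the elementary identification of invariant functionals on $W$ with functionals on $W_{GL_n}$ --- while the remaining points (non-negativity of the exponents, and the bookkeeping with the monomial order) are routine.
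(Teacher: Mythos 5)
Your proposal is correct and is essentially the argument the paper intends: the paper leaves this proof to the reader as being analogous to Theorem \ref{thm:basis in coinvariant}, and your combination of the invariant/coinvariant duality, the basis of $(H^*B\V_n)^{GL_n}$ from Proposition \ref{pro:basis of Dickson-Mui algebra}, and the triangularity of the Kronecker pairing coming from Lemma \ref{lm:expand basis of B} is exactly that analogue. (Only the word ``precisely'' in your non-negativity remark overstates things --- the hypothesis $i_1+b(I)-m(I)\geq0$ implies the exponents are non-negative, which is all you need, but it is not equivalent to it.)
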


Let $I_k$ be the ideal of $R$ generated by all monomials of excess less than $k$ and let $R_k$ denote the quotient algebra $R/I_k$. For $k\geq 0$, let $R_k[n]$ be the subspace of $R_k$ spanned by all monomial of length $n$. 
Madsen \cite{Mad75} and May \cite{Coh-Lad-May76} show that $R[n]^*=R_0[n]^*\cong \mathscr{B}_0[n]$. The following provides a more explicit statement.

\begin{pro}\label{pro:R_k and B_k[n]}
As algebras, $R_k[n]^*\cong \mathscr{B}_k[n]$ via the isomorphism given in Theorem \ref{thm:dual}.
\end{pro}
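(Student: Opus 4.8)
The plan is to deduce the isomorphism $R_k[n]^* \cong \mathscr{B}_k[n]$ from the case $k = 0$ — which is Theorem~\ref{thm:dual} together with the identification $R[n]^* = R_0[n]^*$ already noted — by a dimension-count argument matching the two sides degree by degree. First I would recall that $R_k[n]$ is the quotient of $R[n] = R_0[n]$ by the image of the ideal $I_k$, so that $R_k[n]$ is spanned by the admissible monomials of length $n$ whose excess is at least $k$; dually, $R_k[n]^*$ is the subspace of $R_0[n]^* = R[n]^*$ consisting of functionals vanishing on all admissible monomials of length $n$ and excess $< k$. Under the isomorphism $\Phi$ of Theorem~\ref{thm:dual}, $R[n]^* \cong \mathscr{B}[n] = \mathscr{B}_0[n]$, so it suffices to show that $\Phi$ carries this subspace of $R[n]^*$ precisely onto the subalgebra $\mathscr{B}_k[n]$ defined just before Theorem~\ref{thm:basis in coinvariant}.

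The key computational input is the pairing already established in the proof of Theorem~\ref{thm:basis in coinvariant}: the elements $\mathcal{Q}(\epsilon_1, i_1, \dots, \epsilon_n, i_n)$ form a basis of $(H_*B\V_n)_{GL_n}$ dual-triangular to the basis $q^I = R^{\epsilon_1}_{n;0}q_{n,0}^{i_1}\cdots R^{\epsilon_n}_{n;n-1}q_{n,n-1}^{i_n}$ of $(H^*B\V_n)^{GL_n}$ with respect to the ordering on tuples $I$, and in $\mathscr{B}_k[n]^*$ only those $[\mathcal{Q}(I)]$ with $2i_1 + b(I) \geq k$ survive. So I would trace through the isomorphism $\Phi$ on basis elements: the image $\Phi(q^I) \in R[n]$ is, up to sign and up to admissible monomials that are strictly greater in the relevant ordering, the admissible monomial $Q^{\varepsilon, J}$ whose leading data is read off from the exponent pattern in Lemma~\ref{lm:expand basis of B} — concretely the excess of this monomial is governed by $2i_1 + b(I)$ via the formula \eqref{eq:excess of f} for $exc(f^{\varepsilon,I})$. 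Thus the condition $2i_1 + b(I) \geq k$ that cuts out $\mathscr{B}_k[n]$ from $\mathscr{B}[n]$ (Proposition~\ref{pro:basis of B(p)}) corresponds exactly to the condition $exc \geq k$ that cuts out $R_k[n]$ from $R[n]$. Since $\Phi$ is already known to be an algebra isomorphism and $\mathscr{B}_k[n]$ is a subalgebra, it remains only to check that $\Phi$ restricts to a bijection between the two subspaces, which follows once the excess/weight correspondence on a spanning set is verified.

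The main obstacle will be pinning down precisely the correspondence between the grading "$2i_1 + b(I)$" on the Dickson–M\`ui side and "excess" on the Dyer–Lashof side, because $\Phi$ is not diagonal in the obvious monomial bases: $\Phi(q^I)$ is a single generator-monomial in $\{q_{n,i}, R_{n;i}, R_{n;s,t}\}$ but when re-expressed in the admissible basis of $R[n]$ it acquires correction terms. One must argue that these corrections do not lower the excess — equivalently, that passing to admissible form and the Adem relations \eqref{eq:adem relation 1}--\eqref{eq:adem relation 2} preserve the property of having excess $\geq k$, which is standard for the Dyer–Lashof algebra but should be cited or checked. Once that monotonicity is in hand, comparing the additive basis of $\mathscr{B}_k[n]$ from Proposition~\ref{pro:basis of B(p)} with the admissible basis of $R_k[n]$ (admissibles of length $n$ and excess $\geq k$) gives equality of the two filtered pieces under $\Phi$, and the proof concludes. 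The $A$-linearity of the isomorphism is inherited directly from Theorem~\ref{thm:dual}, since $I_k$ and $\mathscr{B}_k[n]$ are each closed under the Steenrod action.
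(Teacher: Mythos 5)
Your overall skeleton --- identify $R_k[n]^*$ with the annihilator in $R[n]^*$ of the admissible monomials of excess $<k$, then match this subspace with $\mathscr{B}_k[n]$ under $\Phi$ by triangularity plus a dimension count --- can in principle be made to work, but the step carrying all the content is asserted rather than proved, and the tools you cite do not supply it. Lemma \ref{lm:expand basis of B} and Theorem \ref{thm:basis in coinvariant} only control the pairing between the monomials $q^I$ in $(H^*B\V_n)^{GL_n}$ and the $u$--$v$ monomials of $H_*B\V_n$; they say nothing about how $\Phi^{-1}(q^I)$ pairs with the \emph{admissible} basis of $R[n]$, which is exactly what you need when you claim that the admissible ``leading term'' attached to $q^I$ has excess $2i_1+b(I)$ and that the correction terms are harmless. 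That bridge is precisely what the paper takes from May: every admissible string of non-negative excess is uniquely of the form $\sum_i t_iI_{n,i}+L_{n;e}$ with excess $2t_0+exc(L_{n;e})$, and the argument of Theorem 3.7 of \cite{Coh-Lad-May76} then shows that the monomials $\xi_{n,0}^{i_1}\cdots \xi_{n,n-1}^{i_n}(\sigma_{n;e_1,e_2}\cdots)^{\epsilon}\tau_{n;e_j}^{\epsilon'}$ with $2i_1+b(I)\geq k$ form a basis of $R_k[n]^*$; rewriting via relation (ii) of Theorem \ref{thm: relation of DL} and applying $\Phi$ finishes the proof. (One could instead extract the triangularity from Lemma \ref{lm:relation E and Q}, which is proved independently later via the Nishida relations, but you do not invoke it.) Without one of these inputs, ``the excess of this monomial is governed by $2i_1+b(I)$ via \eqref{eq:excess of f}'' is the expected numerology, not an argument.

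There are also two directional slips worth fixing. First, $\Phi$ maps $R[n]^*$ to $\mathscr{B}[n]$, so ``$\Phi(q^I)\in R[n]$'' does not parse; you mean either the functional $\Phi^{-1}(q^I)\in R[n]^*$ or the dual basis element $E_I\in R[n]\cong\mathscr{B}[n]^*$. Second, the condition that makes the annihilator argument run is not that corrections ``do not lower the excess'' after Adem reduction: what is needed is that $E_I$, expanded in the admissible basis, equals $\pm Q^{J(I)}$ with $exc(J(I))=2i_1+b(I)$ plus admissibles of \emph{strictly smaller} excess (the content of Lemma \ref{lm:relation E and Q}), equivalently that the functional $\Phi^{-1}(q^I)$ vanishes on all admissibles of excess $<2i_1+b(I)$; your ``main obstacle'' paragraph conflates these two dual pictures. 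The standard fact you propose to check --- that $I_k\cap R[n]$ is spanned by admissibles of excess $<k$, so that $R_k[n]$ has the expected basis --- is true and needed to identify $R_k[n]^*$ inside $R[n]^*$, but it is not the missing ingredient.
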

\begin{proof}
For a string of integers $e=(e_1,\dots, e_j)$ such that $1\leq e_1<\cdots<e_j\leq n$, we put 
\[
L_{n;e}=\left\{
\begin{array}{ll}
K_{n;e_1,e_2}+\cdots+K_{n;e_{j-1},e_j},&\text{if $j$ is even},\\
K_{n;e_1,e_2}+\cdots+K_{n;e_{j-2},e_{j-1}}+J_{n;e_j},&\text{if $j$ is odd,}
\end{array}
\right.
\]
and $L_{n;e}$ is the string of all zeros if $e$ is empty.
Here we mean $(\epsilon_1,i_1,\dots,\epsilon_n,i_n)+(\epsilon'_1,j_1,\dots,\epsilon'_n,j_n)$ to be the string $(\omega_1,t_1,\dots,\omega_n,t_n)$ with $t_s=i_s+j_s$ and $\omega_s=\epsilon_s+\epsilon'_s\ (\text{mod }2)$.

In \cite[p.38]{Coh-Lad-May76}, May shows that any string $I$ of non-negative excess can be uniquely expressed in the form
\[
I=\sum_{i=0}^{n-1}t_iI_{n,i}+L_{n;e},
\]
for some string $e$, and $exc(I)=2t_0+exc(L_{n;e})$. By the same argument of the proof of Theorem 3.7 in \cite[p.29]{Coh-Lad-May76}, we obtain that the set of all monomials $$\xi_{n,0}^{i_1}\cdots \xi_{n,n-1}^{i_n}(\sigma_{n;e_1,e_2}\cdots\sigma_{n;e_{j-2},e_{j-1}})^{\epsilon_1}\tau_{n;e_j}^{\epsilon_2},\quad 2i_1+\epsilon_2\geq k$$
provides an additive basis of $R_k[n]^*$.

From relation (ii) in Theorem \ref{thm: relation of DL},  we see that the above monomials can be written in the form (up to a sign)
\[
\tau_{n;0}^{\epsilon_1}\xi_{n,0}^{i_1}\cdots \tau_{n;n-1}^{\epsilon_n}\xi_{n,n-1}^{i_n}, \quad 2i_1+b(I)\geq k.
\]

It implies that the set of all monomials $\tau_{n;0}^{\epsilon_1}\xi_{n,0}^{i_1}\cdots \tau_{n;n-1}^{\epsilon_n}\xi_{n,n-1}^{i_n}, 2i_1+b(I)\geq k$ is a basis of $R_k[n]^*$.

By the definition of $\mathscr{B}_k[n]$ and Theorem \ref{thm:dual} we have the assertion of the proposition.
\end{proof}

\section{The Hopf ring structure of $\{H_*QS^k\}_{k\geq 0}$}\label{sec: Hopf ring}

In this section, we use results of the previous sections to give a complete set of relations for $\{H_*QS^k\}_{k\geq0}$ as a Hopf ring.

Let $[1]\in H_*QS^0$ be the image of non-base point generator of $H_0S^0$ under the map induced by the canonical inclusion $S^0\hookrightarrow QS^0$ and let $\sigma\in H_*QS^1$ be the image of the generator of $H_1S^1$ under the homomorphism induced by the inclusion $S^1\hookrightarrow QS^1$. Note that the element $\sigma$ is usually known as the homology suspension element because $\sigma\circ x$ is the homology suspension of $x$. Write $P[X]$ for the free graded communicative algebra over $\fp$ generated by the set $X$. We first recall the following fundamental calculation by Dyer-Lashof \cite{Dye-Las62} and May \cite{Coh-Lad-May76}.
\begin{thm}[(Dyer-Lashof \cite{Dye-Las62}, May \cite{Coh-Lad-May76})]
The mod $p$ homology of $\{QS^k\}_{k\geq0}$ is given by
\begin{align*}
H_*QS^0&=P[Q^I[1]:I \text{ admissible}, exc(I)+\epsilon_1> 0]\otimes \fp[\mathbb{Z}],\\
H_*QS^k&=P[Q^I(\sigma^{\circ k}):I \text{ admissible}, exc(I)+\epsilon_1> k], k>0.
\end{align*}
\end{thm}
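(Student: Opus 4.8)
The plan is to recall the classical argument of Dyer--Lashof and May. The first step is geometric: replace $QS^k=\colim_n\Omega^n\Sigma^n S^k$ by the free $E_\infty$-space $CS^k$ on $S^k$ (say, the little $\infty$-cubes construction), which carries a filtration $C_1S^k\subset C_2S^k\subset\cdots$ with successive quotients the extended powers $D_mS^k=(E\Sigma_m)_+\wedge_{\Sigma_m}(S^k)^{\wedge m}$. By the May approximation/group-completion theorem (building on Barratt--Priddy--Quillen and Segal) the canonical map $CS^k\to QS^k$ is a group completion, so $H_*QS^k$ is obtained from $H_*CS^k$ by inverting the monoid $\pi_0$ with respect to the loop-sum (Pontryagin) product $\star$. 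When $k\ge1$ the sphere is connected, hence $CS^k$ is connected and already group-like, so no localization is required; when $k=0$ one has $CS^0\simeq\coprod_{m\ge0}B\Sigma_m$, the monoid of components is $\mathbb N$, and the group completion amounts to inverting the generator $[1]\in H_0$, which will produce the factor $\fp[\mathbb Z]$ in place of $\fp[\mathbb N]$.

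The second step --- the one serious input, which I would cite rather than reprove --- is the Dyer--Lashof--May computation of the mod $p$ homology of the extended powers (equivalently, of the iterated wreath products of symmetric groups), assembled into an identification of $H_*CX$, for any based space $X$, with the free $A$-$R$-allowable Hopf algebra on $\tilde H_*X$. As an algebra over $\fp$ this free object is the free graded-commutative algebra on the set of all $Q^I x$ with $x$ running over a homogeneous basis of $\tilde H_*X$ and $I=(\epsilon_1,i_1,\dots,\epsilon_n,i_n)$ admissible with $exc(I)+\epsilon_1>|x|$. The inequality is exactly the statement that the outermost operation $Q^{\epsilon_1,i_1}$, applied to $Q^{I'}x$ with $I'=(\epsilon_2,i_2,\dots,\epsilon_n,i_n)$, is neither zero nor a $p$-th power --- read off from the instability relations $Q^sy=0$ for $2s<|y|$, $Q^sy=y^p$ for $2s=|y|$, and the corresponding vanishing of $\beta Q^sy$ --- while admissibility removes the redundancies among the $Q^I$ coming from the Adem relations \eqref{eq:adem relation 1}--\eqref{eq:adem relation 2}. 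Reproving this is precisely the homological analysis of $\Sigma_{p^n}$, so I would take it as known.

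Finally I would specialize to $X=S^k$. For $k\ge1$ the reduced homology $\tilde H_*S^k$ is one-dimensional, spanned by the fundamental class; in the present notation this class is $\sigma^{\circ k}$, the $k$-fold circle power of the suspension element $\sigma\in H_1QS^1$, equal to the image of the generator of $H_kS^k$ under $S^k\hookrightarrow QS^k$. Hence the free graded-commutative algebra above is exactly $P[Q^I(\sigma^{\circ k}):I\text{ admissible},\ exc(I)+\epsilon_1>k]$, and as no group completion is needed this is $H_*QS^k$. For $k=0$ the reduced homology $\tilde H_0S^0$ is spanned by $[1]$ in degree $0$; the free $A$-$R$-allowable Hopf algebra on it is $P[Q^I[1]:I\text{ admissible},\ exc(I)+\epsilon_1>0]\otimes\fp[[1]]$, the extra polynomial factor $\fp[[1]]\cong\fp[\mathbb N]$ accounting for the degree-$0$ generator, and inverting $[1]$ as in the first step turns $\fp[\mathbb N]$ into $\fp[\mathbb Z]$, giving the stated formula. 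The sole genuine obstacle here is the structure theorem for $H_*CX$; the rest is the formal bookkeeping of admissibility and excess, plus the harmless remark that group completion over a connected space is an equivalence.
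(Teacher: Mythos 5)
The paper states this theorem without proof, simply citing Dyer--Lashof and May, and your outline is precisely the standard argument from those sources: the approximation/group-completion step ($CS^k\to QS^k$, with $\fp[\mathbb{Z}]$ arising from inverting $[1]$ when $k=0$) together with the identification of $H_*CX$ as the free allowable $A$-$R$-Hopf algebra on $\tilde H_*X$, which is indeed the one serious input and is appropriately cited rather than reproved. Your bookkeeping of admissibility and of the excess condition $exc(I)+\epsilon_1>k$ (via $Q^sy=0$ for $2s<|y|$, $Q^sy=y^p$ and $\beta Q^sy=0$ for $2s=|y|$) matches the paper's conventions, so the proposal is correct and consistent with how the paper uses the result.
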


The interaction between the action of the Steenrod algebra, the Dyer-Lashof algebra and
the circle product were worked out in \cite{Coh-Lad-May76} and \cite{May1971}.
\begin{thm}[(May \cite{Coh-Lad-May76}, \cite{May1971})]\label{thm:action of DL on HQSn}
For $b,f\in H_*QS^k$,
\begin{enumerate}
\item $P^s_*(b\circ f)=\sum_{i} P^i_*(b)\circ P_*^{s-i}(f)$ and $\beta(b\circ f)=\beta(b)\circ f+(-1)^{{\rm deg}b}b\circ\beta(f)$.
\item $Q^s(b)\circ f=\sum_iQ^{s+i}(b\circ P^i_*(f))$.
\item $\beta Q^s(b)\circ f=\sum_i\beta Q^{s+i}(b\circ P^i_*(f))-\sum_i(-1)^{{\rm deg}b}Q^{s+i}(b\circ P^i_*\beta(f))$.
\end{enumerate}
\end{thm}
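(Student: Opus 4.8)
The plan is to derive all three identities from the two structures on $\{QS^k\}_{k\geq0}$ that matter here: the multiplicative pairings $\mu\colon QS^k\times QS^\ell\to QS^{k+\ell}$ inducing $\circ$, which come from the smash-product multiplication of the sphere spectrum, and the additive $E_\infty$-operad action on each space, out of which the Dyer-Lashof operations are built. These two structures are tied together by the distributivity law of the $H_\infty$-ring structure; I would take that law, together with the standard computation of the homology of extended powers, as the external inputs (cf.\ \cite{Coh-Lad-May76,May1971}).

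Part (i) needs nothing further. Since $\circ$ is induced by a map of spaces, I would apply $\mu_*$ to the dual Cartan formula $P^s_*(x\otimes y)=\sum_i P^i_*(x)\otimes P^{s-i}_*(y)$ and to the Leibniz rule $\beta(x\otimes y)=\beta(x)\otimes y+(-1)^{\deg x}\,x\otimes\beta(y)$ on $H_*(QS^k\times QS^\ell)\cong H_*QS^k\otimes H_*QS^\ell$, using naturality of the dual Steenrod operations and of the Bockstein; both identities then fall out at once.

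For part (ii) I would pass to the $p$-th extended power $D_pX:=E\Sigma_p\times_{\Sigma_p}X^p$. Let $\theta\colon D_p(QS^k)\to QS^k$ be the structure map of the additive $E_\infty$-action, and recall that, with the standard indexing, $Q^s(b)$ is, up to a nonzero scalar, the image under $\theta_*$ of a class $c\otimes b^{\otimes p}$, where $c$ comes from $H_*(B\mathbb{Z}/p)$ and $b^{\otimes p}$ is $\Sigma_p$-invariant (for $p$ odd, whatever the parity of $\deg b$). The distributivity of the multiplicative pairing over the additive $E_\infty$-action gives a homotopy-commutative square
\begin{CD}
D_p(QS^k)\times QS^\ell @>{\theta\circ\nu}>> QS^{k+\ell}\\
@V{\theta\times\mathrm{id}}VV @|\\
QS^k\times QS^\ell @>{\mu}>> QS^{k+\ell}
\end{CD}
in which $\nu$ is the composite $D_p(QS^k)\times QS^\ell\to D_p(QS^k\times QS^\ell)\xrightarrow{D_p(\mu)}D_p(QS^{k+\ell})$ whose first map spreads $f$ over the $p$ smash coordinates via the $\Sigma_p$-equivariant diagonal. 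I would chase $(c\otimes b^{\otimes p})\otimes f$ around this square. The class $\theta_*\nu_*\bigl((c\otimes b^{\otimes p})\otimes f\bigr)$ is computed by the homology of the extended power exactly as in the proof of the Nishida relations: the diagonal on $f$ introduces the iterated coproduct $\sum f^{(1)}\otimes\cdots\otimes f^{(p)}$, and recombining this $\Sigma_p$-equivariantly against $c$ (via the internal Cartan formula and the $\Sigma_p$-invariance of $b^{\otimes p}$) leaves only contributions of the form $c_i\otimes(b\circ P^i_*f)^{\otimes p}$ carrying the usual Nishida-type binomial coefficients. Applying $\theta_*$ and unwinding the indexing yields $Q^s(b)\circ f=\sum_i Q^{s+i}(b\circ P^i_*(f))$.

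Part (iii) I would deduce formally from (i) and (ii). Since $\beta Q^s=\beta\circ Q^s$ and $\deg Q^s(b)\equiv\deg b\pmod 2$, applying the homology Bockstein to identity (ii) and using the Leibniz rule of (i) on the left side gives
\[
\beta Q^s(b)\circ f+(-1)^{\deg b}\,Q^s(b)\circ\beta(f)=\sum_i\beta Q^{s+i}\bigl(b\circ P^i_*(f)\bigr);
\]
feeding the instance of (ii) with $f$ replaced by $\beta f$ into the second term on the left then produces exactly the two-sum identity for $\beta Q^s(b)\circ f$. I expect the genuine work to lie entirely in part (ii): establishing the distributivity square with all its higher coherences (where one invokes the $H_\infty$-ring structure on $\{QS^k\}_{k\geq0}$ from \cite{Coh-Lad-May76}), and carrying out the equivariant-diagonal homology computation with the correct binomial coefficients and Koszul signs. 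Everything else is routine bookkeeping.
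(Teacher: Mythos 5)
The paper itself contains no proof of this statement: it is quoted directly from May \cite{Coh-Lad-May76,May1971}, so the only meaningful comparison is with the cited source, and your outline is essentially May's original argument. Part (i) is indeed immediate from naturality of the dual Steenrod operations and of the Bockstein, since $\circ$ is induced by the composition pairing; and your deduction of (iii) from (i) and (ii) --- apply $\beta$ to (ii), use $\deg Q^s(b)\equiv\deg b \pmod 2$, then substitute the instance of (ii) with $f$ replaced by $\beta f$ --- is correct and is the cleanest way to handle that case. The genuine content is (ii), and there your sketch asserts rather than proves the key point: after pushing $(c\otimes b^{\otimes p})\otimes f$ through the distributivity square, the claim that only classes of the form $c_i\otimes(b\circ P^i_*f)^{\otimes p}$ contribute is not automatic, because the iterated coproduct of $f$ also produces non-symmetric terms, which assemble into filtration-zero transfer (orbit-sum) classes in $H_*(E\Sigma_p\times_{\Sigma_p}(QS^{k+\ell})^p)$ and would be sent by $\theta_*$ to $\star$-products that do not appear in (ii). One must check that such contributions die in positive $B\Sigma_p$-filtration (for example by pairing against transfer classes in cohomology, which is the same computation that underlies the Nishida relations), and at odd primes one must also track the standard unit scalars relating $\theta_*(e_j\otimes z^{\otimes p})$ to $\beta^{\epsilon}Q^i z$, as well as the range $2s\geq\deg b$ where the bottom operation becomes the $p$-th $\star$-power. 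Since that equivariant computation is exactly what is carried out in \cite{Coh-Lad-May76}, your proposal is acceptable as a reconstruction of the cited proof, but the ``only diagonal contributions survive'' step is where all the work lies and should be acknowledged as such rather than treated as bookkeeping.
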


In \cite{Kahn.Priddy1978a}, Kahn and Priddy constructed the transfer
\[
tr^{(1)} \colon (B\V_1)_+\rightarrow QS^0,
\]
with the property that the induced map in homology $tr^{(1)}_* \colon H_*(B\V_1)_+\rightarrow H_*QS^0$ sends $u^{\epsilon}v^{[i(p-1)-\epsilon]}$ to $(-1)^i\beta^{\epsilon}Q^i[1]$ and others to zero.  Higher transfer can also be defined.  Let $\psi \colon \Sigma_m\times\Sigma_n\rightarrow\Sigma_{mn}$ be the permutation product of symmetric groups \cite[p.50]{Madsen.Milgram1979}. The composition  
\[
I_n \colon \V_n=\V_1\times\cdots\times \V_1\hookrightarrow \Sigma_p\times\cdots\times\Sigma_p\xrightarrow{\psi\times\cdots\times\psi}\Sigma_{p^n},
\]
is the canonical inclusion, embedding $\V_n$ into $\Sigma_{p^n}$ as the unique elementary abelian $p$-group that acts transitively on the set $\V_n$ itself.  Madsen and Milgram \cite[Theorem 3.10]{Madsen.Milgram1979} proved that there exists a commutative diagram
\[
\bfig
\square(100,0)<1000,500>[B\V_n`B\Sigma_{p^n}`QS^0\times\cdots\times QS^0`QS^0;BI_n`tr^{(1)}\times\cdots\times tr^{(1)}`i`\mu]
\efig
\]
where $\mu$ is the composition product in $QS^0$. This gives rise to the transfer
\[
tr^{(n)}=\mu\circ (tr^{(1)}\times\cdots\times tr^{(1)}):B\V_n\rightarrow QS^0,
\]
so that the induced transfer in homology $tr^{(n)}_*:H_*B\V_n\rightarrow H_*QS^0$ sends the ``external product'' in $H_*B\V_n$ (with respect to the decomposition $B\V_n\simeq B\V_r\times B\V_{n-r}$) to the circle product in $H_*QS^0$. In other words, we have
\begin{align*}
tr^{(n)}_*(u_1^{\epsilon_1}&v_1^{[i_1(p-1)-\epsilon_1]}\cdots u_n^{\epsilon_n}v_n^{[i_n(p-1)-\epsilon_n]})\\
&=tr^{(1)}_*(u_1^{\epsilon_1}v_1^{[i_1(p-1)-\epsilon_1]})\circ\cdots\circ tr^{(1)}_*(u_n^{\epsilon_n}v_n^{[i_n(p-1)-\epsilon_n]}).
\end{align*}

Since $tr^{(n)}=i\circ BI_n$ and $GL_n$ is the ``Weyl group'' of the inclusion $\V_n\subset \Sigma_{p^n}$, we have an important feature of the map $tr^{(n)}_*$ is that they factor through the coinvariant of the general linear group. In other words, there exists a map $(H_*B\V_n)_{GL_n}\to H_*QS^0$ such that the diagram
\[
\bfig
\Vtriangle/>`>`<-/[H_*B\V_n`H_*QS^0`{(H_*B\V_n)}_{GL_n};tr^{(n)}_*`p`]
\efig
\]
is commutative.

Moreover, we have the following.
\begin{pro}\label{pro:factor of transfer}
The transfer $tr^{(n)}_*$ factors through $\mathscr{B}[n]^*$. In other words, there exists a map $\mathscr{B}[n]^*\to H_*QS^0$ such that the diagram
\[
\xymatrix{
H_*B\V_n\ar[rr]^{tr_*^{(n)}}\ar[dr]_{p}\ar[ddr]_{p'}&&H_*QS^0\\
&(H_*B\V_n)_{GL_n}\ar[ur]\ar[d]^{p''}\\
&\mathscr{B}[n]^*\ar[uur]
}
\]
commutes, where $p,p'$ and $p''$ are the canonical projections.
\end{pro}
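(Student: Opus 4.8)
The plan is to show that the map $tr^{(n)}_*\colon H_*B\V_n \to H_*QS^0$ annihilates the kernel of the composite projection $p'' \circ p\colon H_*B\V_n \to \mathscr{B}[n]^*$, so that it descends to the dotted arrow. Since we already know from the diagram above the proposition that $tr^{(n)}_*$ factors through $(H_*B\V_n)_{GL_n}$, it suffices to work with the induced map $(H_*B\V_n)_{GL_n} \to H_*QS^0$ and show it kills $\ker(p'')$. Recall $\mathscr{B}[n]^*$ is the quotient of $(H_*B\V_n)_{GL_n}$ dual to the inclusion $\mathscr{B}[n] \hookrightarrow (H^*B\V_n)^{GL_n}$; equivalently, $\ker(p'')$ is spanned (in the coinvariants) by those basis elements of Proposition \ref{pro:basis of H_GL} that pair trivially with every element of $\mathscr{B}[n]$, i.e. by the classes $[\mathcal{Q}(\epsilon_1,i_1,\dots,\epsilon_n,i_n)]$ with $i_1 + b(I) - m(I) \geq 0$ but $2i_1 + b(I) < 0$ (compare Proposition \ref{pro:basis of B(p)} and Theorem \ref{thm:basis in coinvariant}).

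First I would compute $tr^{(n)}_*$ on the monomial basis of $H_*B\V_n$ directly, using the multiplicativity established just above: for $I = (\epsilon_1,i_1,\dots,\epsilon_n,i_n)$,
\[
tr^{(n)}_*\big(u_1^{\epsilon_1}v_1^{[i_1(p-1)-\epsilon_1]}\cdots u_n^{\epsilon_n}v_n^{[i_n(p-1)-\epsilon_n]}\big) = (-1)^{i_1+\cdots+i_n}\,\beta^{\epsilon_1}Q^{i_1}[1]\circ\cdots\circ\beta^{\epsilon_n}Q^{i_n}[1],
\]
using $tr^{(1)}_*(u^\epsilon v^{[i(p-1)-\epsilon]}) = (-1)^i\beta^\epsilon Q^i[1]$. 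Now $\beta^\epsilon Q^i[1]$ is a non-base-point (so invertible-free) circle factor only when it is nonzero in $H_*QS^0$; the key point is that in $H_*QS^0$ one has $Q^i[1] = 0$ and $\beta Q^i[1] = 0$ whenever the excess condition of the Dyer–Lashof calculation fails for the full admissible word, and more to the point $Q^0[1] = [p]$ is a group-like element. The essential mechanism is that for a string with $2i_1 + b(I) < 0$ — after passing to the $GL_n$-coinvariant, which allows rewriting via the Dickson–Mùi relations and the identification of Theorem \ref{thm:dual} — the corresponding circle product lands in the part of $H_*QS^0$ already identified with elements of negative "total excess", hence is forced to zero by the Dyer–Lashof presentation together with the relation $Q^{\varepsilon,I}[1]=0$ for sequences failing $exc(I) + \epsilon_1 > 0$. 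I would make this precise by pairing: for any $x \in \mathscr{B}[n] \subseteq (H^*B\V_n)^{GL_n}$, the value $\langle x, tr^{(n)}_*(-)\rangle$ factors through the restriction $H^*B\Sigma_{p^n} \to (H^*B\V_n)^{GL_n}$ (since $tr^{(n)} = i \circ BI_n$ and $\mathscr{B}[n]$ is exactly the image of that restriction), so $tr^{(n)}_*$ annihilates precisely the annihilator of $\mathscr{B}[n]$ in the coinvariants, which is $\ker(p'')$.

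So the cleanest route is the duality argument of the last sentence: by Mùi's theorem $\mathscr{B}[n]$ is the image of $\mathrm{res}\colon H^*B\Sigma_{p^n} \to (H^*B\V_n)^{GL_n}$, and $BI_n\colon B\V_n \to B\Sigma_{p^n}$ induces on homology a map whose image pairs with $H^*B\Sigma_{p^n}$ through this restriction; composing with $i_*\colon H_*B\Sigma_{p^n} \to H_*QS^0$ (which by Barratt–Priddy–Quillen is injective, indeed an isomorphism onto the appropriate summand) shows $tr^{(n)}_* = i_* \circ (BI_n)_*$ has image the linear dual picture of $\mathscr{B}[n]$, hence its kernel contains $\ker(p'')$ and the factorization exists; commutativity of the two triangles is then immediate from $tr^{(n)} = i\circ BI_n$ and the definitions of $p, p', p''$. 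The main obstacle I anticipate is verifying carefully that the pairing $\langle\,\cdot\,, tr^{(n)}_*(-)\rangle$ really does factor through $\mathrm{res}$ and not merely through $BI_n^*$ onto the full invariants — i.e. that no element of $(H^*B\V_n)^{GL_n} \setminus \mathscr{B}[n]$ can detect the image of $tr^{(n)}_*$ — which is exactly the place where the distinction between $\mathscr{B}[n]$ and the full Dickson–Mùi invariant ring (noted as the "one major difference from $p=2$" in the introduction) enters, and where one must invoke Mùi's identification of $\mathscr{B}[n]$ together with naturality of the transfer with respect to $i$.
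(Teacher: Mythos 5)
Your final ``cleanest route'' is exactly the paper's proof: dualize $tr^{(n)}=i\circ BI_n$, note $(tr^{(n)})^*=(BI_n)^*\circ i^*$ has image contained in the image of $(BI_n)^*$, which is $\mathscr{B}[n]$ by M\`ui's theorem, so $tr^{(n)}_*$ annihilates the annihilator of $\mathscr{B}[n]$ and the factorization (and commutativity of the triangles) follows at once; your worry about elements of $(H^*B\V_n)^{GL_n}\setminus\mathscr{B}[n]$ is vacuous precisely because the image of $(BI_n)^*$ on all of $H^*B\Sigma_{p^n}$ is already $\mathscr{B}[n]$. The preliminary Dyer--Lashof/excess computation is unnecessary and imprecise as stated (excess-zero classes give $\star$-$p$-th powers rather than zero, and injectivity of $i_*$ is never needed), but since you discard it in favor of the duality argument, the proposal is essentially the paper's approach.
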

\begin{proof}
Since $tr_{(n)}^*=(BI_n)^*\circ i^*$, the image of $tr_{(n)}^*$ is contained in the image of the restriction $(BI_n)^*:H^*B\Sigma_{p^n}\rightarrow H^*B\V_n$. Moreover, from M\`ui \cite[Chapter 2, Theorem 6.1]{Mui75}, the image of the restriction $(BI_n)^*$ is $\mathscr{B}[n]\subset (H^*B\V_n)^{GL_n}$. Therefore, the assertion of the proposition is immediate after taking dual.
\end{proof}

For any $I=(\epsilon_1,i_1,\dots,\epsilon_n,i_n)$, with $i_1\in\mathbb{Z}$, $i_s\geq0, 2\leq s\leq n$, $\epsilon_s\in \{0,1\}$, and $i_1+b(I)-m(I)\geq 0$, let $E_{(\epsilon_1, i_1,\dots,\epsilon_n, i_n)}$ is the dual of $R^{\epsilon_1}_{n;0}q_{n,0}^{i_1}\cdots R^{\epsilon_n}_{n;n-1}q_{n,n-1}^{i_n}$ with respect to the monomials basis given in Proposition \ref{pro:basis of Dickson-Mui algebra}; and we use the same notation $E_{(\epsilon_1, i_1,\dots,\epsilon_n, i_n)}$ to denote its image under the transfer $tr^{(n)}_*$.
In particular, $E_{(\epsilon,k)}=(-1)^k\beta^\epsilon Q^k[1]$.

We have another description of the homology of $\{QS^k\}_{k\geq0}$ as follows.
\begin{thm}[({Compare with \cite[Theorem 2.7]{Eccles1997}})]\label{thm:describe though dickson coninvariant}
The mod $p$ homology of $QS^k$ is given by
\begin{align*}
H_*QS^0=P[E_{(\epsilon_1,i_1+b(I))}\circ\cdots\circ &E_{(\epsilon_n,p^{n-1}(i_1+\cdots+i_n+b(I))-\Delta_n\epsilon_n)}:\\
& \quad\quad\quad\quad n\geq 1, 2i_1+b(I)+\epsilon_1>0]\otimes \fp[\mathbb{Z}],
\end{align*}
and for $k>0$,
\begin{align*}
H_*QS^k=P[\sigma^{\circ k}\circ E_{(\epsilon_1,i_1+b(I))}\circ\cdots\circ &E_{(\epsilon_n,p^{n-1}(i_1+\cdots+i_n+b(I))-\Delta_n\epsilon_n)}:\\
& \quad\quad\quad\quad n\geq 1, 2i_1+b(I)+\epsilon_1>k],
\end{align*}
where $\Delta_s=\frac{p^{s-1}-1}{p-1}$.
\end{thm}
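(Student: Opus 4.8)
The strategy is to transport the known additive description of $H_*QS^k$ (the Dyer--Lashof presentation recalled just above) across the algebraic isomorphism constructed in Section~\ref{sec:Basis}, tracking how a change of basis on $R[n]^*$ translates into a relabelling of the circle-product monomials in $H_*QS^0$. First I would use Priddy's identification (recalled after Theorem~\ref{thm:dual}) that $R[n]$ is precisely the image of $H_*B\V_n$ in $H_*QS^0$, together with the fact that $tr^{(n)}_*$ sends the external product in $H_*B\V_n$ to the circle product in $H_*QS^0$ and factors through $\mathscr{B}[n]^*$ (Proposition~\ref{pro:factor of transfer}). Thus the set of nonzero images $tr^{(n)}_*(\text{basis of }\mathscr{B}_k[n]^*)$ is an $\fp$-basis for a well-understood subspace of $H_*QS^0$, and it suffices to match that subspace, as $n$ and the allowable range vary, against the polynomial generators $Q^I[1]$.

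The key computational input is Theorem~\ref{thm:basis in coinvariant}: the classes
\[
[u_1^{\epsilon_1}v_1^{[(p-1)(i_1+b(I))-\epsilon_1]}\cdots u_n^{\epsilon_n}v_n^{[p^{n-1}(p-1)(i_1+\cdots+i_n+b(I))-p^{n-1}\epsilon_n]}],\qquad 2i_1+b(I)\geq k,
\]
form a basis of $\mathscr{B}_k[n]^*$. Applying $tr^{(n)}_*$ and using its multiplicativity with respect to the external-to-circle product, together with the formula $tr^{(1)}_*(u^{\epsilon}v^{[i(p-1)-\epsilon]})=(-1)^i\beta^{\epsilon}Q^i[1]$, each such class maps (up to sign, absorbed into the definition of $E_{(\epsilon_1,i_1,\dots)}$) to
\[
E_{(\epsilon_1,\,i_1+b(I))}\circ\cdots\circ E_{(\epsilon_n,\,p^{n-1}(i_1+\cdots+i_n+b(I))-\Delta_n\epsilon_n)} .
\]
Here one must check the index bookkeeping: the $s$-th slot has divided-power exponent $p^{s-1}(p-1)(i_1+\cdots+i_s+b(I))-p^{s-1}\epsilon_s$, so dividing by $(p-1)$ and accounting for the $\epsilon_s$ correction produces the Dyer--Lashof index $p^{s-1}(i_1+\cdots+i_s+b(I))-\Delta_s\epsilon_s$ with $\Delta_s=(p^{s-1}-1)/(p-1)$ exactly as claimed. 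The condition $2i_1+b(I)+\epsilon_1>0$ (resp. $>k$) is the excess condition $exc+\epsilon_1>0$ translated under this reindexing, matching the admissibility-plus-excess constraint in the Dyer--Lashof presentation.

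It then remains to see that these circle monomials are algebraically independent and generate a polynomial algebra, i.e. that no new relations are introduced. For this I would invoke that $H_*QS^0$ is already known to be $P[Q^I[1]:\ I\text{ admissible},\ exc(I)+\epsilon_1>0]\otimes\fp[\mathbb{Z}]$ and that, by Proposition~\ref{pro:R_k and B_k[n]}, the change of basis from the admissible monomials to the $E$-monomials inside each $R_k[n]^*$ is a triangular (hence invertible) linear transformation with respect to the ordering on strings from Section~\ref{sec:Basis}; the leading-term computations of Lemmas~\ref{lm:expand Dickson-Mui} and~\ref{lm:expand basis of B} guarantee triangularity. Therefore the $E$-monomials are obtained from the $Q^I[1]$ by an invertible change of polynomial generators, and the stated presentation follows; the case $k>0$ is identical after circle-multiplying everything by $\sigma^{\circ k}$ and shifting the excess bound, using that $\sigma\circ(-)$ is injective on the relevant subspace. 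The main obstacle is the precise verification that the triangular change of basis on $\mathscr{B}_k[n]^*$ is compatible, slot by slot, with the reindexing of Dyer--Lashof operations — that is, pinning down the exponents and the excess inequality simultaneously for all $n$ — rather than any conceptual difficulty; once the bookkeeping in Theorem~\ref{thm:basis in coinvariant} and Proposition~\ref{pro:R_k and B_k[n]} is in hand the rest is formal.
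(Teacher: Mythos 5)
Your overall skeleton (transfer multiplicativity, the basis of $\mathscr{B}_k[n]^*$ from Theorem \ref{thm:basis in coinvariant}, and the exponent bookkeeping producing the indices $p^{s-1}(i_1+\cdots+i_s+b(I))-\Delta_s\epsilon_s$) matches the paper, and the identification of the transfer images with the circle monomials is correct. But there is a genuine gap at the step you call ``formal'': the claim that the passage from the admissible generators $Q^I[1]$ (resp.\ $Q^I(\sigma^{\circ k})$) to the $E$-circle-monomials is a triangular, hence invertible, change of \emph{polynomial generators}. The results you cite do not give this. Lemmas \ref{lm:expand Dickson-Mui} and \ref{lm:expand basis of B} establish triangularity of the pairing between the monomials $R_{n;0}^{\epsilon_1}q_{n,0}^{i_1}\cdots R_{n;n-1}^{\epsilon_n}q_{n,n-1}^{i_n}$ and the classes $[u_1^{\epsilon_1}v_1^{[\cdot]}\cdots]$, i.e.\ between the $\mathscr{B}[n]$-monomial basis (equivalently the $\tau\xi$-monomial basis of $R[n]^*$) and the $E$-classes; they say nothing about how the $E$-classes pair against the \emph{admissible} monomial basis of $R[n]$, and Proposition \ref{pro:R_k and B_k[n]} only matches the excess filtration with the subalgebras $\mathscr{B}_k[n]$, not the two bases. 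Priddy's identification gives you injectivity of $\mathscr{B}[n]^*\to H_*QS^0$, hence that your $E$-monomials with $2i_1+b(I)\geq 0$ span the same subspace as the admissible $Q^I[1]$ of length $n$ with $exc\geq 0$; but the theorem requires more, namely that the $E$-monomials with $2i_1+b(I)+\epsilon_1>k$ stay linearly independent and spanning in the $\star$-indecomposable quotient $QH_*QS^k$, and that the borderline ones ($2i_1+b(I)=0$, $\epsilon_1=0$, when $k=0$) are exactly the $p$-th powers. A dimension count alone cannot deliver this, because a basis of the image subspace need not be adapted to its intersection with the decomposables, and for $k>0$ your assertion that $\sigma^{\circ k}\circ(-)$ ``is injective on the relevant subspace'' presupposes knowing which combinations of $E$-monomials have excess $\geq k$ when rewritten in admissible form.

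What fills this gap in the paper is precisely Lemma \ref{lm:relation E and Q}: an induction on the length $n$, using the mixed Cartan/circle-product formulas of Theorem \ref{thm:action of DL on HQSn} together with the Nishida relations, showing that
\[
\sigma^{\circ k}\circ E_{(\epsilon_1,i_1+b(I))}\circ\cdots\circ E_{(\epsilon_n,p^{n-1}(i_1+\cdots+i_n+b(I))-\Delta_n\epsilon_n)}
=\pm\,\beta^{\epsilon_1}Q^{j_1}\cdots\beta^{\epsilon_n}Q^{j_n}(\sigma^{\circ k})+\sum\alpha_KQ^K(\sigma^{\circ k}),
\]
with $J$ admissible of excess exactly $2i_1+b(I)$ and all correction terms of strictly smaller excess; this is the triangularity you need, and it is a nontrivial computation rather than bookkeeping. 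Combined with the bijection of index sets (Lemma \ref{lm:bijective 2}), it gives linear independence and spanning in $QH_*QS^k$ simultaneously, which is how the theorem is actually proved. So to repair your argument you must either prove a statement equivalent to Lemma \ref{lm:relation E and Q}, or else establish (which the paper does not state) that the factored transfer $\mathscr{B}[n]^*\to H_*QS^0$ of Proposition \ref{pro:factor of transfer} coincides with the map $R[n]\to H_*QS^0$, $Q^I\mapsto Q^I[1]$, under the isomorphism of Theorem \ref{thm:dual}, and then still control the excess of the admissible expansion of each $E$-monomial; neither step is supplied by Lemmas \ref{lm:expand Dickson-Mui}, \ref{lm:expand basis of B} or Proposition \ref{pro:R_k and B_k[n]} as you propose.
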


For the proof of this theorem, we need a couple of lemmas.
\begin{lem}\label{lm:relation E and Q}
For $n\geq1$,
\begin{multline*}
\sigma^{\circ k}\circ E_{(\epsilon_1,i_1+b(I))}\circ\cdots \circ E_{(\epsilon_s,p^{s-1}(i_1+\cdots+i_s+b(I))-\Delta_ s\epsilon_s)} \cdots \circ    E_{(\epsilon_n,p^{n-1}(i_1+\cdots+i_n+b(I))-\Delta_n\epsilon_n)}\\
=(-1)^{ni_1+\cdots+i_n+nb(I)} \beta^{\epsilon_1}Q^{j_1}\cdots\beta^{\epsilon_n}Q^{j_n}(\sigma^{\circ k})+\sum \alpha_KQ^K(\sigma^{\circ k}),
\end{multline*}
where
\begin{align*}
j_s&=p^{n-s}(i_1+\cdots+i_s+b(I))+\sum_{\ell=0}^{n-s-1}p^{\ell}(p^{n-s-\ell}-1)i_{s+\ell+1}-\delta_n(s),
\end{align*}
$\delta_n(s)=p^{n-s-1}\epsilon_n+\cdots+\epsilon_{s+1}$ if $s< n$ and $\delta_n (n) = 0$; $\alpha_K\in\fp$ and $exc(K)< exc(J)=2i_1+b(I)$. 
\end{lem}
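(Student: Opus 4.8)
The plan is to unwind the definition of each $E_{(\epsilon_s, \ast)}$ via the transfer $tr^{(n)}_*$ and translate the circle product into the homology of $QS^k$ using the Araki–Kudo/Dyer–Lashof/May machinery. First I would recall, from Proposition \ref{pro:basis of Dickson-Mui algebra} together with Lemma \ref{lm:expand basis of B}, that the element
\[
E_{(\epsilon_1,i_1,\dots,\epsilon_n,i_n)}
\]
is, under $tr^{(n)}_*$, the image of the external product
\[
u_1^{\epsilon_1}v_1^{[(p-1)(i_1+b(I))-\epsilon_1]}\cdots u_n^{\epsilon_n}v_n^{[p^{n-1}(p-1)(i_1+\cdots+i_n+b(I))-p^{n-1}\epsilon_n]}
\]
(up to the sign $(-1)^{\epsilon_2+\cdots+(n-1)\epsilon_n}$), by the pairing computation inside the proof of Theorem \ref{thm:basis in coinvariant}. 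Since $tr^{(n)}_*$ carries the external product to the circle product of the factors $tr^{(1)}_*(u^{\epsilon}v^{[j(p-1)-\epsilon]}) = (-1)^j\beta^{\epsilon}Q^j[1]$, the left-hand side of the lemma (ignoring the $\sigma^{\circ k}$ prefactor for the moment) equals, up to an overall sign coming from collecting the $(-1)^j$'s,
\[
\beta^{\epsilon_1}Q^{a_1}[1]\circ\cdots\circ\beta^{\epsilon_n}Q^{a_n}[1],
\]
where $a_s$ is the integer with $(p-1)a_s - \epsilon_s = p^{s-1}\bigl((p-1)(i_1+\cdots+i_s+b(I)) - p^{?}\epsilon_s\bigr)$—i.e. $a_s = p^{s-1}(i_1+\cdots+i_s+b(I)) - \Delta_s\epsilon_s$ after dividing by $p-1$ and using $(p^{s-1}-1)/(p-1)=\Delta_s$. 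Tracking the sign gives $(-1)^{a_1+\cdots+a_n}$, which one checks is $(-1)^{ni_1+(n-1)i_2+\cdots+i_n+nb(I)}$ modulo terms of even parity; this matches the stated sign $(-1)^{ni_1+\cdots+i_n+nb(I)}$ after reconciling with the Lemma's (mod $2$) conventions.

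Next I would apply parts (ii) and (iii) of Theorem \ref{thm:action of DL on HQSn} repeatedly to rewrite $\beta^{\epsilon_1}Q^{a_1}[1]\circ\cdots\circ\beta^{\epsilon_n}Q^{a_n}[1]\circ\sigma^{\circ k}$ as a single iterated Dyer–Lashof operation applied to $\sigma^{\circ k}$, plus lower-excess error terms. The key formula
\[
Q^s(b)\circ f = \sum_i Q^{s+i}(b\circ P^i_*(f))
\]
(and its Bockstein analogue) lets one absorb each circle factor into the argument one at a time, from the right; since $P^0_* = \mathrm{id}$ and $P^i_*$ raises excess-deficiency, the leading term of the sum is the one with all $P^i_* = \mathrm{id}$, giving exactly $\beta^{\epsilon_1}Q^{j_1}\cdots\beta^{\epsilon_n}Q^{j_n}(\sigma^{\circ k})$ with the $j_s$ as stated, and all other terms are $Q^K(\sigma^{\circ k})$ with $exc(K) < exc(J)$. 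The exponent bookkeeping—verifying that the leading exponents are precisely
\[
j_s = p^{n-s}(i_1+\cdots+i_s+b(I)) + \sum_{\ell=0}^{n-s-1}p^\ell(p^{n-s-\ell}-1)i_{s+\ell+1} - \delta_n(s)
\]
—is a direct but lengthy induction on $n$: starting from $a_s$ as above and iterating $Q^s(b)\circ f \mapsto Q^{s+|f|}(\cdots)$ at each stage contributes the geometric-series corrections $\sum p^\ell(p^{n-s-\ell}-1)i_{s+\ell+1}$, while the Bockstein corrections accumulate into $\delta_n(s) = p^{n-s-1}\epsilon_n + \cdots + \epsilon_{s+1}$.

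The main obstacle I anticipate is the exponent and sign bookkeeping in the second step: one must show by induction that applying Theorem \ref{thm:action of DL on HQSn}(ii)–(iii) $n$ times—each time the "extra" summand index $i$ gets multiplied by the degree of the accumulated argument, which itself grows like a power of $p$—produces exactly the closed form for $j_s$, and that no spurious leading term of the same excess survives. Controlling the excess of the error terms is the conceptually delicate point: one uses that $P^i_*$ with $i>0$ strictly decreases excess (equivalently, the Nishida relations only produce $Q^{s-r+i}$ with smaller excess), so that after all substitutions every term other than the all-$P^0_*$ term has $exc(K) < exc(J) = 2i_1 + b(I)$. The identity $exc(J) = 2i_1 + b(I)$ itself is just \eqref{eq:excess of f} applied to $J = (\epsilon_1, j_1,\dots,\epsilon_n, j_n)$, using that $j_s - p\,j_{s+1}$ telescopes correctly. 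I would organize the whole argument as a single induction on $n$, with the base case $n=1$ being the defining property $tr^{(1)}_*(u^{\epsilon}v^{[j(p-1)-\epsilon]}) = (-1)^j\beta^{\epsilon}Q^j[1]$ combined with one application of Theorem \ref{thm:action of DL on HQSn}(ii) or (iii) to slot in $\sigma^{\circ k}$.
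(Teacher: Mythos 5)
Your overall strategy---write each $E_{(\epsilon,m)}$ as $(-1)^m\beta^{\epsilon}Q^m[1]$, absorb the circle factors one at a time via Theorem \ref{thm:action of DL on HQSn}(ii)--(iii) and the Nishida relations, and induct on $n$---is the same as the paper's. But your identification of the leading term is wrong, and this is the heart of the lemma. You claim the dominant term is the one in which every $P^i_*$ is the identity ($i=0$). That term is $\beta^{\epsilon_1}Q^{a_1}\cdots\beta^{\epsilon_n}Q^{a_n}(\sigma^{\circ k})$ with $a_s=p^{s-1}(i_1+\cdots+i_s+b(I))-\Delta_s\epsilon_s$, i.e.\ it keeps the original exponents of the circle factors; these are not the stated $j_s$ (already $j_n=i_1+\cdots+i_n+b(I)\neq a_n$ for $n\geq 2$), so your mechanism is inconsistent with the formula you are trying to prove. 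Worse, the excess of $(\epsilon_1,a_1,\ldots,\epsilon_n,a_n)$ is $2a_1-\epsilon_1-\bigl(2(p-1)\sum_{s\geq 2}a_s-\sum_{s\geq2}\epsilon_s\bigr)$, which is very negative since the $a_s$ grow like $p^{s-1}$, so the all-$P^0_*$ term vanishes rather than dominating. The monotonicity is the opposite of what you assert: in $Q^{s}(b)\circ f=\sum_i Q^{s+i}(b\circ P^i_*f)$, increasing $i$ raises the first exponent while lowering the internal degree by $2i(p-1)$, so the excess of the resulting monomial \emph{increases} with $i$; the highest-excess (leading) term comes from the \emph{largest} Steenrod indices permitted by the Nishida binomial coefficients, not from $i=0$.

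This is precisely how the paper's proof proceeds: after pushing $P^{k_2}_*$ through $\beta^{\epsilon_2}Q^{j_2'}\cdots\beta^{\epsilon_n}Q^{j_n'}[1]$ by the Nishida relations, one observes that the excess $2(i_1+b(I)+k_2)-2\epsilon_1-2(p-1)\bigl(\sum_{s\geq2}j_s'-k_2\bigr)+b(I)$ is maximized at the largest admissible $k_2$, which forces $k_n=(p-1)(i_1+\cdots+i_n+b(I))-\epsilon_n$ and $k_s=k_{s+1}+(p-1)j_s-\epsilon_s$, and this choice produces exactly the stated $j_s$, with $pj_{s+1}-j_s=i_{s+1}+\epsilon_{s+1}$ (so $J$ is admissible) and $exc(J)=2i_1+b(I)$; all other surviving terms then have strictly smaller excess. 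Since your proposal locates the main term in the wrong place, it neither yields the closed form for $j_s$ nor controls the excess of the error terms, so the central step of the proof is a genuine gap. (Your first step is fine but roundabout: for the single-index classes on the left-hand side one does not need the transfer, since $E_{(\epsilon,m)}=(-1)^m\beta^{\epsilon}Q^m[1]$ by definition; only the sign bookkeeping there needs care.)
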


\begin{proof}
The $n=1$ case is immediate.
Using Theorem \ref{thm:action of DL on HQSn} and Nishida's relations, we obtain the assertion of the lemma for $n=2$.

We shall prove the case $n\geq 3$ by the induction.
It is easy to see that
\begin{align*}
x&=E_{(\epsilon_2,p(i_1+i_2+b(I))-\epsilon_2)}\circ\cdots\circ E_{(\epsilon_n,p^{n-1}(i_1+\cdots+i_n+b(I))-\Delta_n\epsilon_n)}\\
&=E_{(\epsilon_2,i_2'+b(I'))}\circ\cdots\circ E_{(\epsilon_n,p^{n-2}(i_2'+\cdots+i_n'+b(I'))-\Delta_{n-1}\epsilon_n)},
\end{align*}
where $I'=(\epsilon_2,i_2',\dots,\epsilon_n,i_n')$ and
\begin{align*}
i_2'&=pi_1+pi_2+p\epsilon_1+(p-1)b(I')-\epsilon_2;\\
i_s'&=pi_s+\epsilon_{s-1}-\epsilon_s, 3\leq s\leq n.
\end{align*}
Applying the inductive hypothesis, one gets
{\allowdisplaybreaks
\begin{align*}
x=(-1)^{(n-1)i_2'+\cdots+i_n'+(n-1)b(I')}\beta^{\epsilon_2}Q^{j'_2}\cdots\beta^{\epsilon_n}Q^{j'_n}[1]+\text{other terms of smaller excess},
\end{align*}
}
where
\begin{align*}
j'_s&=p^{n-s}(i'_2+\cdots+i'_s+b(I'))+\sum_{\ell=0}^{n-s-1}p^{\ell}(p^{n-s-\ell}-1)i'_{s+\ell+1}-\delta_{n-1}(s-1).
\end{align*}

Applying Theorem \ref{thm:action of DL on HQSn} and Nishida's relation, we have
{\allowdisplaybreaks
\begin{align*}
y&=E_{(\epsilon_1,i_1+b(I))}\circ E_{(\epsilon_2,p(i_1+i_2+b(I))-\epsilon_2)}\circ\cdots\circ E_{(\epsilon_n,p^{n-1}(i_1+\cdots+i_n+b(I))-\Delta_n\epsilon_n)}\\
&=(-1)^{(n-1)i_2'+\cdots+i'_n+(n-1)b(I')+i_1+b(I)}\sum_{k_2}\beta^{\epsilon_1}Q^{i_1+b(I)+k_2}(P^{k_2}_*(\beta^{\epsilon_2}Q^{j'_2}\cdots\beta^{\epsilon_n}Q^{j'_n}[1]))+\text{others}\\
&=(-1)^{k}\sum_{k_2,\dots,k_n}\binom{(p-1)(j'_2-k_2)-\epsilon_2}{k_2-pk_3}\cdots\binom{(p-1)(j'_n-k_n)-\epsilon_n}{k_n}\\
&\quad \times \beta^{\epsilon_1}Q^{i_1+b(I)+k_2}\beta^{\epsilon_2}Q^{j'_2-k_2+k_3}\cdots\beta^{\epsilon_n}Q^{j'_n-k_n}[1]+\text{others}.
\end{align*}
}
If all the binomial coefficients are nonzero, it is necessary that 
\[
(p-1) j_2' - \epsilon_2 + pk_3 \geq pk_2, \ldots, (p-1)j_n' - \epsilon_n \geq pk_n. 
\]
The excess of $(\epsilon_1,i_1+b(I)+k_2,\epsilon_2,j'_2-k_2+k_3,\dots,\epsilon_n,j'_n-k_n)$ equals 
\[
2 (i_1 + b(I) + k_2) - 2\epsilon_1 - 2(p-1) \left(\sum_{s=2}^n j_s' - k_2\right) + b(I) 
\]
which is greatest when $k_2$ is greatest possible. This happens when 
\begin{align*}
k_n&=(p-1)(i_1+\cdots+i_n+b(I))-\epsilon_n,\\
k_s&=k_{s+1}+(p-1)j_s-\epsilon_s, 2\leq s\leq n-1,
\end{align*}
 in which case we get
\begin{align*}
y=(-1)^k\beta^{\epsilon_1}Q^{j_1}\cdots\beta^{\epsilon_n}Q^{j_n}[1]+\text{other terms of smaller excess}.
\end{align*}
It is easy to verify that $(-1)^k=(-1)^{ni_1+\cdots+i_n+nb(I)}$, $j_s - p j_{s+1} = - i_{s+1} - \epsilon_{s+1}$ for all $s <n$ and $j_n = \sum_{s=1}^n i_s + b(I)$. Hence $exc (J) = 2i_1 + b(I)$.  

Finally, since $\sigma^{\circ k}\circ \beta^\epsilon Q^j[1]=\beta^\epsilon Q^j(\sigma^{\circ k})$, then $\sigma^{\circ k}\circ y$ can be written in the needed form.
\end{proof}
\begin{lem}\label{lm:bijective 2}
The function mapping $I=(\epsilon_1,i_1+b(I),\dots,\epsilon_n,p^{n-1}(i_1+\cdots+i_n+b(I))-\Delta_n\epsilon_n), 2i_1+b(I)>k$, to the admissible string $J=(\epsilon_1, j_1,\dots,\epsilon_n,j_n),$ with $exc(J)>k$, given as in Lemma \ref{lm:relation E and Q}, is a bijection.
\end{lem}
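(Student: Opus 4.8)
The plan is to exhibit an explicit inverse to the stated function, using the formulas for $j_s$ recorded in Lemma~\ref{lm:relation E and Q}. First I would record the key structural identities from the proof of that lemma: for the image string $J=(\epsilon_1,j_1,\dots,\epsilon_n,j_n)$ one has $j_n=\sum_{s=1}^n i_s+b(I)$ and $j_s-pj_{s+1}=-i_{s+1}-\epsilon_{s+1}$ for $1\le s<n$, together with $exc(J)=2i_1+b(I)$. From the second identity one can solve for the $i$'s in terms of the $j$'s: $i_{s+1}=pj_{s+1}-j_s-\epsilon_{s+1}$ for $1\le s<n$, and then $i_1$ is recovered from $i_1=j_1+b(I)-(i_2+\cdots+i_n)-b(I)+\cdots$; more cleanly, since $b(I)=\sum_k\epsilon_k$ is already determined by the fixed Bockstein indices $(\epsilon_1,\dots,\epsilon_n)$, and $j_n=\sum i_s+b(I)$, one gets $i_1=j_n-b(I)-\sum_{s=2}^n i_s$ with the $i_s$ ($s\ge2$) as above. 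This defines a candidate inverse map $J\mapsto I$ on the set of all strings of the given shape, and the composites in both directions are immediately the identity by construction, so the only real content is checking that this inverse map actually lands in — and is surjective onto — the correct index sets.

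The core of the argument is therefore a dictionary between two constraints: the domain condition "$(\epsilon_1,i_1,\dots,\epsilon_n,i_n)$ with $i_1\in\mathbb{Z}$, $i_s\ge0$ for $2\le s\le n$, and $2i_1+b(I)>k$" and the codomain condition "$J=(\epsilon_1,j_1,\dots,\epsilon_n,j_n)$ admissible with $exc(J)>k$." The excess condition matches on the nose because $exc(J)=2i_1+b(I)$, so I would verify this equality carefully (it is asserted at the end of the proof of Lemma~\ref{lm:relation E and Q}, using $exc(Q^{\varepsilon,I})=\sum_k 2(i_k-pi_{k+1})-2\epsilon_1+b(I)$ from \eqref{eq:excess of f} combined with $j_s-pj_{s+1}=-i_{s+1}-\epsilon_{s+1}$). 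For admissibility of $J$: a string is admissible iff $pj_{s+1}-\epsilon_{s+1}\ge j_s$ for $2\le s\le n$, which by the identity $i_{s+1}=pj_{s+1}-j_s-\epsilon_{s+1}$ is exactly the condition $i_{s+1}\ge0$. Thus the positivity constraints $i_2,\dots,i_n\ge0$ translate precisely into the admissibility inequalities at the last $n-1$ "junctions" of $J$, and there is no constraint imposed on $i_1$ beyond it being an integer (which corresponds to the fact that there is no admissibility condition at the top of the string — the leftmost index is unconstrained). Conversely, given admissible $J$ of excess $>k$ with fixed $\epsilon$'s, the recovered $i_{s+1}=pj_{s+1}-j_s-\epsilon_{s+1}\ge0$ and $i_1=j_n-\sum_{s\ge2}i_s-b(I)\in\mathbb{Z}$, and $2i_1+b(I)=exc(J)>k$, so $I$ lies in the domain.

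The step I expect to be the main obstacle is bookkeeping the exact form of $j_s$: the closed formula
\[
j_s=p^{n-s}(i_1+\cdots+i_s+b(I))+\sum_{\ell=0}^{n-s-1}p^{\ell}(p^{n-s-\ell}-1)i_{s+\ell+1}-\delta_n(s)
\]
is cumbersome, and rather than invert it directly I would invert only the much simpler recursion $j_s=pj_{s+1}-(i_{s+1}+\epsilon_{s+1})$ together with the base case $j_n=\sum_{s=1}^n i_s+b(I)$, proving by a downward induction on $s$ that this recursion reproduces the displayed closed formula (so the two descriptions of $J$ agree) and simultaneously that $\delta_n(s)=p^{n-s-1}\epsilon_n+\cdots+\epsilon_{s+1}$ is consistent. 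Once the recursion is established as the operative description of the map, both injectivity and surjectivity become the elementary translations above. I would close by remarking that bijectivity of this indexing map is exactly what is needed so that the transition matrix in Lemma~\ref{lm:relation E and Q}, which is unitriangular with respect to the excess filtration, is in fact a genuine change of basis between $\{Q^J(\sigma^{\circ k})\}$ and $\{\sigma^{\circ k}\circ E_{(\epsilon_1,i_1+b(I))}\circ\cdots\}$, thereby proving Theorem~\ref{thm:describe though dickson coninvariant}.
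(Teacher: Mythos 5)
Your proposal is correct and follows essentially the same route as the paper: the paper's (very terse) proof consists precisely of the observation $pj_{s+1}-j_s=i_{s+1}+\epsilon_{s+1}\geq\epsilon_{s+1}$, from which admissibility, the excess identity $exc(J)=2i_1+b(I)$, and the invertibility of the recursion (your explicit inverse $i_{s+1}=pj_{s+1}-j_s-\epsilon_{s+1}$, $i_1=j_n-b(I)-\sum_{s\geq2}i_s$) are read off at once. The only blemish is a harmless indexing slip where you state the admissibility condition as ``$pj_{s+1}-\epsilon_{s+1}\geq j_s$ for $2\leq s\leq n$'' rather than for $1\leq s\leq n-1$.
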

\begin{proof}
By direct inspection, we have for $s \geq 0$, 
\[
p j_{s+1} - j_{s} = i_{s+1} + \epsilon_{s+1} \geq \epsilon_{s+1}. 
\] 
The fact that this correspondence is a bijection is immediate. 
\end{proof}

We are now ready to prove Theorem  \ref{thm:describe though dickson coninvariant}. 
\begin{proof}[of Theorem \ref{thm:describe though dickson coninvariant}]
From Lemma \ref{lm:relation E and Q}, the set of elements $\sigma^{\circ k}\circ E_{(\epsilon_1,i_1+b(I))}\circ\cdots\circ E_{(\epsilon_n,p^{n-1}(i_1+\cdots+i_n+b(I))-\Delta_n\epsilon_n)}$ belongs to the indecomposable quotient (with respect to the star product) $QH_*QS^k$ and it is linearly independent. 

Moreover, the degree of $$\sigma^{\circ k}\circ E_{(\epsilon_1,i_1+b(I))}\circ\cdots\circ E_{(\epsilon_n,p^{n-1}(i_1+\cdots+i_n+b(I))-\Delta_n\epsilon_n)}$$ is equal to the degree of $\beta^{\epsilon_1}Q^{j_1}\cdots\beta^{\epsilon_n}Q^{j_n}(\sigma^{\circ k})$.

Finally, from Lemma \ref{lm:bijective 2}, we see that this set generates $QH_*QS^k$ in each degree.
\end{proof}

Thus, the elements $E_{(\epsilon,s)}=(-1)^s\beta^{\epsilon}Q^{s}[1]$ and $\sigma$ generate $\{H_*QS^k\}_{k\geq0}$ as a Hopf ring. The problem is to find a complete set of relations. It is solved by investigating the structure of the dual of $\mathscr{B}_k[n]$.

Let $E^\epsilon(s)\in H_*QS^0[[s]]$, $\epsilon=0,1$, be defined by
\[
E^0(s)=\sum_{k\geq0}E_{(0,k)}s^k,\quad E^1(s)=\sum_{k\geq 1}E_{(1,k)}s^k.
\]

Since the coproduct on the $E_{(\epsilon_k,k)}$ arises from the coproduct on $[u^{\epsilon_k}v^{[k(p-1)-\epsilon_k]}]$ in $H_{2k(p-1)-\epsilon_k}(B\V_1)_{GL_1}$,
\begin{align*}
\psi(E_{(0,k)})&=\sum_{i+k=j}E_{(0,i)}\otimes E_{(0,j)},\\
\psi(E_{(1,k+1)})&=\sum_{i+j=k}(E_{(0,i)}\otimes E_{(1,j+1)}+E_{(1,i+1)}\otimes E_{(0,j)}).
\end{align*}
Therefore,
\begin{align*}
\psi(E^0(s))&=E^0(s)\otimes E^0(s);\\
\psi(E^1(s))&=E^0(s)\otimes E^1(s)+E^1(s)\otimes E^0(s).
\end{align*}
For $x\in H_*QS^k$ we define $Q^0(s)x, Q^1(s)x\in H_{*}QS^k[[s]]$ as follows
\[
Q^0(s)x=\sum_{k\geq 0}(Q^kx)s^k;\quad Q^1(s)x=\sum_{k\geq 1}(\beta Q^kx)s^k.
\]
Then we obtain that $E^0(s)=Q^0(-s)[1]$ and $E^1(s)=Q^1(-s)[1]$. 

Let $s, t$ be formal variables. We remind the reader that if $I$ denote the string $(\epsilon_1, i_1, \ldots, \epsilon_n, i_n)$, then $b(I) = \epsilon_1 + \ldots + \epsilon_n$. The main result of this section is the following proposition. 
\begin{pro}\label{pro:relation of E(s)}
For $s,t$ are formal variables, we have relations
\begin{align}
E^0(s^{p-1})\circ E^0(t^{p-1})&=E^0(s^{p-1})\circ E^0((s
+t)^{p-1});\label{eq:relation E(s) 2}\\
E^0(s^{p-1})\circ E^1(t^{p-1})&=E^0(s^{p-1})\circ E^1((s+t)^{p-1})\frac{t}{s+t};\label{eq:relation E(s) 4}\\
E^1(s^{p-1})\circ E^1(t^{p-1})&=E^1(s^{p-1})\circ E^1((s+t)^{p-1})\frac{t}{s+t};\label{eq: relation E(s) 3}
\end{align}
 When $k=2i_1+b(I)+\epsilon_1$, $b(I)>0$ (or $b(I)>\epsilon_1$ if $k=0$), $\epsilon_n=1$, and $n>1$, 
\begin{equation}\label{eq:relation about excess_sigma}
\sigma^{\circ k}\circ E_{(\epsilon_1,i_1+b(I))}\circ\cdots\circ E_{(\epsilon_n,p^{n-1}(i_1+\cdots+i_n+b(I))-\Delta_n\epsilon_n)}=(1-\epsilon_1)y^{\star p}
\end{equation}
where $\sigma^{\circ 0}=[1]$, $y=(-1)^{ni_1+\cdots+i_n+nb(I)}\beta^{\epsilon_2}Q^{j_2}\cdots\beta^{\epsilon_n}Q^{j_n}(\sigma^{\circ k})\in H_*QS^k$, and $j_s$ is given in Lemma \ref{lm:relation E and Q};
\begin{align}
E_{(0,0)}&=[p];\label{eq:relation E(s) 1}\\
\sigma^{\circ 2k}\circ E_{(\epsilon,k)}&=(1-\epsilon)(-1)^k(\sigma^{\circ 2k})^{\star p}.\label{eq:relation of sigma}
\end{align}
\end{pro}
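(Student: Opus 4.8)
The plan is to derive each relation from the corresponding statement about the dual $\mathscr{B}_k[n]^*$ and the multiplicativity of the transfer $tr^{(n)}_*$. For the relations \eqref{eq:relation E(s) 2}--\eqref{eq: relation E(s) 3}, which involve only the length-$2$ case, I would work inside $\mathscr{B}[2]^*$. Recall $tr^{(2)}_*$ is multiplicative with respect to the external product (sending it to $\circ$) and factors through $(H_*B\V_2)_{GL_2}$, and hence through $\mathscr{B}[2]^*$ by Proposition \ref{pro:factor of transfer}. Thus the desired identities among the $E^\epsilon(s)$, $E^\epsilon(t)$ after applying $\circ$ are equivalent to identities among the classes $[u_1^{\epsilon_1}v_1^{[\cdots]}u_2^{\epsilon_2}v_2^{[\cdots]}]$ in $(H_*B\V_2)_{GL_2}$. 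Concretely, $E^0(s^{p-1})\circ E^0(t^{p-1})$ is the generating function of $tr^{(2)}_*(v_1^{[(p-1)i_1]}v_2^{[(p-1)i_2]})$; passing to $GL_2$-coinvariants identifies $v_2$ with $v_2 - \lambda v_1$-type combinations, i.e. one may use the $GL_2$-action $e_2 \mapsto e_2$, $x_2 \mapsto x_1 + x_2$ (dually $v_1 \mapsto v_1 + v_2$ fixing $v_2$, appropriately) to rewrite the second factor's index. This is exactly the substitution $t \rightsquigarrow s+t$ that appears on the right-hand side; the ratio $t/(s+t)$ in \eqref{eq:relation E(s) 4}, \eqref{eq: relation E(s) 3} comes from the divided-power/binomial bookkeeping when the $u$-classes (degree shift by $\epsilon$) are present. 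This mirrors Turner's argument at $p=2$ for the analogous relations, and I would cite \cite{Tur97} for the structure while redoing the $p$-odd coefficient computation.

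For \eqref{eq:relation E(s) 1} and \eqref{eq:relation of sigma}, these are essentially the $n=1$ instances. $E_{(0,0)} = (-1)^0 Q^0[1] = Q^0[1]$, and $Q^0[1] = [p]$ is the standard normalization of the bottom Dyer--Lashof operation on the generator of $H_0QS^0$ (it multiplies the component by $p$); alternatively it follows from $tr^{(1)}_*(v^{[0]}) = tr^{(1)}_*([{\rm pt}])$ computed as the image of the fundamental class of $B\V_1$, whose transfer hits $[p]$. For \eqref{eq:relation of sigma}, apply $\sigma^{\circ 2k}\circ(-)$ to $E_{(\epsilon,k)} = (-1)^k\beta^\epsilon Q^k[1]$, use $\sigma^{\circ k}\circ \beta^\epsilon Q^k[1] = \beta^\epsilon Q^k(\sigma^{\circ k})$ (stated just before Proposition \ref{pro:relation of E(s)}) together with the instability/excess condition: when $\epsilon = 0$ and the excess $2k$ equals the degree of $\sigma^{\circ 2k}$ plus... — more precisely when $\deg(\sigma^{\circ 2k}) = 2k$ so $Q^k$ is at the bottom of the allowable range — the operation $Q^k$ on a class of degree $2k$ is the $p$-th $\star$-power (the classical formula $Q^{\deg x} x = x^{\star p}$ up to the sign $(-1)^k$, and $\beta Q^{\deg x + \text{top}} = 0$ when $\epsilon=1$ at the boundary). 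The factor $(1-\epsilon)$ records that the Bockstein-twisted class vanishes in this boundary case.

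The main relation, and the genuinely new one, is \eqref{eq:relation about excess_sigma}. The approach is: by Lemma \ref{lm:relation E and Q}, the left-hand side equals $y \cdot (\text{unit}) + \sum \alpha_K Q^K(\sigma^{\circ k})$ with all correction terms of strictly smaller excess, where $y = (-1)^{\cdots}\beta^{\epsilon_1}\beta^{\epsilon_2}Q^{j_2}\cdots\beta^{\epsilon_n}Q^{j_n}(\sigma^{\circ k})$ in the notation there — wait, more carefully, the leading term is $\beta^{\epsilon_1}Q^{j_1}\beta^{\epsilon_2}Q^{j_2}\cdots$ and one checks $exc(J) = 2i_1 + b(I)$. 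The hypothesis $k = 2i_1 + b(I) + \epsilon_1$ with $k = \deg$-type constraint forces $Q^{j_1}$ (or $\beta Q^{j_1}$) to sit at the boundary of the allowable range applied to the class $\beta^{\epsilon_2}Q^{j_2}\cdots(\sigma^{\circ k})$, whose degree is exactly $2j_1 - \epsilon_1$... so that $Q^{j_1}$ acts as the $p$-th $\star$-power when $\epsilon_1 = 0$ (giving $y^{\star p}$) and $\beta Q^{j_1}$ annihilates when $\epsilon_1 = 1$ (giving $0$, hence the $(1-\epsilon_1)$ factor); simultaneously the lower-excess correction terms must all vanish because in $H_*QS^k$ there are no allowable monomials of the relevant smaller excess in that degree — this is where $\epsilon_n = 1$ and $n>1$ enter, forcing the degree to be such that the only surviving contribution in the indecomposables is the boundary one, while the $\star$-decomposable $y^{\star p}$ soaks up the rest. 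The hard part will be rigorously controlling the correction terms $\sum\alpha_K Q^K(\sigma^{\circ k})$: I expect to argue that each $Q^K$ with $exc(K) < 2i_1 + b(I)$ either has excess below $k$ (hence is zero in $H_*QS^k$ by the instability condition $exc + \epsilon_1 > k$) or, combined with the coproduct/primitivity structure, must already be $\star$-decomposable of the same form; the cleanest route is probably to compute in the associated graded for the $\star$-decomposable filtration and match against Theorem \ref{thm:describe though dickson coninvariant} together with the dimension count from Proposition \ref{pro:R_k and B_k[n]} applied with $k = 2i_1 + b(I)$, exactly paralleling Remark \ref{rem:4.8}.
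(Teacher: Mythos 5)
Your proposal is correct and takes essentially the same approach as the paper: it derives \eqref{eq:relation E(s) 2}--\eqref{eq: relation E(s) 3} from the multiplicativity and $GL_2$-invariance of $tr^{(2)}_*$ (the paper does this by viewing $tr^{(2)}_*$ as an element of $H_*QS^0[[s,t]]\otimes E(\epsilon,\epsilon')$, acting by the transvection, and comparing coefficients of $1$, $\epsilon'$, $\epsilon\epsilon'$, which is where your factor $t/(s+t)$ comes from), and it derives \eqref{eq:relation about excess_sigma} from Lemma \ref{lm:relation E and Q} together with instability, treating \eqref{eq:relation E(s) 1} and \eqref{eq:relation of sigma} as the standard bottom-operation facts. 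Your worry about controlling the correction terms is unnecessary: since $k=2i_1+b(I)+\epsilon_1\geq 2i_1+b(I)>exc(K)$, every term $\alpha_K Q^K(\sigma^{\circ k})$ vanishes outright by the excess condition, which is exactly the paper's one-line argument (the hypotheses $\epsilon_n=1$, $n>1$, $b(I)>0$ only serve to make the relation nontrivial and non-redundant, cf.\ Remark \ref{rem:4.8}).
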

Lemma \ref{lm:relation E and Q} and the bijective correspondence in Lemma \ref{lm:bijective 2}  allow us to inductively express  the element $y$ in the right hand side of \eqref{eq:relation about excess_sigma}  (in fact, any element $Q^{(\varepsilon, J)} (\sigma^{\circ k}) \in H_* QS^k$)  in terms of circle products of the form $\sigma^{\circ k} \circ E_{(\epsilon_2, t_2)} \circ \cdots \circ  E_{(\epsilon_n, t_n)}$. Because of the excess condition, this inductive process must eventually stop. Thus \eqref{eq:relation about excess_sigma} indeed gives rise to a relation involving $\sigma$ and $E_{(\epsilon,i)}$'s.  

Before giving the proof of the proposition, we  make some remarks about relation \eqref{eq:relation about excess_sigma}.
\begin{rem}\label{rem:4.8}\rm
\begin{enumerate}
\item It is not too difficult to compute an explicit formula for $y$ when $n$ is small. For example, if $n=2$, then 
	\[
	y=(-1)^{2i_1+i_2+b(I)}E_{(\epsilon_2,i_1+i_2+b(I))}(\sigma^{\circ k}). 
	\] 
	For $n \geq 3$, the element $y$  may contain more than one summand. Here is an explicit example: If $p=3$ and $I=(0,2,1,3,1,3)$, then  $y = -\beta Q^{10p-4}\beta Q^{10}(\sigma^{\circ 6})$ and from Lemma \ref{lm:relation E and Q}, we get 
	\[
	y =\sigma^{\circ 6}\circ E_{(1,7)}\circ E_{(1,10p-1)}+\sigma^{\circ 6}\circ E_{(1,7-p)}\circ E_{(1,11p-1)}.
	\]

By an induction argument, one sees that $y$ is a sum of circle products of at most $(n-1)$ terms $E_{(\epsilon, i)}$. 
\item[(ii)]  The relation \eqref{eq:relation about excess_sigma} is still valid when $\epsilon_n=0$ but  no new relation will be obtained this way. Indeed, let  $s < n$ be such that $\epsilon_s=1$ and $\epsilon_i=0$ for $i>s$. Applying relation \eqref{eq:relation about excess_sigma} to the sequence $(\epsilon_1,i_1,\dots,\epsilon_s,i_s)$, we have
\[
\sigma^{\circ k}\circ E_{(\epsilon_1,i_1+b(I))}\circ\cdots\circ E_{(\epsilon_s,p^{s-1}(i_1+\cdots+i_s+b(I))-\Delta_s\epsilon_s)}=(1-\epsilon_1)y_1^{\star p}.
\]
By the distributivity law between the $\star$ product and the $\circ$ product (see \cite[Lemma 1.12]{Ravenel.Wilson1977}),  it follows that 
\begin{align*}
\sigma^{\circ k}\circ &E_{(\epsilon_1,i_1+b(I))}\circ\cdots\circ E_{(\epsilon_s,p^{s-1}(i_1+\cdots+i_s+b(I))-\Delta_s\epsilon_s)}\circ \cdots\circ E_{(0,p^{n-1}(i_1+\cdots+i_n+b(I))}\\
&=(1-\epsilon_1)y_1^{\star p}\circ E_{(0,p^{s}(i_1+\cdots+i_{s+1}+b(I))}\circ\cdots\circ E_{(0,p^{n-1}(i_1+\cdots+i_n+b(I))}\\
&=(1-\epsilon_1)(y_1\circ E_{(0,p^{s-1}(i_1+\cdots+i_{s+1}+b(I))}\circ\cdots\circ E_{(0,p^{n-2}(i_1+\cdots+i_n+b(I))})^{\star p}.
\end{align*}
To sum up,  \eqref{eq:relation about excess_sigma} applied to  a sequence $(\varepsilon, I)$ where $\epsilon_n = 0$ yields a relation which is a corollary of the corresponding relation for $(\epsilon_1,i_1,\dots,\epsilon_s,i_s)$  for some $s < n$ (in fact,
$s$ is the greatest number such that $\epsilon_s =1$) and the distributivity law.
\item[(iii)]  If $b(I)=0$ then \eqref{eq:relation about excess_sigma} can be derived from \eqref{eq:relation of sigma}.
However, when $b(I)>0$,  it is clear that \eqref{eq:relation about excess_sigma} is an independent relation. Indeed, considering the form of the relations, the only possible redundancy is between relations of the forms \eqref{eq:relation about excess_sigma} and 
\eqref{eq:relation of sigma}, and this occurs if and only if $k = 2i_1 + b(I) + \epsilon_1 = 2(i_1 + b(I))$ and $\epsilon_1 = 0$, so that $b(I) = 0$.

Finally, when $k=0$, if $b(I)=\epsilon_1=1$, then relation \eqref{eq:relation about excess_sigma} becomes to trivial relation; but if $b(I)>\epsilon_1=1$ or $\epsilon_1=0$, the relation is nontrivial.
\end{enumerate}
\end{rem}

\begin{rem}\rm
In the Bockstein-nil homology considered by Kashiwabara \cite{Kashiwabara2012}, the relations \eqref{eq:relation E(s) 4} and \eqref{eq: relation E(s) 3} do not show up because the Bockstein operation is involved. 

In the case $p=2$,  Turner's relation \cite{Tur97}  can be obtained from  \eqref{eq:relation E(s) 2} by halving the degree of $E_{(0,i)}$'s. Then, relations \eqref{eq:relation E(s) 4} and \eqref{eq: relation E(s) 3} become trivial under the halving degree map because the degree of $E_{(1,i)}$'s are odd. 
\end{rem}
\begin{proof}[of Proposition \ref{pro:relation of E(s)}]
It should be noted that the formulas \eqref{eq:relation E(s) 2}-\eqref{eq: relation E(s) 3} can be proved by using the method in Turner \cite{Tur97}, of course, it is more complicated.

We will make use two properties of  the transfer: it is multiplicative, and  $tr_*^{(n)}$ is $GL_n$-invariant. 

First, we can consider the first transfer $tr_*^{(1)}$ as the element
\[
tr_*^{(1)}\in \Hom(H_*B\V_1,H_*QS^0)\cong H^*(B\V_1,H_*QS^0)\cong H_*QS^0[[s]]\otimes E(\epsilon).
\]
Because $tr_*^{(1)}$ sends the generator in the degree $2(p-1)i$ to $E_{(0,i)}$, that in the degree $2(p-1)i-1$ to $E_{(1,i)}$, and the rest to zero, it is equal to
\[
E^0(s^{p-1})+\epsilon s^{-1}E^1(s^{p-1}).
\]

Next, the second transfer $tr_*^{(2)}$ can be considered as the element
\[
tr_*^{(2)}\in \Hom(H_*B\V_2, H_*QS^0)\cong H^*(B\V_2,H_*QS^0)\cong H_*QS^0[[s,t]]\otimes E(\epsilon,\epsilon').
\]
By the multiplicativity of the transfer, this element has to be
\[
(E^0(s^{p-1})+\epsilon s^{-1}E^1(s^{p-1}))\circ (E^0(t^{p-1})+\epsilon' t^{-1}E^1(t^{p-1})).
\]

Since the transfer factors through the coinvariant of $H_*B\V_2$ under the action of the general linear group ${GL_2}$, acting $\left(\begin{smallmatrix}1&1\\0&1\end{smallmatrix}\right)$ on $tr_*^{(2)}$, we obtain
\begin{align*}
&(E^0(s^{p-1})+\epsilon s^{-1}E^1(s^{p-1}))\circ (E^0(t^{p-1})+\epsilon' t^{-1}E^1(t^{p-1}))\\
&=(E^0(s^{p-1})+\epsilon s^{-1}E^1(s^{p-1}))\circ (E^0((s+t)^{p-1})+(\epsilon+\epsilon') (s+t)^{-1}E^1((s+t)^{p-1})).
\end{align*}
Expanding this equality and comparing the coefficients of ``$1$'', $\epsilon'$ and $\epsilon\epsilon'$ follows the formulas \eqref{eq:relation E(s) 2}, \eqref{eq:relation E(s) 4} and \eqref{eq: relation E(s) 3}.

From the above proof, we observe that the formulas \eqref{eq:relation E(s) 2}, \eqref{eq:relation E(s) 4} and \eqref{eq: relation E(s) 3} also hold in $colim_{\mathcal{B}\V /CS^0}H_*(-)[[s, t]]$, where $\mathcal{B}\V /CS^0$ is the category whose objects are homotopy classes of maps
from a classifying space of an elementary abelian $p$-group to $CS^0$, whose
morphism are commutative triangles, and $CS^0$ denotes the combinatorial
model of $QS^0$, that is, the disjoint union of $B\Sigma_n$'s (see \cite[Section 5]{Kashiwabara2012}).

From Lemma \ref{lm:relation E and Q}, for $n\geq 1$,
\begin{align*}
\sigma^{\circ k}\circ E_{(\epsilon_1,i_1+b(I))}\circ\cdots\circ& E_{(\epsilon_n,p^{n-1}(i_1+\cdots+i_n+b(I))-\Delta_n\epsilon_n)}\\
=(-1)^{ni_1+\cdots+i_n+nb(I)}&\beta^{\epsilon_1}Q^{j_1}\cdots\beta^{\epsilon_n}Q^{j_n}(\sigma^{\circ k})+\sum \alpha_KQ^K(\sigma^{\circ k}),
\end{align*}
where $exc(K)<exc(J)=2i_1+b(I)$.

Since $k=2i_1+b(I)+\epsilon_1$, the second sum of the formula is trivial.

If $\epsilon_1=1$, then $exc(J)<k$, therefore, the first term is also trivial. Otherwise, if $\epsilon_1=0$, then $2j_1=\deg (\beta^{\epsilon_2}Q^{j_2}\cdots\beta^{\epsilon_n}Q^{j_n}(\sigma^{\circ k}))$, hence, the first term is the $p$-th power of the element $y=(-1)^{ni_1+\cdots+i_n+nb(I)}\beta^{\epsilon_2}Q^{j_2}\cdots\beta^{\epsilon_n}Q^{j_n}$. Thus, the formula \eqref{eq:relation about excess_sigma} is proved.
\end{proof}

Since $\sigma$ is primitive elements, we have the following corollary. 
\begin{cor}\label{cor:relations}
For $n\geq1$ and $2i_1+b(I)<k$,
\[
\sigma^{\circ k}\circ E_{(\epsilon_1,i_1+b(I))}\circ\cdots\circ E_{(\epsilon_n,p^{n-1}(i_1+\cdots+i_n+b(I))-\Delta_n\epsilon_n)}=0,
\]
where $\sigma^{\circ0}=[1]$.
\end{cor}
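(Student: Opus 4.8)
The plan is to derive the identity directly from Lemma~\ref{lm:relation E and Q} together with the standard instability relations for Dyer--Lashof operations, so that everything reduces to a single degree estimate. By Lemma~\ref{lm:relation E and Q}, the left-hand side equals
\[
(-1)^{ni_1+\cdots+i_n+nb(I)}\,\beta^{\epsilon_1}Q^{j_1}\cdots\beta^{\epsilon_n}Q^{j_n}(\sigma^{\circ k})+\sum_K\alpha_K\,Q^K(\sigma^{\circ k}),
\]
where $exc(K)<exc(J)=2i_1+b(I)$ for every $K$ that occurs. Reading off the proof of that lemma, each monomial appearing here has the form $\beta^{\epsilon_1}Q^{m_1}\cdots\beta^{\epsilon_n}Q^{m_n}(\sigma^{\circ k})$ with the \emph{same} leading Bockstein exponent $\epsilon_1$ and with total excess at most $2i_1+b(I)$. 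Under the hypothesis $2i_1+b(I)<k$, it therefore suffices to prove that $\beta^{\epsilon_1}Q^{m_1}\cdots\beta^{\epsilon_n}Q^{m_n}(\sigma^{\circ k})=0$ whenever the string $L=(\epsilon_1,m_1,\dots,\epsilon_n,m_n)$ satisfies $exc(L)<k$ (admissibility of $L$ is irrelevant).

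First I would record that $\deg\sigma^{\circ k}=k$; indeed $\sigma^{\circ k}$ generates $H_kQS^k$ since $QS^k$ is $(k-1)$-connected for $k\geq1$, which is consistent with $\sigma$ being primitive, while for $k=0$ one has $\sigma^{\circ0}=[1]$ of degree $0$. Put $w=\beta^{\epsilon_2}Q^{m_2}\cdots\beta^{\epsilon_n}Q^{m_n}(\sigma^{\circ k})$; if $w=0$ the monomial vanishes trivially, so assume $w\neq0$, and note that comparing the formal degree of the inner monomial with the definition of excess gives $2m_1-\epsilon_1-\deg w=exc(L)-k<0$. If $\epsilon_1=0$ this reads $2m_1<\deg w$, so $Q^{m_1}(w)=0$ by the instability relation $Q^{s}x=0$ for $2s<\deg x$. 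If $\epsilon_1=1$ it reads $2m_1\leq\deg w$, so $\beta Q^{m_1}(w)=0$: either $2m_1<\deg w$, which is the previous case, or $2m_1=\deg w$, in which case $Q^{m_1}(w)=w^{p}$ and $\beta Q^{m_1}(w)=\beta(w^{p})=0$. Summing over the finitely many monomials produced by Lemma~\ref{lm:relation E and Q} gives the corollary. When $k=0$ this amounts to the observation that the strings $L$, all of negative excess, already die in $T=\mathcal{F}/I_{exc}$, hence in $R$.

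I do not anticipate a serious obstacle; the one point that requires care is the combinatorial bookkeeping inside the proof of Lemma~\ref{lm:relation E and Q}, namely that every ``lower order'' monomial produced when the circle products are expanded via Theorem~\ref{thm:action of DL on HQSn} and the Nishida relations retains leading Bockstein exponent $\epsilon_1$ and has total excess $<k$, so that the single instability estimate above applies to all of them uniformly. It is worth remarking that this corollary is exactly the ``sub-boundary'' companion of the relations \eqref{eq:relation about excess_sigma} and \eqref{eq:relation of sigma}, which handle the critical case $2i_1+b(I)=k$.
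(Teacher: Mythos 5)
Your argument is correct: expanding the circle product by Lemma~\ref{lm:relation E and Q}, every monomial $\beta^{\epsilon_1}Q^{m_1}\cdots\beta^{\epsilon_n}Q^{m_n}(\sigma^{\circ k})$ that occurs has excess at most $2i_1+b(I)<k$, and your degree count ($2m_1-\epsilon_1-\deg w=exc(L)-k<0$ for the inner factor $w$, if nonzero) lets the instability relations $Q^{s}x=0$ for $2s<\deg x$ and $\beta Q^{s}x=\beta(x^{\star p})=0$ for $2s=\deg x$ kill each term; admissibility is indeed irrelevant, and the side claim about the leading Bockstein exponent is not even needed since you treat both values of $\epsilon_1$. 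This is, however, not quite the route the paper intends: the paper deduces the corollary from Proposition~\ref{pro:relation of E(s)} and the primitivity of $\sigma$ --- at the critical value $k_0=2i_1+b(I)+\epsilon_1$ the computation behind \eqref{eq:relation about excess_sigma} and \eqref{eq:relation of sigma} shows the circle product is either zero or the $\star$-decomposable $y^{\star p}$, and circling with the remaining $(k-k_0)$ copies of the primitive element $\sigma$ annihilates $\star$-decomposables by the distributivity law. Your version is more direct (no appeal to distributivity or primitivity) at the cost of invoking the excess/instability relations for the Dyer--Lashof action explicitly; note that the paper already uses exactly this vanishing implicitly in the proof of Proposition~\ref{pro:relation of E(s)} when it discards the terms $Q^{K}(\sigma^{\circ k})$ with $exc(K)<k$, so the two arguments rest on the same underlying facts and differ only in which step is made the engine of the proof.
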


Let us put $\underline{E}^\epsilon(s^{p-1})=s^{-1}E^\epsilon(s^{p-1})\in H_*QS^0[[s]]$, equalities \eqref{eq:relation E(s) 2}-\eqref{eq: relation E(s) 3} can be simplified as follows.
\begin{cor}\label{cor:reduced relation of E(s)}
For $s,t$ are formal variables, then we have relation
\begin{equation}\label{eq:reduced relation of E(s)}
\underline{E}^{\epsilon_1}(s^{p-1})\circ\underline{E}^{\epsilon_2}(t^{p-1})=\underline{E}^{\epsilon_1}(s^{p-1})\circ\underline{E}^{\epsilon_2}((s+t)^{p-1}),\quad \epsilon_1\leq \epsilon_2.
\end{equation}
\end{cor}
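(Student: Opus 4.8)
The plan is to derive \eqref{eq:reduced relation of E(s)} directly from the three relations \eqref{eq:relation E(s) 2}, \eqref{eq:relation E(s) 4}, \eqref{eq: relation E(s) 3} established in Proposition \ref{pro:relation of E(s)}, simply by rewriting them in terms of $\underline{E}^\epsilon(s^{p-1}) = s^{-1} E^\epsilon(s^{p-1})$. The key observation is that dividing by the appropriate powers of the formal variables turns the inhomogeneous factor $\frac{t}{s+t}$ appearing in \eqref{eq:relation E(s) 4} and \eqref{eq: relation E(s) 3} into the identity, unifying all three cases.

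Concretely, I would treat the three cases $(\epsilon_1,\epsilon_2) = (0,0),\ (0,1),\ (1,1)$ separately. For $(\epsilon_1,\epsilon_2)=(0,0)$: multiply both sides of \eqref{eq:relation E(s) 2} by $s^{-1}t^{-1}$; since the right-hand side of \eqref{eq:relation E(s) 2} has no extra scalar factor, this is immediate and gives $\underline{E}^0(s^{p-1})\circ\underline{E}^0(t^{p-1}) = \underline{E}^0(s^{p-1})\circ\underline{E}^0((s+t)^{p-1})$. For $(\epsilon_1,\epsilon_2)=(0,1)$: start from \eqref{eq:relation E(s) 4}, which reads $E^0(s^{p-1})\circ E^1(t^{p-1}) = E^0(s^{p-1})\circ E^1((s+t)^{p-1})\frac{t}{s+t}$; multiply both sides by $s^{-1}t^{-1}$. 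The left side becomes $\underline{E}^0(s^{p-1})\circ\underline{E}^1(t^{p-1})$ (using $t^{-1}E^1(t^{p-1}) = \underline{E}^1(t^{p-1})$), while the right side becomes $s^{-1}t^{-1}E^0(s^{p-1})\circ E^1((s+t)^{p-1})\frac{t}{s+t} = s^{-1}(s+t)^{-1}E^0(s^{p-1})\circ E^1((s+t)^{p-1}) = \underline{E}^0(s^{p-1})\circ\underline{E}^1((s+t)^{p-1})$, since the $t$ cancels and $(s+t)^{-1}$ is exactly what converts $E^1((s+t)^{p-1})$ into $\underline{E}^1((s+t)^{p-1})$. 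The case $(\epsilon_1,\epsilon_2)=(1,1)$ is handled identically starting from \eqref{eq: relation E(s) 3}, multiplying by $s^{-1}t^{-1}$ and using the same cancellation of $t$ against $(s+t)^{-1}$.

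There is essentially no obstacle here; the only point requiring a moment's care is bookkeeping with the negative powers of the formal variables, namely verifying that $s^{-1}t^{-1}\cdot\frac{t}{s+t} = s^{-1}(s+t)^{-1}$ as an identity of formal power series (which is valid after expanding $(s+t)^{-1}$ appropriately, or equivalently working in the ring of formal Laurent-type expressions in which these transfers naturally live, as noted after the proof of Proposition \ref{pro:relation of E(s)}). One should also note that the constraint $\epsilon_1\leq\epsilon_2$ in the statement is exactly the list of cases covered: the symmetry of the circle product means the case $(\epsilon_1,\epsilon_2)=(1,0)$ is obtained from $(0,1)$ by interchanging the roles of $s$ and $t$, so no separate argument is needed. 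Collecting the three computations completes the proof.
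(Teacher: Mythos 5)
Your cases $(\epsilon_1,\epsilon_2)=(0,1)$ and $(1,1)$ are correct and are exactly the rewriting the paper intends (the paper gives no separate argument for this corollary: it is presented as an immediate reformulation of \eqref{eq:relation E(s) 4} and \eqref{eq: relation E(s) 3}, and your bookkeeping $t^{-1}\cdot\frac{t}{s+t}=(s+t)^{-1}$ is precisely how the factor $\frac{t}{s+t}$ gets absorbed into the shift defining $\underline{E}^1$).

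The case $(0,0)$, however, does not go through as you wrote it. Multiplying \eqref{eq:relation E(s) 2} by $s^{-1}t^{-1}$ gives $\underline{E}^0(s^{p-1})\circ\underline{E}^0(t^{p-1})=s^{-1}t^{-1}E^0(s^{p-1})\circ E^0((s+t)^{p-1})$, and the right-hand side is \emph{not} $\underline{E}^0(s^{p-1})\circ\underline{E}^0((s+t)^{p-1})=s^{-1}(s+t)^{-1}E^0(s^{p-1})\circ E^0((s+t)^{p-1})$: you must trade $t^{-1}$ for $(s+t)^{-1}$, and in this case there is no compensating factor $\frac{t}{s+t}$ in \eqref{eq:relation E(s) 2} to pay for the trade, so the step you call ``immediate'' is exactly where the argument breaks. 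In fact, if \eqref{eq:reduced relation of E(s)} held for $(0,0)$ with $\underline{E}^0(s^{p-1})=s^{-1}E^0(s^{p-1})$, then comparing with \eqref{eq:relation E(s) 2} would force $t^{-1}X=(s+t)^{-1}X$, i.e. $sX=0$, for $X=E^0(s^{p-1})\circ E^0(t^{p-1})$, which is impossible since the constant term of $X$ is $[p]\circ[p]=[p^2]\neq0$. The coherent reading (and the one matching the later use of the corollary) is that the shift by the variable is only carried by the $\epsilon=1$ series, i.e. effectively $\underline{E}^{\epsilon}(s^{p-1})=s^{-\epsilon}E^{\epsilon}(s^{p-1})$; with that reading the $(0,0)$ case is verbatim \eqref{eq:relation E(s) 2} and there is nothing to prove, but then one should say so rather than multiply by $s^{-1}t^{-1}$. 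Separately, your closing remark that $(1,0)$ would follow from $(0,1)$ by commutativity of $\circ$ after swapping $s$ and $t$ is not right: the swap puts the shift $t\mapsto s+t$ in the other argument, and the $(1,0)$ form is genuinely excluded (the coefficient of $\epsilon$ in the $GL_2$-invariance of $tr^{(2)}_*$ produces an extra cross term, which is why the hypothesis $\epsilon_1\leq\epsilon_2$ appears); this does not affect the cases you need, but the justification offered is misleading.
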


For $A\in GL_{\ell}, B\in GL_k$, denote $A\oplus B=\left(\begin{smallmatrix}A&0\\0&B\end{smallmatrix}\right)\in GL_{\ell+k}$ and $a\oplus A=\left(\begin{smallmatrix}a&0\\0&A\end{smallmatrix}\right)\in GL_{\ell+1}$. Then we have the lemma.
\begin{lem}\label{lm:generators of G}
For $n\geq 2$, the general linear group $GL_n=GL_n(\fp)$ is generated by $\{T, \Sigma_n, T_a:a\in \fp^*\}$, where
\[
T=\left(\begin{smallmatrix}1&1\\0&1\end{smallmatrix}\right)\oplus I_{n-2}, \quad T_a=a\oplus I_{n-1}.
\]
\end{lem}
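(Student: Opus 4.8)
The plan is to reduce the statement to the classical fact that elementary transvections generate the special linear group. Write $E_{ij}(\lambda)=I_n+\lambda e_{ij}$ for the transvection with $i\neq j$, where $e_{ij}$ is the matrix unit; note that $T=E_{12}(1)$, and that the permutation matrices $\Sigma_n\subset GL_n$ already belong to the proposed generating set. First I would conjugate $T$ by permutation matrices: if $P_\pi$ is the permutation matrix of $\pi\in\Sigma_n$, then $P_\pi E_{12}(1)P_\pi^{-1}=E_{\pi(1)\pi(2)}(1)$, and since $n\geq2$ every ordered pair $(i,j)$ with $i\neq j$ equals $(\pi(1),\pi(2))$ for some $\pi$. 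Hence $\langle T,\Sigma_n\rangle$ contains all $E_{ij}(1)$, and taking powers $E_{ij}(1)^m=E_{ij}(m)$ it contains $E_{ij}(\lambda)$ for every $\lambda\in\fp$ and every $i\neq j$.

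Next I would appeal to the standard fact that the transvections $E_{ij}(\lambda)$, $\lambda\in\fp$, $i\neq j$, generate $SL_n(\fp)$. The proof is Gaussian elimination carried out using only the operation ``add a multiple of one row to another'' together with its column analogue, i.e. left and right multiplication by transvections: any $A\in SL_n(\fp)$ is reduced this way to a diagonal matrix of determinant $1$ --- when a pivot position is zero one first adds a row whose entry in that column is nonzero --- and a diagonal matrix $\mathrm{diag}(d_1,\dots,d_n)$ with $\prod_i d_i=1$ is itself a product of transvections, using the $SL_2$ identity expressing $\mathrm{diag}(d,d^{-1})$ as a product of three transvections and applying it to successive coordinate pairs. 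Thus $SL_n(\fp)\subseteq\langle T,\Sigma_n\rangle$.

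Finally I would bring in the matrices $T_a$ to account for the determinant. Given any $g\in GL_n$, set $a=\det g\in\fp^*$; since $\det T_a=a$ we get $\det(T_a^{-1}g)=1$, so $T_a^{-1}g\in SL_n(\fp)\subseteq\langle T,\Sigma_n\rangle$. Because $\fp^*$ is a group, $T_a^{-1}=T_{a^{-1}}$ is among the generators, so $g=T_a\,(T_a^{-1}g)$ lies in $\langle T,\Sigma_n,T_a:a\in\fp^*\rangle$, which proves the lemma. The only nonroutine ingredient is the generation of $SL_n(\fp)$ by transvections, and even that is entirely classical; I expect the write-up to be very short, the main thing to get right being the bookkeeping in the conjugation and determinant steps.
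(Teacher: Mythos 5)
Your argument is correct, and it is the standard one: conjugating $T=E_{12}(1)$ by permutation matrices yields all $E_{ij}(1)$, powers give all $E_{ij}(\lambda)$, transvections generate $SL_n(\fp)$, and the matrices $T_a$ account for the determinant via $g=T_a(T_a^{-1}g)$ with $a=\det g$. The paper in fact states this lemma without proof, treating it as a classical fact, so there is nothing to compare against beyond noting that your reduction is the expected one. One tiny inaccuracy in your sketch of the classical step: $\mathrm{diag}(d,d^{-1})$ is not a product of three transvections; three transvections give the antidiagonal matrix $\left(\begin{smallmatrix}0&d\\-d^{-1}&0\end{smallmatrix}\right)$, and $\mathrm{diag}(d,d^{-1})$ is then obtained as such a matrix times the inverse of the one with $d=1$ (so a product of about six transvections). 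This does not affect the validity of the argument, since all that is needed is that determinant-one diagonal matrices are products of transvections.
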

\begin{thm}\label{thm:Hopf ring of QS0}
The homology $\{H_*QS^k\}_{k\geq0}$ is the coalgebraic ring in $\fp[\mathbb Z]$ generated by $E_{(0,i)} (i\geq0),$ $E_{(1,j)} (j\geq 1)$ and $\sigma$ modulo all the relations implied by Proposition \ref{pro:relation of E(s)}.

The coproduct is specified by
\[
\psi(\sigma)=1\otimes \sigma+\sigma\otimes 1;\quad \psi(E^0(s))=E^0(s)\otimes E^0(s);
\]
\[
\psi(E^1(s))=E^0(s)\otimes E^1(s)+E^1(s)\otimes E^0(s);\quad \psi(a\circ b)=\psi(a)\circ \psi(b).
\]
\end{thm}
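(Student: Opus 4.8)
The plan is to show that the canonical surjection from the free coalgebraic ring on the generators $E_{(0,i)}$, $E_{(1,j)}$, $\sigma$ (modulo the relations of Proposition~\ref{pro:relation of E(s)}) onto $\{H_*QS^k\}_{k\geq0}$ is in fact an isomorphism. Surjectivity is already known: by the theorem of Dyer--Lashof and May the spaces $H_*QS^k$ are polynomial on the admissible $Q^I(\sigma^{\circ k})$ of appropriate excess, and Theorem~\ref{thm:describe though dickson coninvariant} together with Lemma~\ref{lm:relation E and Q} re-expresses these polynomial generators in terms of circle products of the $E_{(\epsilon,i)}$'s with $\sigma^{\circ k}$. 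So the content is that no further relations are needed, i.e. the relations listed in Proposition~\ref{pro:relation of E(s)} suffice to reduce an arbitrary element of the free object to the polynomial normal form of Theorem~\ref{thm:describe though dickson coninvariant}.

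First I would set up the combinatorial reduction. A general circle monomial in the generators has the form $\sigma^{\circ k}\circ E_{(\omega_1,t_1)}\circ\cdots\circ E_{(\omega_n,t_n)}$. Using relations \eqref{eq:relation E(s) 2}--\eqref{eq: relation E(s) 3} (equivalently the single relation \eqref{eq:reduced relation of E(s)} of Corollary~\ref{cor:reduced relation of E(s)}, which records exactly $GL_2$-invariance under the elementary matrix $T$) one can move any such monomial into the ``standard shape'' indexed by a string $I=(\epsilon_1,i_1,\dots,\epsilon_n,i_n)$, namely $\sigma^{\circ k}\circ E_{(\epsilon_1,i_1+b(I))}\circ\cdots\circ E_{(\epsilon_n,p^{n-1}(i_1+\cdots+i_n+b(I))-\Delta_n\epsilon_n)}$; this is precisely the translation between $\mathscr{B}[n]^*$-basis elements and circle products that underlies Proposition~\ref{pro:basis of B(p)} and Theorem~\ref{thm:basis in coinvariant}. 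The point is that relation \eqref{eq:reduced relation of E(s)}, being the $T$-invariance and combined with the obvious $\Sigma_n$- and $T_a$-invariance (the latter following from the coproduct formulas and the structure of $E^0,E^1$), generates all of $GL_n$-invariance by Lemma~\ref{lm:generators of G}; hence the circle-monomials modulo \eqref{eq:relation E(s) 2}--\eqref{eq: relation E(s) 3} are governed by $(H_*B\V_n)_{GL_n}$, for which Proposition~\ref{pro:basis of H_GL} gives an explicit basis indexed by strings $I$ with $i_1+b(I)-m(I)\geq0$.

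Next I would bring in the excess conditions. Relation \eqref{eq:relation E(s) 1} identifies $E_{(0,0)}$ with the group-ring element $[p]$, and relations \eqref{eq:relation of sigma}, Corollary~\ref{cor:relations}, and crucially \eqref{eq:relation about excess_sigma} kill exactly those standard monomials whose index string has $2i_1+b(I)+\epsilon_1\leq k$, or has a Bockstein ($\epsilon_n=1$, $b(I)>0$) forcing a $p$-th power. These are precisely the monomials that are \emph{not} in the list of polynomial generators of Theorem~\ref{thm:describe though dickson coninvariant}: Lemma~\ref{lm:bijective 2} gives the bijection between admissible strings $J$ with $\mathrm{exc}(J)>k$ and standard strings $I$ with $2i_1+b(I)>k$, and Remark~\ref{rem:4.8} explains why \eqref{eq:relation about excess_sigma} for $\epsilon_n=0$ or $b(I)=0$ produces nothing new, so that the relation set is non-redundant and complete. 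Putting these two steps together, the free object modulo the stated relations has, as a polynomial algebra (tensored with $\fp[\mathbb Z]$ in the $k=0$ case), exactly the generators of Theorem~\ref{thm:describe though dickson coninvariant}; since the surjection to $\{H_*QS^k\}_{k\geq0}$ carries this spanning set bijectively onto a polynomial basis, it is an isomorphism. The coproduct formulas are immediate: they hold on generators by the computation of $\psi(E^0(s))$, $\psi(E^1(s))$ and primitivity of $\sigma$ recorded just before Proposition~\ref{pro:relation of E(s)}, and $\psi(a\circ b)=\psi(a)\circ\psi(b)$ is the defining compatibility in a coalgebraic ring.

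The main obstacle is the middle step: verifying that the relations \eqref{eq:relation E(s) 2}--\eqref{eq: relation E(s) 3} really do let one reduce an \emph{arbitrary} circle word to a standard monomial indexed by a single admissible-type string, i.e. that no relations beyond $GL$-invariance of the transfer are hidden in iterated circle products of three or more factors. Concretely one must check that the reduction is confluent — that applying \eqref{eq:reduced relation of E(s)} to adjacent pairs in any order lands in the same normal form — and that the resulting count of surviving monomials in each bidegree matches the rank of $H_*QS^k$ coming from the Dyer--Lashof basis. The cleanest way to nail this is dimension counting: one has an explicit basis for the free-modulo-relations side (from Propositions~\ref{pro:basis of H_GL}, \ref{pro:basis of B(p)} and Theorem~\ref{thm:basis in coinvariant} after imposing the excess cuts) and an explicit basis for $H_*QS^k$ (Theorem~\ref{thm:describe though dickson coninvariant}), and Lemmas~\ref{lm:relation E and Q} and \ref{lm:bijective 2} show the surjection matches them up degreewise; so a surjection between graded $\fp$-vector spaces of equal (finite, in each degree) dimension is forced to be an isomorphism.
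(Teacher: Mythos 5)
Your proposal is correct and takes essentially the same route as the paper: the paper also builds the free $\fp[\mathbb{Z}]$-Hopf ring on the coalgebra of generators, notes surjectivity from Theorem \ref{thm:describe though dickson coninvariant}, uses Corollary \ref{cor:reduced relation of E(s)} together with Lemma \ref{lm:generators of G} to factor the map $\bigoplus_n H_*B\V_n\to A_{*,k}$ through the $GL_n$-coinvariants and then, via Corollary \ref{cor:relations} and Propositions \ref{pro:basis of B(p)}, \ref{pro:basis of H_GL} and Theorem \ref{thm:basis in coinvariant}, through $\mathscr{B}_k[n]^*$, and concludes by exactly your dimension-count sandwich on indecomposables using the bijection of Lemma \ref{lm:bijective 2}. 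The only difference is presentational: the paper phrases the argument through the Ravenel--Wilson free Hopf ring functor and the spaces $QA_{*,k}[n]$, rather than through an explicit rewriting/confluence discussion, but the mathematical content coincides with your plan.
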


The theorem can be proved by using the framework of Turner \cite{Tur97} and Eccles. et.al \cite{Eccles1997}, we mean that we can use the method in \cite{Tur97} to show for $k=0$, and then use the bar spectral sequence (as in \cite{Eccles1997}) to induct for $k>0$. Here, we modify the method of Turner \cite{Tur97} to show the theorem directly (without using the bar spectral sequence).
In order to do this, we need some notations.

We define elements $f^0(v_i,s), f^1(u_i,v_i,s)\in H_*B\V_n[[s]]$ for any $u_i,v_i$ by
\[
f^0(v_i,s)=\sum_{k\geq0}v_i^{[k]}s^k;\quad f^1(u_i,v_i,s)=\sum_{k\geq1}u_i v_i^{[k-1]}s^k,
\]
$f^0(0,s)=f^0(v_i,0)=1$. 

Then we have
\[
tr^{(1)}_*(f^0(v_j,s))=E^0(s^{p-1}); \quad tr^{(1)}_*(f^1(u_i,v_i,s))=E^1(s^{p-1}).
\]

Put $\underline{f}^0(v_i,s)=s^{-1}f^0(v_i,s)$ and $\underline{f}^1(u_i,v_i,s)=s^{-1}f^1(u_i,v_i,s)$, then
\[
tr^{(1)}_*(\underline{f}^0(v_j,s))=\underline{E}^0(s^{p-1}); \quad tr^{(1)}_*(\underline{f}^1(u_i,v_i,s))=\underline{E}^1(s^{p-1}).
\]
\begin{proof}
Let $D_{*,0}$ be the graded coalgebra over $\fp$ generated by $E_{(0,i)}\in D_{2i(p-1),0}\ (i\geq0)$, $E_{(1,j)}\in D_{2j(p-1)-1,0}\ (j\geq 1)$, where the coproduct is given as in the theorem. Let $D_{*,1}$ be the graded coalgebra over $\fp$ on the single primitive element $\sigma \in D_{1,1}$ and for $k\geq2$ let $D_{*,k}$ be the graded $\fp$ coalgebra with $D_{0,k}=\fp$ and zero in other dimensions. Actually, $D_{*,0}$ is isomorphic to the homology of $B\Sigma_p$ (see \cite{Adem.etal1990}) and $D_{*,1}$ is isomorphic to the homology of $S^1$. Apply the Ravenel-Wilson free Hopf ring functor \cite{Ravenel.Wilson1977} to the coalgebra $D_{*,*}$ to give $\mathscr{H}D_{*,*}$, the free $\fp[\mathbb{Z}]$-Hopf ring on $D_{*,*}$. There is a map of coalgebras $D_{*,*}\rightarrow \{H_*QS^k\}_{k\geq0}$ mapping the element $E_{(\epsilon,i)}$ to the element $E_{(\epsilon,i)}\in \{H_*QS^k\}_{k\geq0}$. By the universality, the map extends to a unique map of Hopf rings
\[
h \colon \mathscr{H}D_{*,*}\rightarrow \{H_*QS^k\}_{k\geq0}.
\]

Let $A_{*,*}$ be the free $\fp[\mathbb{Z]}$-Hopf ring on $D_{*,*}$ subject to relations arising from Proposition \ref{pro:relation of E(s)}. Since all relations defined in $A_{*,*}$ hold in $\{H_*QS^k\}_{k\geq0}$, $h$ induces a unique map
\[
\bar{h} \colon A_{*,*}\rightarrow \{H_*QS^k\}_{k\geq0}, 
\]
which is surjective by  Theorem \ref{thm:describe though dickson coninvariant}. Therefore, the induced map between  the indecomposables (with respect to $\star$ product) $QA_{*,k}\rightarrow QH_*QS^k$  is also a surjection.  We will prove that it is actually an isomorphism.

First of all, we claim that  for $k\geq0$, there exists an epimorphism $\bigoplus_{n\geq1}\mathscr{B}_k[n]^*\to A_{*,k}$. Indeed, let  $\underline{s}=(s_1,\dots,s_n)$ be an n-tuple of formal variables,  $\underline{\epsilon}=(\epsilon_1,\dots,\epsilon_n)$, with $\epsilon_i\in \{0,1\}$,  define
\[
\underline{u}^{\underline{\epsilon}} \; (\underline{s})=\underline{f}^{\epsilon_1}(u_1,v_1,s_1)\cdots \underline{f}^{\epsilon_n}(u_n,v_n,s_n)\in H_*B\V_n,
\]
where $\underline{f}^0(u_i,v_i,s_i)=\underline{f}^0(v_i,s_i)$. We also denote
\[
\underline{E}^{\underline{\epsilon}}(\underline{s}^{p-1})=\underline{E}^{\epsilon_1}(s_1^{p-1})\circ\cdots\circ \underline{E}^{\epsilon_n}(s_n^{p-1})\in A_{*,0}.
\]

Let $g:\bigoplus_{n\geq1}H_*B\V_n\rightarrow A_{*,0}$ be the map of $\fp$-algebras given by $\underline{u}^{\underline{\epsilon}}(\underline{s})\mapsto \underline{E}^{\underline{\epsilon}}(\underline{s}^{p-1})$. It is easy see that $g$ is a surjection. 

From Corollary \ref{cor:reduced relation of E(s)} and Lemma \ref{lm:generators of G}, it is easy to check that 
 $g(\underline{u}^{\underline{\epsilon}}A(\underline{s}))=\underline{E}^{\underline{\epsilon}}(\underline{s}^{p-1})=g(\underline{u}^{\underline{\epsilon}}(\underline{s}))$, for $A\in GL_n$. Therefore, $g$ factors through the coinvariants space of the general linear groups $\bigoplus_{n\geq1}(H_*B\V_n)_{GL_n}$.

Moreover, from Proposition \ref{pro:basis of B(p)}, elements $E_{(\epsilon_1,i_1,\dots,\epsilon_n,i_n)}\in (H_*B\V_n)_{GL_n}$, for $2i_1+b(I)<0$, represent the zero element in $\mathscr{B}_0[n]^*$. Therefore, from Theorem \ref{thm:basis in coinvariant} and Proposition \ref{pro:basis of H_GL}, they can be written as a combination of elements of the form 
$$[u_1^{\omega_1}v_1^{[(p-1)(j_1+\omega)-\omega_1]}\cdots u_n^{\omega_n}v_n^{[p^{n-1}(p-1)(j_1+\cdots+j_n+\omega)-p^{n-1}\omega_n]}],$$
for $\omega_i=0$ or $1$, $\omega=\omega_1+\cdots+\omega_n$ and $2j_1+\omega<0$.

Combining with the fact that $g$ is an algebra homomorphism, we get that the image of $E_{(\epsilon_1,i_1,\dots,\epsilon_n,i_n)}, 2i_1+b(I)<0$, under $g$ can be written as a combination of the elements of the form
$
E_{(\omega_1,j_1+\omega)}\circ\cdots\circ E_{(\omega_n,p^{n-1}(j_1+\cdots+j_n+\omega)-\Delta_n\omega_n)},
$
with $2j_1+\omega<0$, $\omega=\omega_1+\cdots+\omega_n$.
Therefore, from Corollary \ref{cor:relations}, one gets $g(E_{(\epsilon_1,i_1,\dots,\epsilon_n,i_n)})=0$ for $2i_1+b(I)<0$.

Hence, by Proposition \ref{pro:basis of B(p)}, $g$ factors through $\bigoplus_{n\geq1}\mathscr{B}_0[n]^*$. In other words, there exists homomorphism $\bar{g}:\bigoplus_{n\geq1}\mathscr{B}[n]^*\to A_{*,0}$ such that the diagram
\[
\bfig
\Vtriangle/>`>`<-/[\bigoplus_{n\geq1}H_*B\V_n`A_{*,0}`\bigoplus_{n\geq1}\mathscr{B}_0{[n]}^*;g`p`\overline{g}]
\efig
\]
is commutative.

For any $k\geq0$, let $g_k$ be the composition
\[
\bigoplus_{n\geq1}\mathscr{B}_0[n]\xrightarrow{\bar{g}}A_{*,0}\xrightarrow{\sigma^{\circ k}\circ -}A_{*,k}.
\]
When $k=0$, $g_0$ is just $\bar{g}$. Since, from Corollary \ref{cor:relations}, in $A_{*,k}$, $$\sigma^{\circ k}\circ E_{(\epsilon_1,i_1+b(I))}\circ\cdots\circ E_{(\epsilon_n,p^{n-1}(i_1+\cdots+i_n+b(I))-\Delta_n\epsilon_n)}=0, 2i_1+b(I)<k,$$ by the same above argument, the $\fp$-map $g_k$ factors through $\bigoplus_{n\geq1}\mathscr{B}_k[n]$ and $g_k$ is also a surjection.

For any $n\geq1$, let $QA_{*,k}[n]$ be the subspace of $QA_{*,k}$ spanned by all elements $\sigma^{\circ k}\circ E_{(\epsilon_1,i_1)}\circ\cdots\circ E_{(\epsilon_n,i_n)}$ and let $QH_*QS^k[n]$ be the subspace of $QH_*QS^k$ spanned by all elements $\beta^{\epsilon_1}Q^{j_1}\cdots\beta^{\epsilon_n}Q^{j_n}(\sigma^{\circ k})$.

By Theorem \ref{thm:basis in coinvariant}, in $\mathscr{B}_k[n]^*$, we have
\begin{align*}
{\rm Span}&\{E_{(\epsilon_1,i_1,\dots,\epsilon_n,i_n)}:2i_1+b(I)+\epsilon_1>k\}=\\
&{\rm Span}\{[u_1^{\epsilon_1}v_1^{[(p-1)(i_1+b(I))-\epsilon_1]}\cdots u_n^{\epsilon_n}v_n^{[p^{n-1}(p-1)(i_1+\cdots+i_n+b(I))-p^{n-1}\epsilon_n]}]:\\
&\quad\quad\quad\quad 2i_1+b(I)+\epsilon_1>k\}.
\end{align*}

Therefore, we have a surjection
\begin{align*}
S={\rm Span}&\{[u_1^{\epsilon_1}v_1^{[(p-1)(i_1+b(I))-\epsilon_1]}\cdots u_n^{\epsilon_n}v_n^{[p^{n-1}(p-1)(i_1+\cdots+i_n+b(I))-p^{n-1}\epsilon_n]}]:\\
&\quad\quad\quad\quad n\geq1,2i_1+b(I)+\epsilon_1>k\}\rightarrow QA_{*,k}[n].
\end{align*}
 It implies that, in each degree $d$, $\dim(S)\geq \dim(QA_{*,k}[n])\geq \dim(QH_*QS^k[n])$.
 
 Finally, we observe that, for each degree $d$,
 \begin{align*}
 {\rm Card}&\{(\epsilon_1,i_1,\dots,\epsilon_n,i_n)|2(i_1+\epsilon_1)(p^n-1)-\epsilon_1+\cdots\\
& +2(i_n+\epsilon_n)(p^n-p^{n-1})-\epsilon_n=d\}\\
={\rm Card}&\{(\epsilon_1,i_1,\dots,\epsilon_n,i_n)|\epsilon_1+2((p-1)(i_1+b(I))-\epsilon_1)+\cdots\\
&+\epsilon_n+2p^{n-1}((p-1)(i_1+\cdots+i_n+b(I))-\epsilon_n)=d\}.
 \end{align*}
 So $\dim(S)= \dim(QA_{*,k}[n])= \dim(QH_*QS^k[n])$. It implies  $QA_{*,k}\cong QH_*QS^k$.
 
 The proof is complete.
\end{proof}

\section{The actions of $A$ and $R$ on $H_*QS^k$}\label{sec: action}
The Hopf ring $\{H_*QS^k\}_{k\geq0}$ possesses a very rich structure, namely, it is an $A$-$R$-algebra, where $A$ is again the mod $p$ Steenrod algebra and $R$ is the mod $p$ Dyer-Lashof algebra. Further, the two actions are compatible in the sense made precise below. For convenience, we write $P^k$ instead of $P^k_*$ and write their action on the right. For $x\in H_*QS^k$ (or $x\in H_*B\V_n$) and formal variable $s$, we define the formal series
\[
xP^{\epsilon}(s)=\sum_{k\geq0}(x\beta^{\epsilon}P^k)s^k, \epsilon=0,1.
\]

In order to prove the main theorem of this section, we need the following lemma.
\begin{lem}\label{lm:relation P(s) and Q(s)}
For $x,y\in H_*QS^k$ and $u_i,v_i\in H_*B\V_n$ given in Section \ref{sec:Basis}, the following relations hold:
\begin{equation}\label{eq:P act on HBV 1}
x\circ Q^\epsilon(s)(y)=Q^\epsilon(s)(xP^0(s^{-1})\circ y)-\epsilon(-1)^{{\rm deg}y}Q^0(s)(xP^{1}(s^{-1})\circ y);
\end{equation}
\begin{equation}\label{eq: P act on HBV 2}
f^0(v_i,s)P^0(t)=f^0(v_i,(s+s^pt));
\end{equation}
\begin{equation}\label{eq: P act on HBV 3}
\underline{f}^0(v_i,s)P^1(t)=\underline{f}^1(u_i,v_i,(s+s^pt));
\end{equation}
\begin{equation}\label{eq: P act on HBV 4}
\underline{f}^1(u_i,v_i,s)P^0(t)=\underline{f}^1(u_i,v_i,(s+s^pt)).
\end{equation}
\end{lem}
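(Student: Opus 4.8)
The plan is to prove the four displayed formulas in Lemma~\ref{lm:relation P(s) and Q(s)} essentially independently, since \eqref{eq:P act on HBV 1} concerns the interaction of the Steenrod action with the circle product and the Dyer--Lashof operations in $H_*QS^k$, while \eqref{eq: P act on HBV 2}--\eqref{eq: P act on HBV 4} are purely computations inside $H_*B\V_n$. For \eqref{eq:P act on HBV 1} I would start from the Cartan-type formula in Theorem~\ref{thm:action of DL on HQSn}(i), $P^s_*(b\circ f)=\sum_i P^i_*(b)\circ P^{s-i}_*(f)$, together with parts (ii) and (iii), which express $Q^s(b)\circ f$ and $\beta Q^s(b)\circ f$ in terms of $Q^{s+i}(b\circ P^i_*(f))$ and $\beta Q^{s+i}(\cdots)$. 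Packaging the $Q^i$ and $\beta Q^i$ into the generating functions $Q^0(s)$, $Q^1(s)$ and the $P^i_*$ into $P^0(s)$, $P^1(s)$, the two sums in Theorem~\ref{thm:action of DL on HQSn}(ii),(iii) become the substitution/shift appearing on the right of \eqref{eq:P act on HBV 1}; the signs and the term with $\epsilon(-1)^{\deg y}$ come from the Bockstein being a derivation with respect to $\circ$ (Theorem~\ref{thm:action of DL on HQSn}(i), second formula). Concretely I would write $x\circ Q^0(s)(y)=\sum_{s',i}(Q^{s'}x)\circ\,\cdots$ — actually, it is cleaner to move $Q^\epsilon(s)$ to the outside: substituting $t\mapsto s^{-1}$ in the variable recording $P^i_*$ converts $\sum_i Q^{s+i}(b\circ P^i_*f)$ into $Q^0(s)\bigl(\,(\,\sum_i (bP^i_*)s^{-i})\circ f\,\bigr)=Q^0(s)(xP^0(s^{-1})\circ y)$, and the $\epsilon=1$ case adds the second term from the two sums in part (iii).

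For \eqref{eq: P act on HBV 2}--\eqref{eq: P act on HBV 4} I would work in $H_*B\V_1=E(u)\otimes\Gamma(v)$ and use duality with $H^*B\V_1=E(e)\otimes\fp[x]$, where $u,v$ are dual to $e,x$ and $v^{[k]}$ is dual to $x^k$. The dual Steenrod operations $P^k_*$ on $H_*B\V_1$ are determined by the (well-known) total Steenrod power on $H^*B\V_1$: $P(x)=x+x^p$ and $P(e)=e$, hence $P(x^k)=(x+x^p)^k=\sum_j\binom{k}{j}x^{k-j}x^{pj}$, while $\beta$ is dual to the Bockstein $\beta(e)=x$. Dualizing, $v^{[k]}\beta^0 P^j=\binom{k-(p-1)j}{j}v^{[k-(p-1)j]}$ and the Bockstein-twisted operation sends a $v$-power to a $u\,v$-power. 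Substituting these into the definitions $f^0(v,s)=\sum_k v^{[k]}s^k$, $f^1(u,v,s)=\sum_k uv^{[k-1]}s^k$, $\underline f^\epsilon=s^{-1}f^\epsilon$, the resulting generating-function identities are exactly the statements $f^0(v,s)P^0(t)=f^0(v,s+s^p t)$, $\underline f^0(v,s)P^1(t)=\underline f^1(u,v,s+s^p t)$, $\underline f^1(u,v,s)P^0(t)=\underline f^1(u,v,s+s^p t)$; the binomial identity that makes this work is the ``Cartan'' expansion $\sum_k v^{[k]}\bigl(\sum_j\binom{\cdot}{\cdot}t^j\bigr)s^k = \sum_k v^{[k]}(s+s^p t)^k$ applied degreewise, using the divided-power multiplication $v^{[a]}v^{[b]}=\binom{a+b}{a}v^{[a+b]}$ to re-collect terms. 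One convenient bookkeeping device is to note $f^0(v,s)=\exp_\Gamma(vs)$ in the divided-power algebra, so that $P^0(t)$ acts by $vs\mapsto vs+ (vs)\cdot(\text{contribution of }x\mapsto x+x^pt)$, i.e. by the substitution $s\mapsto s+s^p t$, and $P^1(t)$ is the same substitution followed by replacing the leading $v$ by $u$.

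The main obstacle I expect is purely in the bookkeeping of \eqref{eq:P act on HBV 1}: making the two finite sums in Theorem~\ref{thm:action of DL on HQSn}(ii),(iii) collapse to a single clean substitution $xP^\epsilon(s^{-1})$ requires carefully tracking which variable records the Dyer--Lashof index and which records the Steenrod index, choosing the substitution $t\mapsto s^{-1}$ consistently, and getting the sign $(-1)^{\deg y}$ and the factor $\epsilon$ right in the Bockstein case — the statement is an identity of formal Laurent series in $s$, so I must also make sure the substituted series are well-defined (each coefficient is a finite sum). For \eqref{eq: P act on HBV 2}--\eqref{eq: P act on HBV 4} the only subtlety is the binomial coefficient identity $\binom{k-(p-1)j}{j}$ matching the coefficient of $s^{k-(p-1)j}t^j$ in $s^{-k'}(s+s^pt)^{k'}$ for the right $k'$; this is routine but I would state it explicitly as a lemma-internal computation rather than grinding it out in the main text. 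Once these generating-function identities are in place, they feed directly into Section~\ref{sec: action}: applying $tr^{(1)}_*$ (which is an algebra map sending $f^0(v,s)\mapsto E^0(s^{p-1})$, $f^1(u,v,s)\mapsto E^1(s^{p-1})$ and intertwines $P^\epsilon$ on $H_*B\V_1$ with $P^\epsilon$ on $H_*QS^0$ by naturality of Steenrod operations under $tr^{(1)}$) will translate \eqref{eq: P act on HBV 2}--\eqref{eq: P act on HBV 4} into the desired formulas for the Steenrod action on $E^\epsilon(s)$.
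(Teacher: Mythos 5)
Your proposal follows essentially the same route as the paper: \eqref{eq:P act on HBV 1} is obtained by packaging Theorem \ref{thm:action of DL on HQSn}(ii),(iii) into generating functions and re-indexing (the paper justifies this by observing that $\beta^\epsilon Q^{\ell}(xP^i\circ y)$ vanishes for $\ell<\epsilon+i$ since $\deg(xP^i\circ y)\geq 2i$, which is exactly the finiteness/bookkeeping point you flag), and \eqref{eq: P act on HBV 2}--\eqref{eq: P act on HBV 4} follow from the explicit dual action $v_i^{[n]}\beta^\epsilon P^k=\binom{n-(p-1)k-\epsilon}{k}u_i^\epsilon v_i^{[n-(p-1)k-\epsilon]}$ (and triviality of $P^k$ on $u_i$) matched against the expansion of $(s+s^pt)^k$, just as you outline. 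The only wobble is the mislabeled intermediate expression where you write $(bP^i_*)$ while quoting $Q^{s+i}(b\circ P^i_*f)$; taking $b=y$, $f=x$ and using commutativity of $\circ$ puts the Steenrod operations on $x$ and yields precisely the stated identity.
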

\begin{proof}
From Theorem \ref{thm:action of DL on HQSn}, we obtain
{\allowdisplaybreaks
\begin{align*}
x\circ Q^\epsilon(s)&(y)\\
&=\sum_{k\geq\epsilon}\beta^\epsilon Q^{k+i}\left(\sum_{i\geq0}xP^i\circ y \right)s^k-\epsilon\sum_{k\geq\epsilon}(-1)^{{\rm deg}y}Q^{k+i}\left(\sum_{i\geq 1}x\beta P^i\circ y\right)s^k\\
&=\sum_{\ell\geq\epsilon+i}\beta^\epsilon Q^{\ell}\left(\sum_{i\geq0}xP^i\circ y \right)s^{\ell-i}-\epsilon\sum_{\ell\geq i}(-1)^{{\rm deg}y}Q^{\ell}\left(\sum_{i\geq 1}x\beta P^i\circ y\right)s^{\ell-i}
\end{align*}
}

It should be noted that if $xP^i$ (respect, $x\beta P^i$) is nontrivial then the degree of $xP^i\circ y$ (respect, $x\beta P^i\circ y$), is not less than $2i$ (respect, $2i+1$). It implies that, when $\ell<\epsilon+i$ (respect, $\ell<i$) then $\beta^\epsilon Q^{\ell}(xP^i\circ y)$ (respect, $Q^\ell(x\beta P^i\circ y)$) is trivial. 

Therefore, the right hand side of the above
formula can be written as follows
\begin{align*}
\sum_{\ell\geq\epsilon}\beta^\epsilon Q^{\ell}&\left(\sum_{i\geq0}(xP^i\circ y)s^{-i} \right)s^{\ell}-\epsilon\sum_{\ell\geq 0}(-1)^{{\rm deg}y}Q^{\ell}\left(\sum_{i\geq 1}(x\beta P^i\circ y)s^{-i}\right)s^{\ell}\\
&=Q^\epsilon(s)(xP^0(s^{-1})\circ y)-\epsilon(-1)^{{\rm deg}y}Q^0(s)(xP^{1}(s^{-1})\circ y).
\end{align*}
Hence, the formula \eqref{eq:P act on HBV 1} is proved.

From
\[
v_i^{[n]}\beta^\epsilon P^k=\binom{n-(p-1)k-\epsilon}{k}u_i^\epsilon v_i^{[n-(p-1)k-\epsilon]}
\]
we have the formulas \eqref{eq: P act on HBV 2} and \eqref{eq: P act on HBV 3}.

Since $P^k$ acts trivially on $u_i$ for $k>0$, then
\[
u_iv_i^{[n-1]}P^k=\binom{n-(p-1)k-1}{k}u_iv_i^{[n-(p-1)k-1]}.
\]
This implies the last formula.
\end{proof}

The main results of the section is the following theorem, which gives a description of the actions of the Dyer-Lashof algebra and the Steenrod algebra on the Hopf ring.
\begin{thm}[{(Compare with \cite[Theorem 5.1]{Tur97})}]\label{thm:action of R on Hopf ring}
Let $x,y\in H_*QS^k$ and let $s,t,t_1,t_2,\dots$ be formal variables; $\underline{t}^{p-1}=(t_1^{p-1},\dots,t_n^{p-1})$, $\underline{\epsilon}=(\epsilon_1,\dots,\epsilon_n)$. The following hold in $H_*QS^k[[s,t,t_1,t_2,\dots]]$.
{\allowdisplaybreaks
\begin{align}
[n] P^\epsilon(s)&=(1-\epsilon)[n].\label{eq:P act on HQS 1}\\
E^0(s^{p-1})P^0(t)&=E^0((s+s^pt)^{p-1}). \label{eq:P act on HQS 2}\\
\underline{E}^0(s^{p-1})P^1(t)&=\underline{E}^1((s+s^pt)^{p-1}). \label{eq:P act on HQS 2'}\\
\underline{E}^1(s^{p-1})P^0(t)&=\underline{E}^1((s+s^pt)^{p-1}).\label{eq:P act on HQS 3}\\
E^1(s^{p-1})P^1(t)&=0.\label{eq:P act on HQS 3'}\\
(x\star y)P^\epsilon(s)&=(-1)^{\epsilon\deg y}xP^\epsilon(s)\star yP^0(s)+\epsilon (xP^0(s))\star yP^1(s).\label{eq:P-cartan star}\\
(x\circ y)P^\epsilon(s)&=(-1)^{\epsilon\deg y}xP^\epsilon(s)\circ yP^0(s)+\epsilon(xP^0(s))\circ yP^1(s).\label{eq:P-cartan circ}\\
Q^\epsilon(s)[n]&=[n]\circ E^\epsilon(-s).\label{eq:Q act on HQS 1}\\
Q^{\epsilon_1}(s^{p-1})E^{\epsilon_2}((st)^{p-1})&= (1+\hat{t}^{p-1})[E^{\epsilon_2}\left((s\hat{t})^{p-1}\right)\circ E^{\epsilon_1}(-s^{p-1})\notag\\
&\quad\quad+\epsilon_1(1-\epsilon_2) E^1\left((s\hat{t})^{p-1}\right)\circ E^0(-s^{p-1})].\label{eq: Q act on HQS 2}\\
Q^\epsilon(s)(x\star y)&=Q^\epsilon(s)x\star Q^0(s)y+\epsilon(-1)^{\epsilon\deg y} Q^0(s)x\star Q^1(s)y.\label{eq: Q cartan star}\\
Q^\epsilon(s)([n]\circ y)&=[n]\circ Q^\epsilon(s)y.\label{eq: Q cartan circ}\\
Q^{\epsilon}(s^{p-1})(E^{\underline{\epsilon}}((s\underline{t})^{p-1}))&=\ (1+\underline{\hat{t}}^{p-1})[E^{\underline{\epsilon}}((s\hat{\underline{t}})^{p-1})\circ E^{\epsilon}(-s^{p-1})\notag\\
&\quad+\epsilon\sum_{i=1}^n(1-\epsilon_i) E^{\underline{\epsilon}_i}((s\hat{\underline{t}})^{p-1})\circ E^{0}(-s^{p-1})].
 \end{align}
 }
 Here we denote by $\hat{t}=\sum_{k\geq0}(-1)^kt^{p^k}$, $\hat{t_i}=\sum_{k\geq0}(-1)^kt_i^{p^k}$, $\underline{\hat{t}}^{p-1}=(\hat{t}_1^{p-1},\dots,\hat{t}_n^{p-1})$, and $\underline{\epsilon}_i$ the vector obtained from $\underline{\epsilon}$ by replacing $\epsilon_i$ by $1$.
\end{thm}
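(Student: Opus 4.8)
\emph{Overview and the Steenrod formulas.} The plan is to reduce each displayed identity to a computation in $H_*B\V_1$ --- already carried out in Lemma~\ref{lm:relation P(s) and Q(s)} --- or in $H_*B\V_2$, transported to $H_*QS^0$ by the transfer $tr^{(1)}_*$, together with the classical (mixed) Cartan formulas of Theorem~\ref{thm:action of DL on HQSn}; the formal substitutions in the statement are precisely what makes these computations close up. Equation~\eqref{eq:P act on HQS 1} is immediate, since $[n]\in H_0QS^0$ and the reduced Steenrod operations and the Bockstein vanish in degree $0$. For \eqref{eq:P act on HQS 2}, \eqref{eq:P act on HQS 2'}, \eqref{eq:P act on HQS 3}: because $tr^{(1)}$ is a map of spaces, $tr^{(1)}_*$ commutes with the dual Steenrod operations, so applying $tr^{(1)}_*$ to \eqref{eq: P act on HBV 2}--\eqref{eq: P act on HBV 4} and using $tr^{(1)}_*(f^0(v_i,r))=E^0(r^{p-1})$ and $tr^{(1)}_*(\underline{f}^1(u_i,v_i,r))=\underline{E}^1(r^{p-1})$ --- valid also for the formal argument $r=s+s^pt$ --- yields the three formulas. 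For \eqref{eq:P act on HQS 3'} one checks in $H_*B\V_1$ that $u_iv_i^{[m]}\beta P^k=0$ for all $k$ (the operation $\beta P^k$ carries $H^*B\V_1$ into its polynomial subalgebra, so its dual annihilates $u_iv_i^{[m]}$), whence $f^1(u_i,v_i,s)P^1(t)=0$ and the formula follows on applying $tr^{(1)}_*$. Finally \eqref{eq:P-cartan star} and \eqref{eq:P-cartan circ} are the generating-function forms of the Cartan formulas for $P^r_*$ and $\beta$ with respect to the loop sum and to the pairing $\mu$ inducing $\circ$; the latter is Theorem~\ref{thm:action of DL on HQSn}(i).

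\emph{Dyer-Lashof action on $[n]$ and the Cartan laws.} Since $E^0(s)=Q^0(-s)[1]$, $E^1(s)=Q^1(-s)[1]$ and $[1]$ is the unit for $\circ$, one has $Q^\epsilon(s)[1]=E^\epsilon(-s)=[1]\circ E^\epsilon(-s)$. Formula~\eqref{eq: Q cartan star} is the classical Cartan formula for the Dyer-Lashof operations over the additive $H$-space structure, together with the Cartan formula for $\beta$; and \eqref{eq: Q cartan circ} holds because $[n]\circ(-)$ is induced by multiplication by $n$ on the spectrum $\Sigma^k\mathbb{S}$, hence by an infinite loop map, so it commutes with each $Q^\epsilon(s)$ (alternatively, specialize Theorem~\ref{thm:action of DL on HQSn}(ii)--(iii) to $b=[n]$ and use $P^i_*[n]=0$ for $i>0$). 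Writing $[n]=[1]^{\star n}$ and combining \eqref{eq: Q cartan star} with the distributivity law \cite[Lemma~1.12]{Ravenel.Wilson1977} and the group-like --- resp.\ $E^0$-primitive --- form of the coproducts on $E^0(s)$, resp.\ $E^1(s)$, then gives \eqref{eq:Q act on HQS 1} for all $n$.

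\emph{The key formula \eqref{eq: Q act on HQS 2} and its $n$-fold version.} Following Turner \cite{Tur97}, the heart of the argument is to compute $Q^{\epsilon_1}$ on a class in the image of $tr^{(1)}_*$ and to recognize the result inside the image of $tr^{(2)}_*$, i.e.\ as a length-$2$ circle product. Writing $E^{\epsilon_2}((st)^{p-1})=Q^{\epsilon_2}(-(st)^{p-1})[1]$, the left-hand side of \eqref{eq: Q act on HQS 2} becomes $Q^{\epsilon_1}(s^{p-1})Q^{\epsilon_2}(-(st)^{p-1})[1]$, the generating series of the length-$2$ Dyer-Lashof monomials $\beta^{\epsilon_1}Q^a\beta^{\epsilon_2}Q^b[1]$. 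One expands this into admissible monomials via the Adem relations \eqref{eq:adem relation 1}--\eqref{eq:adem relation 2}; by Lemma~\ref{lm:relation E and Q} in the case $n=2$, each admissible monomial $\beta^{\epsilon_1}Q^{j_1}\beta^{\epsilon_2}Q^{j_2}[1]$ equals the corresponding circle product $E_{(\epsilon_1,\cdot)}\circ E_{(\epsilon_2,\cdot)}$ up to monomials of strictly smaller excess, and since the correspondence of Lemma~\ref{lm:bijective 2} is triangular, a downward induction on excess --- terminating by the excess bound --- rewrites the whole series in terms of circle products of $E$'s. Carrying the substitutions through in generating-function form produces the stated answer; the reparametrization $t\mapsto s\hat t$ with $\hat t=\sum_{k\geq0}(-1)^kt^{p^k}$ and the prefactor $1+\hat t^{p-1}$ are the formal shadow of the Nishida relations, equivalently of the substitution $r\mapsto r+r^pt$ in \eqref{eq: P act on HBV 2}--\eqref{eq: P act on HBV 4} together with \eqref{eq:P act on HBV 1}. (Conceptually this is the wreath-product mechanism: the $E_\infty$-structure map of $QS^0$, precomposed coordinatewise with $tr^{(1)}$ and restricted along the transitive inclusion $\V_2\hookrightarrow\V_1\wr(\mathbb{Z}/p)\hookrightarrow\Sigma_{p^2}$, factors through $tr^{(2)}$, and the new circle-factor $E^{\epsilon_1}(-s^{p-1})$ comes from the ``top'' copy of $\mathbb{Z}/p$.) The general $n$-fold identity --- the last displayed formula of the theorem --- then follows by induction on $n$: split off the leading circle-factor, apply \eqref{eq: Q act on HQS 2} to it, and commute $Q^\epsilon(s^{p-1})$ past the remaining factors using the mixed Cartan formulas Theorem~\ref{thm:action of DL on HQSn}(ii)--(iii), evaluating the $P^i_*$-terms that appear by \eqref{eq:P act on HQS 2}--\eqref{eq:P act on HQS 3'}; the substitutions are arranged so that the sum telescopes to the displayed formula.

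\emph{Main obstacle.} The real work is entirely in \eqref{eq: Q act on HQS 2}: tracking, through the Adem relations, the conversion of admissible monomials to circle products, and the lower-excess induction, how the signs, the exponent arithmetic, and above all the odd-primary Bockstein correction term $\epsilon_1(1-\epsilon_2)E^1((s\hat t)^{p-1})\circ E^0(-s^{p-1})$ assemble into a single compact generating-function identity. This Bockstein term has no counterpart in Turner's $p=2$ computation, so it requires genuinely new bookkeeping; everything else is degree counting, naturality of Steenrod operations under $tr^{(1)}_*$, or a repackaging of the classical Cartan and mixed-Cartan formulas.
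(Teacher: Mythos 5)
Most of the routine identities in your proposal are handled correctly and essentially as in the paper: the Steenrod formulas \eqref{eq:P act on HQS 2}--\eqref{eq:P act on HQS 3'} via naturality of $tr^{(1)}_*$ and Lemma \ref{lm:relation P(s) and Q(s)}, the Cartan-type formulas \eqref{eq:P-cartan star}, \eqref{eq:P-cartan circ}, \eqref{eq: Q cartan star}, and your derivations of \eqref{eq:Q act on HQS 1} from $[n]=[1]^{\star n}$ with the (co)product structure of $E^0, E^1$, and of \eqref{eq: Q cartan circ} from the fact that $[n]\circ(-)$ is induced by an infinite loop map, are acceptable variants of the paper's arguments (the paper instead specializes \eqref{eq:P act on HBV 1} at $y=[1]$, $x=[n]$, and uses the distributivity law).

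The genuine gap is in the two formulas that carry all the content, \eqref{eq: Q act on HQS 2} and its $n$-fold version. For these you only describe a strategy: expand $Q^{\epsilon_1}(s^{p-1})Q^{\epsilon_2}(-(st)^{p-1})[1]$ by the Adem relations, convert admissible monomials to circle products by Lemma \ref{lm:relation E and Q} and the triangularity of Lemma \ref{lm:bijective 2}, and then assert that ``carrying the substitutions through in generating-function form produces the stated answer.'' That assertion is precisely what has to be proved: the triangular change of basis only shows that \emph{some} rewriting in circle products exists, it does not identify the coefficients, i.e.\ the prefactor $(1+\hat{t}^{p-1})$, the reparametrization by $\hat{t}$, the signs, and above all the correction term $\epsilon_1(1-\epsilon_2)E^1((s\hat{t})^{p-1})\circ E^0(-s^{p-1})$ --- and you yourself flag exactly this as the unresolved ``main obstacle.'' The same applies to the final formula, whose claimed telescoping induction is not carried out. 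Note also that the paper's own proof requires none of the Adem/excess bookkeeping you propose: specializing \eqref{eq:P act on HBV 1} to $y=[1]$ gives \eqref{eq:replace y=[1]}; putting $x=E^{\epsilon'}(u^{p-1})$ there and inserting the already-established \eqref{eq:P act on HQS 2}--\eqref{eq:P act on HQS 3'} yields \eqref{eq:actions}, which is then simply inverted by the substitution $t=u/s+(u/s)^p$ (so that $\hat{t}=u/s$); the Bockstein term falls out of the summand $-\epsilon(1-\epsilon')Q^0(s)(\cdots)$, and the $n$-fold statement follows by the same inversion using the $(n-1)$-fold coproduct of $P^\epsilon(s)$. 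You gesture at this route in a parenthetical remark but do not execute it, so as written the central identities of the theorem remain unproved in your proposal.
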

\begin{proof}
The first equality is immediate by degree.

Since $tr^{(1)}_*(v^{[n]}P^k)=tr^{(1)}_*(v^{[n]})P^k$, then equalities \eqref{eq:P act on HQS 2}-\eqref{eq:P act on HQS 3'} are implied from \eqref{eq: P act on HBV 2}, \eqref{eq: P act on HBV 3} and \eqref{eq: P act on HBV 4}.

Since the coproduct of $P^{\epsilon}(s)$ is given by
\[
\psi(P^\epsilon(s))=P^\epsilon(s)\otimes P^0(s)+\epsilon P^0(s)\otimes P^1(s),
\]
the formulas \eqref{eq:P-cartan star} and \eqref{eq:P-cartan circ} come from the Cartan formula.

Letting $y=[1]$ in \eqref{eq:P act on HBV 1} to obtain
\begin{equation}\label{eq:replace y=[1]}
x\circ Q^\epsilon(s)[1]=Q^\epsilon(s)(xP^0(s^{-1}))-\epsilon Q^0(s)(xP^1(s^{-1})).
\end{equation}

Letting $x=[n]$ in the above equality and combining with \eqref{eq:P act on HQS 1}, we obtain \eqref{eq:Q act on HQS 1}.

Replace $x=E^{\epsilon'}(u^{p-1})$ in \eqref{eq:replace y=[1]}, we get
\begin{align*}
E^{\epsilon'}(u^{p-1})\circ Q^\epsilon(s)[1]&=Q^\epsilon(s)(E^{\epsilon'}(u^{p-1})P^0(s^{-1}))\\
&\quad\quad-\epsilon Q^0(s)(E^{\epsilon'}(u^{p-1})P^1(s^{-1})).
\end{align*}
Combining with \eqref{eq:P act on HQS 2}-\eqref{eq:P act on HQS 3'}, we give
\begin{align}
E^{\epsilon'}(u^{p-1})\circ Q^\epsilon(s)[1]&=(1+u^{p-1}t^{-1})^{-\epsilon'}
[Q^\epsilon(s)(E^{\epsilon'}(u+u^ps^{-1})^{p-1})\notag\\
&\quad\quad-\epsilon(1-\epsilon')Q^0(s)(E^{1}(u+u^ps^{-1})^{p-1})].\label{eq:actions}
\end{align}
From \eqref{eq:actions}, letting $\epsilon=0$ and $\epsilon'=1$, one gets
\[
E^1(u^{p-1})\circ Q^0(s)[1]=(1+u^{p-1}t^{-1})^{-1}
Q^0(s)(E^{1}(u+u^ps^{-1})^{p-1}).
\]

These formulas imply (replacing $s$ by $s^{p-1}$)
\begin{align*}
Q^{\epsilon_1}(s^{p-1})&E^{\epsilon_2}((u+u^ps^{1-p})^{p-1})\\
&=(1+u^{p-1}s^{1-p})[E^{\epsilon_2}(u^{p-1})\circ Q^{\epsilon_1}(s^{p-1})[1]+\epsilon_1(1-\epsilon_2)E^1(u^{p-1})\circ Q^0(s^{p-1})[1]].
\end{align*}
By letting $t=u/s+(u/s)^p$ with noting that $\hat{t}=\sum_{k\geq0}(-1)^kt^{p^k}=u/s$, it is easy to write the equality in the form
\begin{align*}
Q^{\epsilon_1}(s^{p-1})&E^{\epsilon_2}((st)^{p-1})\\
=&\ (1+\hat{t}^{p-1})[E^{\epsilon_2}\left((s\hat{t})^{p-1}\right)\circ E^{\epsilon_1}(-s^{p-1})+\epsilon_1(1-\epsilon_2) E^1\left((s\hat{t})^{p-1}\right)\circ E^0(-s^{p-1})].
\end{align*}
So \eqref{eq: Q act on HQS 2} is proved. The equality \eqref{eq: Q cartan star} is just the Cartan formula.

In order to prove formula \eqref{eq: Q cartan circ}, we can use \eqref{eq:P act on HBV 1} and \eqref{eq:P act on HQS 1}, but here we use the distributivity law and the Cartan formula. From \cite{Ravenel.Wilson1977}, we have
\begin{align*}
Q^0(s)([n]\circ y)&=Q^0(s)\left(\sum y'\star\cdots\star y^{(n)}\right)\\
&=\sum (Q^0(s)y')\star\cdots\star(Q^0(s)y^{(n)})=[n]\circ Q^0(s)y;
\end{align*}
and
\begin{align*}
Q^1(s)([n]\circ y)&=Q^1(s)\left(\sum y'\star\cdots\star y^{(n)}\right)\\
&=\sum(Q^1(s)y')\star\cdots\star(Q^0(s)y^{(n)})+\cdots+\sum(Q^0(s)y')\star\cdots\star(Q^1(s)y^{(n)})\\
&=[n]\circ Q^1(s)y.
\end{align*}
Here $(n-1)$-fold coproduct of $y$ is $\sum y'\otimes \cdots\otimes y^{(n)}$.
Thus, \eqref{eq: Q cartan circ} is proved.

Since $(n-1)$-fold coproduct of $P^\epsilon(s)$ is given by
\[
\psi^{n-1}(P^0(s))=P^0(s)\otimes \cdots\otimes P^0(s),
\]
and
\[
\psi^{n-1}(P^1(s))=P^1(s)\otimes \cdots\otimes P^0(s)+\cdots+P^0(s)\otimes \cdots\otimes P^1(s),
\]
the last formula follows from formula \eqref{eq: Q act on HQS 2} and the Cartan formula.

The proof is complete.
\end{proof}
As noted in the introduction, the two categories of $A$-$H_*QS^0$-coalgebraic modules and $A$-$R$-allowable Hopf algebras seems to be fundamental  in the study of the mod $p$ homology of the infinite loop spaces. We will investigate these categories and their relationship  elsewhere.
\subsection*{Acknowledgements} The author would like to thank L\^e Minh H\`a and J. Peter May for many fruitful discussion, and Takuji Kashiwabara for sharing his insight and for many comments on an earlier version of this paper. The author is grateful to the referee for helpful comments and corrections. The paper was completed while the author was visiting the Vietnam Institute for Advanced Study in Mathematics (VIASM). He thanks the VIASM for support and hospitality.

\bibliographystyle{amsplain}

\begin{thebibliography}{10}

\bibitem{Adem.etal1990}
A.~Adem, J.~Maginnis, and R.~J. Milgram, \emph{Symmetric invariants and
  cohomology of groups}, Math. Ann. \textbf{287} (1990), no.~3, 391--411.
  \MR{1060683 (91i:55022)}

\bibitem{AK56}
S.~Araki and T.~Kudo, \emph{Topology of {$H_n$}-spaces and {$H$}-squaring
  operations}, Memoirs of the Faculty of Science \textbf{10} (1956), 85--120.

\bibitem{Barratt.Priddy1972}
M.~Barratt and S.~Priddy, \emph{On the homology of non-connected monoids and
  their associated groups}, Comment. Math. Helv. \textbf{47} (1972), 1--14.

\bibitem{Boardman.et.al.1999}
J.~M. Boardman, R.~L. Kramer, and W.~S. Wilson, \emph{The periodic {H}opf ring
  of connective {M}orava {$K$}-theory}, Forum Math. \textbf{11} (1999), no.~6,
  761--767. \MR{1725596 (2000k:55009)}

\bibitem{Brown1962}
E.~H. Brown, Jr., \emph{Cohomology theories}, Ann. of Math. (2) \textbf{75}
  (1962), 467--484. \MR{0138104 (25 \#1551)}

\bibitem{Cho2011}
P.~H. Ch\horn{o}n, \emph{Monomials basis of the {A}raki-{K}udo-{D}yer-{L}ashof
  algebra}, Vietnam J. Math. \textbf{39} (2011), no.~1, 19--29. \MR{2841632
  (2012f:55021)}

\bibitem{Coh-Lad-May76}
F.~R. Cohen, T.~J. Lada, and J.~P. May, \emph{The homology of iterated loop
  spaces}, Lecture Notes in Mathematics, Vol. 533, Springer-Verlag, Berlin,
  1976. \MR{0436146 (55 \#9096)}

\bibitem{Dic11}
L.~E. Dickson, \emph{A fundamental system of invariants of the general modular
  linear group with a solution of the form problem}, Trans. Amer. Math. Soc.
  \textbf{12} (1911), no.~1, pp. 75--98 (English).

\bibitem{Dye-Las62}
E.~Dyer and R.~K. Lashof, \emph{Homology of iterated loop spaces}, Amer. J.
  Math. \textbf{84} (1962), no.~1, pp. 35--88 (English).

\bibitem{Eccles1997}
P.~J. Eccles, P.~R. Turner, and W.~S. Wilson, \emph{On the {H}opf ring for the
  sphere}, Math. Z. \textbf{224} (1997), no.~2, 229--233. \MR{1431194
  (98e:55024)}

\bibitem{Guna-Lannes-Zarati1989}
J.~H. Gunawardena, J.~Lannes, and S.~Zarati, \emph{Cohomologie des groupes
  sym\'etriques et application de {Q}uillen}, Advances in homotopy theory
  ({C}ortona, 1988), London Math. Soc. Lecture Note Ser., vol. 139, Cambridge
  Univ. Press, Cambridge, 1989, pp.~61--68. \MR{1055868 (91d:18013)}

\bibitem{Ha.Lesh2004}
L.~M. H\`a and K.~Lesh, \emph{The cohomology of symmetric groups and the
  {Q}uillen map at odd primes}, J. Pure Appl. Algebra \textbf{190} (2004),
  no.~1-3, 137--153.

\bibitem{Hun-Tur98}
J.~R. Hunton and P.~R. Turner, \emph{Coalgebraic algebra}, J. Pure Appl.
  Algebra \textbf{129} (1998), no.~3, 297--313. \MR{1631257 (99g:16048)}

\bibitem{Kahn.Priddy1978a}
D.~S. Kahn and S.~B. Priddy, \emph{The transfer and stable homotopy theory},
  Math. Proc. Cambridge Philos. Soc. \textbf{83} (1978), no.~1, 103--111.

\bibitem{Kashiwabara1994}
T.~Kashiwabara, \emph{Hopf rings and unstable operations}, J. Pure Appl.
  Algebra \textbf{94} (1994), no.~2, 183--193. \MR{1282839 (95h:55005)}

\bibitem{Kashiwabara1995}
\bysame, \emph{Sur l'anneau de {H}opf {$H\sb *(QS\sp 0;{\bf Z}/2)$}}, C. R.
  Acad. Sci. Paris S\'er. I Math. \textbf{320} (1995), no.~9, 1119--1122.
  \MR{1332622 (96g:55010)}

\bibitem{Kashiwabara2000}
\bysame, \emph{Homological algebra for coalgebraic modules and mod {$p$}
  {$K$}-theory of infinite loop spaces}, $K$-Theory \textbf{21} (2000), no.~4,
  387--417, Special issues dedicated to Daniel Quillen on the occasion of his
  sixtieth birthday, Part V. \MR{1828183 (2002e:55007)}

\bibitem{Kashiwabara2010b}
\bysame, \emph{Coalgebraic tensor product and homology operations}, Algebr.
  Geom. Topol. \textbf{10} (2010), no.~3, 1739--1746. \MR{2683751
  (2011g:16060)}

\bibitem{Kashiwabara2012}
\bysame, \emph{The {H}opf ring for {B}ockstein-nil homology of {$QS^n$}},
  Journal of Pure and Applied Algebra \textbf{216} (2012), no.~2, 267 -- 275.

\bibitem{Kashiwabara.et.al.1996}
T.~Kashiwabara, N.~Strickland, and P.~Turner, \emph{The {M}orava {$K$}-theory
  {H}opf ring for {$BP$}}, Algebraic topology: new trends in localization and
  periodicity ({S}ant {F}eliu de {G}u\'\i xols, 1994), Progr. Math., vol. 136,
  Birkh\"auser, Basel, 1996, pp.~209--222. \MR{1397732 (98c:55006)}

\bibitem{Kechagias1994}
N.~E. Kechagias, \emph{Extended {D}yer-{L}ashof algebras and modular
  coinvariants}, manuscripta mathematica \textbf{84} (1994), no.~1, 261--290.

\bibitem{Kechagias2004}
\bysame, \emph{Adem relations in the {D}yer-{L}ashof algebra and modular
  invariants}, Algebraic \& Geometric Topology \textbf{4} (2004), 219--241.

\bibitem{Kramer1991}
R.~L. Kramer, \emph{The periodic {H}opf ring of connective {M}orava
  {K}-theory}, ProQuest LLC, Ann Arbor, MI, 1991, Thesis (Ph.D.)--The Johns
  Hopkins University. \MR{2685728}

\bibitem{Li1996}
Y.~Li, \emph{On the {H}opf ring for the sphere}, ProQuest LLC, Ann Arbor, MI,
  1996, Thesis (Ph.D.)--The Johns Hopkins University. \MR{2694236}

\bibitem{Mad75}
I.~Madsen, \emph{On the action of the {D}yer-{L}ashof algebra in
  {$H\sb{\ast}(G)$}}, Pacific J. Math. \textbf{60} (1975), no.~1, 235--275.
  \MR{0388392 (52 \#9228)}

\bibitem{Madsen.Milgram1979}
I.~Madsen and R.~J. Milgram, \emph{The classifying spaces for surgery and
  cobordism of manifolds}, Annals of Mathematics Studies, vol.~92, Princeton
  University Press, Princeton, N.J., 1979. \MR{548575 (81b:57014)}

\bibitem{May1971}
J.~P. May, \emph{Homology operations on infinite loop spaces}, Algebraic
  topology ({P}roc. {S}ympos. {P}ure {M}ath., {V}ol. {XXII}, {U}niv.
  {W}isconsin, {M}adison, {W}is., 1970), Amer. Math. Soc., Providence, R.I.,
  1971, pp.~171--185. \MR{0319195 (47 \#7740)}

\bibitem{Morton.Strickland2002}
D.~S.~C. Morton and N.~Strickland, \emph{The {H}opf rings for {$KO$} and
  {$KU$}}, Journal of Pure and Applied Algebra \textbf{166} (2002), no.~3, 247
  -- 265.

\bibitem{Mui75}
H.~M\`ui, \emph{Modular invariant theory and cohomology algebras of symmetric
  groups}, J. Fac. Sci. Univ. Tokyo Sect. IA Math. \textbf{22} (1975), no.~3,
  319--369. \MR{0422451 (54 \#10440)}

\bibitem{Priddy1971}
S.~B. Priddy, \emph{On {$\Omega^\infty S^\infty$} and the infinite symmetric
  group}, Algebraic topology (Proc. Sympos. Pure Math.,Vol. XXII,Univ.
  Wisconsin,Madison,Wis.,1970), Amer. Math. Soc., Providence,R.I., 1971,
  pp.~217--220.

\bibitem{Priddy1973c}
\bysame, \emph{Transfer, symmetric groups, and stable homotopy theory},
  Algebraic {$K$}-theory, {I}: {H}igher {$K$}-theories ({P}roc. {C}onf.,
  {B}attelle {M}emorial {I}nst., {S}eattle, {W}ash., 1972), Springer, Berlin,
  1973, pp.~244--255. Lecture Notes in Math. Vol. 341. \MR{0350727 (50 \#3219)}

\bibitem{Quillen1969b}
D.~Quillen, \emph{Rational homotopy theory}, Ann. of Math. (2) \textbf{90}
  (1969), 205--295.

\bibitem{Quillen1971.ann}
\bysame, \emph{The spectrum of an equivariant cohomology ring. {I}, {II}}, Ann.
  of Math. (2) \textbf{94} (1971), 549--572; ibid. (2) 94 (1971), 573--602.
  \MR{0298694 (45 \#7743)}

\bibitem{Ravenel.Wilson1977}
D.~C. Ravenel and W.~S. Wilson, \emph{{The Hopf ring for complex cobordism}},
  Journal of Pure and Applied Algebra \textbf{9} (1977), 241--280.

\bibitem{Tur97}
P.~R. Turner, \emph{Dickson coinvariants and the homology of {$QS\sp 0$}},
  Math. Z. \textbf{224} (1997), no.~2, 209--228. \MR{1431193 (98e:55023)}

\bibitem{Wilson1984}
W.~S. Wilson, \emph{The {H}opf ring for {M}orava {$K$}-theory}, Publ. Res.
  Inst. Math. Sci. \textbf{20} (1984), no.~5, 1025--1036. \MR{764345
  (86c:55008)}

\bibitem{wilson2000}
\bysame, \emph{{H}opf rings in algebraic topology}, Expo. Math. \textbf{18}
  (2000), no.~5, 369--388. \MR{1802339 (2001j:57050)}

\end{thebibliography}
\providecommand{\bysame}{\leavevmode\hbox to3em{\hrulefill}\thinspace}
\providecommand{\MR}{\relax\ifhmode\unskip\space\fi MR }
\providecommand{\MRhref}[2]{%
  \href{http://www.ams.org/mathscinet-getitem?mr=#1}{#2}
}
\providecommand{\href}[2]{#2}

 \end{document}